\patchcmd{\part}{\sfdefault}{}{}
\patchcmd{\specialsection}{\sfdefault}{}{}
\patchcmd{\section}{\sfdefault\scshape}{}{}
\patchcmd{\subsection}{\sfdefault}{}{}
\patchcmd{\subsubsection}{\sfdefault}{}{}
\theoremstyle{plain}
\newtheorem{theorem}{Theorem}[section]
\newtheorem{lemma}[theorem]{Lemma}
\newtheorem{cor}[theorem]{Corollary}
\newtheorem{prop}[theorem]{Proposition}
\newtheorem{qtn}[theorem]{Question}
\newtheorem*{metaconj}{Metaconjecture}
\theoremstyle{definition}
\newtheorem{defn}[theorem]{Definition}
\newtheorem{remark}[theorem]{Remark}
\DeclareMathOperator{\Aut}{\mathrm{Aut}}
\DeclareMathOperator{\Mod}{\mathrm{Mod}}
\DeclareMathOperator{\bsd}{\mathrm{bsd}}
\DeclareMathOperator{\Stab}{\mathrm{Stab}}
\DeclareMathOperator{\Image}{\mathrm{Im}}
\DeclareMathOperator{\Sym}{\mathrm{Sym}}
\DeclareMathOperator{\Comm}{\mathrm{Comm}}
\DeclareMathOperator{\hdist}{\mathrm{hdist}}
\begin{document}

\title{The curve complex as a coset intersection complex}
\author[]{Haoyang He and Eduardo Martínez-Pedroza}  
\address{Department of Mathematics and Statistics, Memorial University of Newfoundland, St. John's, NL, A1C 5S7, Canada}
\email{haoyangh@mun.ca, emartinezped@mun.ca}
\date{\today}

\begin{abstract}
	We show that there is a collection of subgroups of the mapping class group of a surface such that the associated coset intersection complex is quasi-isometric and homotopy equivalent to the curve complex. Moreover, we prove that these two complexes are combinatorially equivalent in the sense that one can be obtained from the other via taking a nerve. As an application, we prove that the automorphism group of this coset intersection complex is the extended mapping class group, a result in the spirit of for Ivanov's metaconjecture. 
\end{abstract}

\maketitle

\section{Introduction}

A \emph{group pair} $(G,\mathcal{A})$ consists of a finitely generated group $G$ and a finite collection $\mathcal{A}$ of infinite subgroups. We write $G/\mathcal{A}=\{gA\mid g\in G,A\in\mathcal{A}\}$. The following simplicial complex associated to a group pair $(G,\mathcal{A})$ was introduced by Abbott\textendash Martínez-Pedroza~\cite{MR5034311}. 

\begin{defn}[{\cite[Definition 4.1]{MR5034311}}]
	\label{def_cic}
	The \emph{coset intersection complex} of $(G,\mathcal{A})$, denoted by $\mathcal{K}(G,\mathcal{A})$, is the simplicial complex whose vertex set is $G/\mathcal{A}$, and $\{g_0A_0,\cdots,g_kA_k\}\subseteq G/\mathcal{A}$ is a simplex if $\bigcap_{i=0}^kg_iA_ig_i^{-1}$ is infinite.
\end{defn}

It is worth noting that a similar simplicial complex, the coset complex, was introduced by Abels\textendash Holz~\cite{MR1244917} and was studied, for instance, by Brück~\cite{MR4411324} in the context of the outer automorphism group of a right-angled Artin group. 

This article concerns certain coset intersection complexes of the mapping class group of a surface. We are motivated by the following two results. Together with results in this article, they suggest that it may be fruitful to study the coset intersection complex for other class of groups and their associated complexes that are ubiquitous in geometric group theory. 

\begin{theorem}[{\cite[Theorem 6.1]{MR5034311}}]
	\label{thm_AMP256.1}
	Let $\Gamma$ be a finite triangle-free simplicial graph such that every vertex has valence at least two. Let $\mathcal{B}$ be the collection of maximal abelian subgroups of the right-angled Artin group $A(\Gamma)$. Then $\mathcal{K}(A(\Gamma),\mathcal{B})$ is quasi-isometric to the extension graph $E(\Gamma)$.
\end{theorem}

The extension graph was used by Kim\textendash Koberda \cite{MR3039768} to study the embedability between right-angled Artin groups and it is quasi-isometric to a tree \cite[Lemma 3.5(7)]{MR3039768}. 

\begin{theorem}[{\cite[Proposition 4.8]{abbott2025homotopytypescomplexeshyperplanes}}]
	\label{thm_AGMP254.8}
	Let $\Gamma$ be a finite connected simplicial graph. Let $\mathcal{G}$ be a collection of infinite subgroups indexed by vertices of $\Gamma$. Let $\mathcal{B}$ be the collection of subgroups of the graph product $\Gamma\mathcal{G}$ generated by cliques of $\Gamma$. Then $\mathcal{K}(\Gamma\mathcal{G},\mathcal{B})$ is quasi-isometric and homotopy equivalent to the crossing complex $\mathrm{Cross}^{\Delta}(\Gamma,\mathcal{G})$.  
\end{theorem}

We refer to~\cite[Section 3]{abbott2025homotopytypescomplexeshyperplanes} for the definition of the crossing complex. The one-skeleton of the crossing complex, the crossing graph, was used by Genevois to obtain useful results in relation to the graph product \cite[Section 8]{genevois2017cubicallikegeometryquasimediangraphs}. Abbott\textendash Genevois\textendash Martínez-Pedroza used the crossing complex to establish a necessary condition for the quasi-isometric rigidity of right-angled Artin groups \cite[Theorem 1.1]{abbott2025homotopytypescomplexeshyperplanes}.

We now recall definitions of the mapping class group and the curve complex in order to state our results. Throughout this article, $S$ is a closed, connected, oriented surface of finite type with genus at least two. The \emph{mapping class group} of $S$, denoted by $\Mod(S)$, is the group of isotopy classes of orientation-preserving homeomorphisms $S\to S$. The \emph{extended mapping class group} of $S$, denoted by $\Mod^{\pm}(S)$, is the group of isotopy classes of all homeomorphisms $S\to S$. The \emph{curve complex} $\mathcal{C}(S)$ of $S$, as defined by Harvey \cite[page 246]{MR624817}, is the simplicial complex whose vertices are isotopy classes of essential simple closed curves in $S$ and whose simplices are sets of isotopy classes that can be realised disjointly. The curve complex is a powerful tool in low-dimensional topology. For instance, it is a key ingredient in establishing the quasi-isometric rigidity of $\Mod^{\pm}(S)$ \cite{MR2928983}, in finding the curvature and rank of the Teichmüller space \cite{MR2197066}, and in proving the ending lamination conjecture \cite{MR2630036,MR2925381}.

One of the results in this article, in a similar fashion to Theorems~\ref{thm_AMP256.1} and~\ref{thm_AGMP254.8}, is as follows. Recall that the action of $\Mod(S)$ on the set $\mathcal{P}=\{P\mid P\text{ is a maximal simplex in }\mathcal{C}(S)\}$ has finitely many orbits, say $n$. Let $P_1,\cdots,P_n$ be representatives of the orbits of this action. Denote $T_x$ as the \emph{Dehn twist} about the isotopy class $x$ of simple closed curves in $S$. Let $H_i=\langle T_x\mid x\in P_i\rangle$ for each $1\le i\le n$. Note that each $H_i$ is a free abelian group of $\Mod(S)$ of maximal rank \cite[Theorem 1]{MR726319}. Let $\mathcal{H}=\{H_i\mid 1\le i\le n\}$. Let $\mathcal{T}=\{\Stab(P_i)\mid1\le i\le n\}$, where $\Stab(P_i)$ is the subgroup of $\Mod(S)$ that leaves the simplex $P_i$ invariant. 

\begin{theorem}
	\label{thm_qihe}
	The curve complex $\mathcal{C}(S)$ is quasi-isometric and homotopy equivalent to both $\mathcal{K}(\Mod(S),\mathcal{T})$ and $\mathcal{K}(\Mod(S),\mathcal{H})$. 
\end{theorem}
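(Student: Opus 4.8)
The plan is to route everything through $\mathcal{N}(\mathcal{C}(S))$, which Theorem~\ref{thm_combequi}(1) identifies with $\mathcal{K}(\Mod(S),\mathcal{T})$, together with an explicit combinatorial model of $\mathcal{K}(\Mod(S),\mathcal{H})$. \emph{Combinatorics of $\mathcal{K}(\Mod(S),\mathcal{H})$.} First I would record that, since the curves of $P_i$ are pairwise disjoint, the Dehn twists about them commute and generate the free abelian \emph{twist subgroup} $H_i$ of the multicurve $P_i$, of rank $|P_i|$, and that $gH_ig^{-1}=\langle T_c\mid c\in g\cdot P_i\rangle$ because $gT_cg^{-1}=T_{g\cdot c}$. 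The crucial algebraic input is that a nontrivial multitwist determines its supporting multicurve (its canonical reduction system); from this one gets, for any pants decompositions $Q_0,\dots,Q_k$, that $\bigcap_j\langle T_c\mid c\in Q_j\rangle=\langle T_c\mid c\in\bigcap_jQ_j\rangle$, which is infinite exactly when $\bigcap_jQ_j\neq\emptyset$. Hence $\{g_0H_{i_0},\dots,g_kH_{i_k}\}$ spans a simplex of $\mathcal{K}(\Mod(S),\mathcal{H})$ if and only if the pants decompositions $g_0\cdot P_{i_0},\dots,g_k\cdot P_{i_k}$ share a common curve. In particular $p\colon\mathcal{K}(\Mod(S),\mathcal{H})\to\mathcal{N}(\mathcal{C}(S))$, $gH_i\mapsto g\cdot P_i$, is a well-defined, surjective, simplicial map whose fibres are cliques (and in fact finite, as $\Stab(P_i)$ is commensurable with $H_i$, though I will not need finiteness).

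\emph{Homotopy equivalences.} I would invoke the nerve lemma for closed covers of a CW complex by subcomplexes all of whose nonempty finite intersections are contractible. Covering $\mathcal{C}(S)$ by its maximal simplices---each a simplex, with nonempty intersections equal to faces---the lemma gives $\mathcal{C}(S)\simeq\mathcal{N}(\mathcal{C}(S))\cong\mathcal{K}(\Mod(S),\mathcal{T})$. For the other complex, given an isotopy class $c$ of essential simple closed curves let $W_c$ be the full subcomplex of $\mathcal{K}(\Mod(S),\mathcal{H})$ on the vertices $gH_i$ with $c\in g\cdot P_i$. By the simplex description above, any finite set of such vertices spans a simplex (their pants decompositions all contain $c$), so $W_c$ is a (possibly infinite-dimensional) simplex, as is each nonempty $W_{c_0}\cap\cdots\cap W_{c_m}$; and the $W_c$ cover $\mathcal{K}(\Mod(S),\mathcal{H})$. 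The lemma then yields a homotopy equivalence with the nerve of $\{W_c\}_c$, and $\{c_0,\dots,c_m\}$ is a simplex of that nerve iff the $c_l$ lie in a common pants decomposition iff they form a multicurve iff they span a simplex of $\mathcal{C}(S)$; so the nerve is $\mathcal{C}(S)$ itself, and $\mathcal{K}(\Mod(S),\mathcal{H})\simeq\mathcal{C}(S)$.

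\emph{Quasi-isometries.} I would fix a map $\phi$ taking each curve $x$ to a pants decomposition containing it. Since two disjoint curves extend to a common pants decomposition, $\phi$ is coarsely Lipschitz; since consecutive pants decompositions along an edge-path in $\mathcal{N}(\mathcal{C}(S))$ share a curve, reading off such curves gives $d_{\mathcal{C}(S)}(x,y)\le d_{\mathcal{N}(\mathcal{C}(S))}(\phi x,\phi y)+1$; and $\phi$ is coarsely surjective because every pants decomposition shares a curve with the $\phi$-image of any of its curves. So $\phi$ is a quasi-isometry, and $\mathcal{C}(S)$ is quasi-isometric to $\mathcal{N}(\mathcal{C}(S))\cong\mathcal{K}(\Mod(S),\mathcal{T})$. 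For $\mathcal{H}$, the map $p$ is $1$-Lipschitz and surjective, and lifting an edge-path in $\mathcal{N}(\mathcal{C}(S))$ vertex by vertex---possible by the simplex description, since the relevant intersections of twist subgroups are infinite---and then taking one step inside a fibre clique gives $d_{\mathcal{K}(\Mod(S),\mathcal{H})}(u,v)\le d_{\mathcal{N}(\mathcal{C}(S))}(pu,pv)+1$; hence $p$ is a quasi-isometry and $\mathcal{K}(\Mod(S),\mathcal{H})$ is quasi-isometric to $\mathcal{C}(S)$.

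I expect the main obstacle to be the combinatorial description of $\mathcal{K}(\Mod(S),\mathcal{H})$: it relies on the uniqueness of the canonical reduction system of a multitwist, so that the twist subgroups of two pants decompositions meet precisely in the twist subgroup of their shared curves. A secondary technical point is ensuring the nerve lemma is used in a form valid for these infinite-dimensional, non-locally-finite complexes and their covers by infinite simplices---namely the version for closed covers of CW complexes by subcomplexes with contractible nonempty intersections.
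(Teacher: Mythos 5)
Your argument is correct, and for the $\mathcal{K}(\Mod(S),\mathcal{T})$ half it is essentially the paper's: the homotopy equivalence is the same nerve-theorem argument as Corollary~\ref{cor_cshmkt}, and your map $\phi$ (curve $\mapsto$ a pants decomposition containing it) is a quasi-inverse of the map used in Proposition~\ref{prop_csqik} (coset $\mapsto$ barycentre of the corresponding maximal simplex). The genuine divergence is in the $\mathcal{K}(\Mod(S),\mathcal{H})$ half. The paper (Proposition~\ref{csqih}) observes that $H_i$ is finite-index in $\Stab(P_i)=\Comm(H_i)$, so $(\Mod(S),\mathcal{H})$ and $(\Mod(S),\mathcal{T})$ are virtually isomorphic, hence quasi-isometric group pairs, and then invokes the black-box invariance result (Theorem~\ref{thm_qicicqihe} of \cite{abbott2025quasiisometryinvariancecosetintersection}) to transfer both the quasi-isometry and the homotopy equivalence. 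You instead work directly on $\mathcal{K}(\Mod(S),\mathcal{H})$: your combinatorial description of its simplices is exactly the equivalence of (2) and (3) in Lemma~\ref{lem_comgen} (resting, as you say, on the fact that a multitwist determines its multicurve, Lemma~\ref{lem_3.17FM12}), your cover by the infinite simplices $W_c$ feeds into the same nerve theorem (Theorem~\ref{thm_NerveThm}) the paper already applies to an infinite cover with infinite-dimensional nerve, and your projection $p$ with clique fibres gives the quasi-isometry by explicit path-lifting. Your route is self-contained and makes the homotopy equivalence and quasi-isometry explicit, at the cost of redoing by hand a special case of the invariance machinery; the paper's route is shorter given that machinery and applies verbatim to any collection of subgroups commensurable to the $\Stab(P_i)$. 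Both are sound.
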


We also establish a combinatorial equivalence between $\mathcal{C}(S)$ and $\mathcal{K}(\Mod(S),\mathcal{T})$. Recall that, given a collection $\mathcal{U}$ of non-empty subsets of a set, the \emph{nerve} of $\mathcal{U}$ is the simplicial complex whose vertex set is $\mathcal{U}$ and such that $\{U_0,\cdots,U_k\}\subseteq\mathcal{U}$ is a simplex if $\bigcap_{i=0}^kU_i$ is non-empty. For a simplicial complex $X$, denote $\mathcal{N}(X)$ as the nerve of the collection of maximal simplices of $X$. The complexes $\mathcal{C}(S)$ and $\mathcal{K}(\Mod(S),\mathcal{T})$ are combinatorially equivalent in the following sense.

\begin{theorem}
	\label{thm_combequi}
	\begin{enumerate}
		\item[]
		\item $\mathcal{N}(\mathcal{C}(S))$ is isomorphic to $\mathcal{K}(\Mod(S),\mathcal{T})$.
		\item $\mathcal{N}(\mathcal{K}(\Mod(S),\mathcal{T}))$ is isomorphic to $\mathcal{C}(S)$. 
	\end{enumerate}
\end{theorem}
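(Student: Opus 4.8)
The plan is to prove (1) by writing down an explicit simplicial isomorphism, and then to deduce (2) by a formal ``double nerve'' argument together with two standard facts about $\mathcal{C}(S)$. For (1), write $A_i=\Stab(P_i)$; each $A_i$ is infinite, as it contains the free abelian group of rank $3g-3$ generated by the Dehn twists about the curves of $P_i$, so $\mathcal{T}$ is an admissible collection. The assignment $gA_i\mapsto g\cdot P_i$ is well defined because $\Stab(P_i)$ is precisely the setwise stabiliser of $P_i$, and it is a bijection $\Mod(S)/\mathcal{T}\to\mathcal{P}$: surjectivity holds because the $P_i$ meet every orbit, and injectivity (including across distinct indices) because the $P_i$ lie in distinct orbits. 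Under this bijection a vertex $gA_i$ of $\mathcal{K}(\Mod(S),\mathcal{T})$ becomes a maximal simplex $Q=g\cdot P_i$ of $\mathcal{C}(S)$, that is, a vertex of $\mathcal{N}(\mathcal{C}(S))$, and moreover $gA_ig^{-1}=\Stab(Q)$. Hence (1) reduces to the following claim: for maximal simplices $Q_0,\dots,Q_k$ of $\mathcal{C}(S)$, the group $\bigcap_j\Stab(Q_j)$ is infinite if and only if $\bigcap_j Q_j\neq\emptyset$. The implication ``$\Leftarrow$'' is immediate: if $c$ is a common vertex, then the Dehn twist $T_c$ fixes each $Q_j$ setwise (every other curve of $Q_j$ is disjoint from $c$), and $T_c$ has infinite order, so $\langle T_c\rangle\le\bigcap_j\Stab(Q_j)$.

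The implication ``$\Rightarrow$'' is where the real work lies, and I expect it to be the main obstacle. Recall that for a pants decomposition $Q$ the setwise stabiliser $\Stab(Q)$ contains the multitwist subgroup $\Lambda_Q=\langle T_c : c\in Q\rangle\cong\mathbb{Z}^{3g-3}$ with finite index. Thus if $H:=\bigcap_j\Stab(Q_j)$ is infinite, then $\bigcap_j\Lambda_{Q_j}$ still has finite index in $H$, hence is infinite, so it contains a nontrivial element $g$; that is, $g$ is simultaneously a nontrivial multitwist along each $Q_j$, say $g=\prod_{c\in Q_j}T_c^{a^{(j)}_c}$ with not every $a^{(j)}_c$ equal to zero. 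The crucial input is that the support $\{c\in Q_j: a^{(j)}_c\neq 0\}$ of this multitwist is an invariant of $g$ alone, not of the chosen $Q_j$: it equals the canonical reduction system of $g$, since the complementary pieces of a pants decomposition are pairs of pants and so contribute no pseudo-Anosov components. Therefore this nonempty set of curves is contained in $Q_j$ for every $j$, giving $\emptyset\neq\bigcap_j Q_j$, and (1) follows once vertices and simplices are matched up as above.

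For (2), using the isomorphism of (1) it suffices to prove $\mathcal{N}(\mathcal{N}(\mathcal{C}(S)))\cong\mathcal{C}(S)$, which is purely combinatorial once one has two facts about $\mathcal{C}(S)$. For a vertex $c$ let $\mathcal{S}_c$ be the set of maximal simplices of $\mathcal{C}(S)$ containing $c$. First, I would show $\bigcap_{Q\in\mathcal{S}_c}Q=\{c\}$, by a change-of-coordinates argument: given $c'\neq c$, if $c'$ meets $c$ then $c'$ lies in no pants decomposition through $c$, while if $c'$ is disjoint from $c$ one finds a curve $c''$ disjoint from $c$ but intersecting $c'$ (using that no component of $S\smallsetminus c$ is a pair of pants, so every essential curve there is crossed by another) and extends $\{c,c''\}$ to a pants decomposition avoiding $c'$. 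It follows that the maximal simplices of $\mathcal{N}(\mathcal{C}(S))$ are exactly the sets $\mathcal{S}_c$, that they are pairwise distinct, and hence that $c\mapsto\mathcal{S}_c$ is a bijection on vertices. Second, every simplex of $\mathcal{C}(S)$ extends to a maximal one, so a finite set $\{c_0,\dots,c_k\}$ of vertices lies in a common pants decomposition if and only if it is a simplex of $\mathcal{C}(S)$; since $\{\mathcal{S}_{c_0},\dots,\mathcal{S}_{c_k}\}$ is a simplex of $\mathcal{N}(\mathcal{N}(\mathcal{C}(S)))$ precisely when $\bigcap_j\mathcal{S}_{c_j}\neq\emptyset$, i.e.\ exactly when the $c_j$ lie in a common pants decomposition, the bijection $c\mapsto\mathcal{S}_c$ is a simplicial isomorphism $\mathcal{C}(S)\to\mathcal{N}(\mathcal{N}(\mathcal{C}(S)))$. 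Composing it with the isomorphism induced by (1) yields $\mathcal{C}(S)\cong\mathcal{N}(\mathcal{K}(\Mod(S),\mathcal{T}))$.
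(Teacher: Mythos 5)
Your proposal is correct and follows essentially the same route as the paper: the same vertex bijection $g\Stab(P_i)\mapsto g(P_i)$, the same reduction of simplices to the equivalence ``$\bigcap_j\Stab(Q_j)$ infinite iff $\bigcap_j Q_j\neq\emptyset$'' via the finite-index multitwist subgroups and uniqueness of multitwist decompositions (you cite canonical reduction systems where the paper cites the Farb--Margalit lemma on products of twist powers, but it is the same fact), and for (2) the same double-nerve argument hinging on $\bigcap_{Q\in\mathcal{S}_c}Q=\{c\}$ via the change-of-coordinates lemma, which the paper merely packages as a general statement about simplicial complexes satisfying $\Sigma_x\subseteq\Sigma_y\Rightarrow x=y$. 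The only step you elide is the short finite-dimensionality argument showing that every maximal simplex of $\mathcal{N}(\mathcal{C}(S))$ actually has a common vertex (the paper's Lemma on maximal simplices of the nerve), which is routine to supply.
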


The following corollary is an analogous statement for groups quasi-isometric to $\Mod(S)$, following quasi-isometric rigidity of $\Mod^{\pm}(S)$ \cite{MR2928983,hamenstaedt2007geometrymappingclassgroups} and results in \cite{MR5034311}.

\begin{cor}
	\label{thm_gqimcg}
	Let $H$ be a finitely generated group quasi-isometric to $\Mod(S)$. Then there exists a finite collection $\mathcal{R}$ of subgroups of $H$ such that the following hold.
	\begin{enumerate}
		\item The curve complex $\mathcal{C}(S)$ is isomorphic to  $\mathcal{N}(\mathcal{K}(H,\mathcal{R}))$. 
		\item The coset intersection complex $\mathcal{K}(H,\mathcal{R})$ is isomorphic to  $\mathcal{N}(\mathcal{C}(S))$.
	\end{enumerate}
\end{cor}

We are also interested in the automorphism group of $\mathcal{K}(\Mod(S),\mathcal{T})$. The motivation is from the following important result, which is due, independently, to Ivanov \cite{MR1460387} and Luo \cite{MR1722024} (cases of surfaces with genera zero and one were addressed in \cite{MR1696431,MR1722024}). 

\begin{theorem}[Ivanov, Luo]
	\label{thm_autcs}
	Let $S$ be a surface with genus at least three. Then $\Aut(\mathcal{C}(S))$ is isomorphic to $\Mod^{\pm}(S)$. 
\end{theorem}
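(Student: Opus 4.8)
The plan is to study the natural homomorphism $\rho\colon\Mod^{\pm}(S)\to\Aut(\mathcal{C}(S))$ that sends a homeomorphism $f$ to the simplicial automorphism $[c]\mapsto[f(c)]$; this is well defined since homeomorphisms preserve essentiality, disjointness, and isotopy of simple closed curves. Injectivity of $\rho$ is the Alexander method: a homeomorphism fixing the isotopy classes of a filling system of curves is isotopic to the identity, and for genus at least three there is no hyperelliptic obstruction, so $\ker\rho=1$. Hence the content of the theorem is that $\rho$ is surjective, i.e.\ that every automorphism of $\mathcal{C}(S)$ is \emph{geometric}. I would prove this in three steps.

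First I would show that an arbitrary $\phi\in\Aut(\mathcal{C}(S))$ preserves topological type. Because $\dim\mathcal{C}(S)=3g-4$ and the maximal simplices of $\mathcal{C}(S)$ are exactly the pants decompositions of $S$, the automorphism $\phi$ permutes pants decompositions. The link of a vertex $v$ is canonically the curve complex of the surface $S_v$ obtained by cutting $S$ along $v$ — a join of the curve complexes of its components when $v$ is separating — so $\phi$ induces isomorphisms between these links. An induction on the complexity of $S$ then yields a purely combinatorial criterion detecting whether a vertex is separating or non-separating, the homeomorphism type of $S_v$, and more generally the topological type of every simplex; consequently $\phi$ carries curves and configurations to topologically equivalent ones.

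Next I would normalise $\phi$ by a mapping class and invoke rigidity. Fix a configuration $\Gamma$ of simple closed curves that is simultaneously an \emph{Alexander system}, so that a homeomorphism of $S$ is determined up to isotopy by its action on $\Gamma$, and \emph{combinatorially rigid}, meaning that every essential simple closed curve is the unique isotopy class disjoint from a prescribed subcollection of $\Gamma$. By the previous step $\phi(\Gamma)$ has the same topological type as $\Gamma$, and since $\Mod^{\pm}(S)$ acts transitively on configurations of a given type, there is $h\in\Mod^{\pm}(S)$ with $\rho(h)$ and $\phi$ agreeing on every vertex of $\Gamma$. Then $\psi=\rho(h)^{-1}\circ\phi$ fixes $\Gamma$ pointwise, so combinatorial rigidity forces $\psi$ to fix every vertex of $\mathcal{C}(S)$; thus $\psi$ is the identity, $\phi=\rho(h)$, and $\rho$ is surjective. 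Together with $\ker\rho=1$ this gives $\Aut(\mathcal{C}(S))\cong\Mod^{\pm}(S)$.

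The main obstacle is the first step: extracting from the bare incidence structure of $\mathcal{C}(S)$ a combinatorial invariant that separates separating from non-separating curves and recovers the genus and boundary data of the complementary pieces. This is the delicate heart of Ivanov's argument, and it is precisely what forces the genus restriction — in genus two the hyperelliptic involution lies in $\ker\rho$ and several low-complexity links degenerate, so the induction must be seeded with care. A secondary difficulty is producing a single $\Gamma$ that is at once an Alexander system and combinatorially rigid for every surface of genus at least three; this relies on the known generating configurations of $\Mod(S)$ together with the fact that, in high enough complexity, an isotopy class is pinned down by its pattern of disjointness against a sufficiently rich finite family of curves.
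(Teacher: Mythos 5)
This theorem is not proved in the paper: it is quoted as a known result of Ivanov and Luo (with the genus-two failure noted separately), so there is no internal proof to compare against. Measured against the published proofs, your outline is the correct classical strategy: the homomorphism $\rho\colon\Mod^{\pm}(S)\to\Aut(\mathcal{C}(S))$, injectivity via the Alexander method (with the hyperelliptic involution excluded for $g\ge3$), and surjectivity by showing automorphisms preserve topological type and then rigidifying against a filling configuration. As a roadmap this matches Ivanov's argument and its modern expositions.

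However, as a proof it has a genuine gap, and it is the one you yourself identify: essentially all of the content of the theorem lives in the two steps you only name. First, the claim that the incidence structure of $\mathcal{C}(S)$ detects the topological type of a vertex is not a routine induction; it requires specific combinatorial characterisations (e.g.\ distinguishing separating from non-separating curves by whether the link is a join, recognising intersection number one or ``sharing a pair of pants'' purely simplicially), and without exhibiting these criteria the first step is an assertion, not an argument. A related technical point: the link of a vertex $v$ is not literally $\mathcal{C}(S_v)$, since the boundary curves of the cut surface are inessential or become identified, so the induction has to be set up with care at low complexity. Second, you assume the existence of a single configuration $\Gamma$ that is simultaneously an Alexander system and ``combinatorially rigid'' in your sense (every vertex of $\mathcal{C}(S)$ pinned down by its disjointness pattern with $\Gamma$); this is plausible and is in the spirit of what Ivanov does, but it is a nontrivial construction that must be carried out, and the surjectivity argument collapses without it. In short: right approach, but the proposal defers exactly the two lemmas that constitute the proof.
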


If $S$ is the genus two surface, this theorem does not hold since the hyperelliptic involution fixes every vertex of $\mathcal{C}(S)$. There are a number of complexes associated to a surface (possibly with boundary or punctures) that has automorphism group $\Mod^{\pm}(S)$. These include the pants complex \cite{MR2040283}, the complex of separating curves \cite{MR2119299,MR2793105}, the complex of non-separating curves \cite{MR2214789}, the Schmutz graph of nonseparating curves \cite{MR1781329}, the arc complex \cite{MR2681579}, the arc and curve complex \cite{MR2609303}, the Torelli complex \cite{MR2793105}, the Hatcher\textendash Thurston complex \cite{MR2365859}, the ideal triangulation graph \cite{MR3025746}, the truncated complex of domains \cite{MR2952767}, and the polygonalisation complex \cite{MR3968817}. The complex of regions studied by Brendle\textendash Margalit \cite{MR4013739} covers many of aforementioned complexes, and the work of Disarlo\textendash Koberda\textendash de la Nuez González \cite{disarlo2023modeltheorycurvegraph} interprets the metaconjecture from a model-theoretic perspective. Many proofs of these results appeal to Theorem~\ref{thm_autcs}. In response to (some of) these results, Ivanov proposed the following metaconjecture \cite[Section 6]{MR2264532}.

\begin{metaconj}
	Every object that is naturally associated to a surface $S$ and with a sufficiently rich structure has automorphism group isomorphic to $\Mod^{\pm}(S)$. Moreover, this can be proved by a reduction to Theorem~\ref{thm_autcs}. 
\end{metaconj}

Motivated by this metaconjecture, we prove the following. 

\begin{theorem}
	\label{thm_autksts}
	Let $S$ be a surface with genus at least three. Then $\Aut(\mathcal{K}(\Mod(S),\mathcal{T}))$ is isomorphic to $\Mod^{\pm}(S)$. Consequently, for every finitely generated group $H$ quasi-isometric to $\Mod(S)$, there is a finite collection $\mathcal{Q}$ of subgroups of $H$ such that $\Aut(\mathcal{K}(H,\mathcal{Q}))$ is isomorphic to $\Mod^{\pm}(S)$.
\end{theorem}

Theorem~\ref{thm_autksts} follows from Theorems~\ref{thm_combequi}(1),~\ref{thm_autcs}, Corollary~\ref{thm_gqimcg}, and the following theorem. Note that there are flag simplicial complexes $X$ such that the natural map $\Aut(X)\to \Aut(\mathcal{N}(X))$ is not an isomorphism, but this is not the case for the curve complex. 

\begin{theorem}
	\label{thm_metacicmodsT}
	$\Aut(\mathcal{C}(S))$ is isomorphic to $\Aut(\mathcal{N}(\mathcal{C}(S)))$.
\end{theorem}

In fact, we will show a more general result, see Theorem~\ref{thm_assumpautxiso}. In view of Ivanov's metaconjecture and Theorem~\ref{thm_autksts}, the following question naturally arises.

\begin{qtn}
	Is $\Aut(\mathcal{K}(\Mod(S),\mathcal{H}))$ isomorphic to $\Mod^{\pm}(S)$? More generally, for a finite collection $\mathcal{A}$ of infinite index subgroups of $\Mod(S)$, when is $\Aut(\mathcal{K}(\Mod(S),\mathcal{A}))$ isomorphic to $\Mod^{\pm}(S)$? 
\end{qtn}

\subsection*{Organisation} 

We prove Theorem~\ref{thm_combequi}(1) in Section~\ref{sec_ncskst}. More general versions of Theorems~\ref{thm_combequi}(2) and ~\ref{thm_metacicmodsT} are established in Section~\ref{sec_autx}. Corollary~\ref{thm_gqimcg} is proved in Section~\ref{sec_gqimcgs}, and Theorem~\ref{thm_qihe} is shown in Section~\ref{sec_qihe}. We conclude by constructing a simplicial embedding from $\mathcal{C}(S)$ to another coset intersection complex of $\Mod(S)$ in Section~\ref{sec_ccsubcic}. 

\subsection*{Acknowledgements} 

We would like to thank Saul Schleimer for his suggestion that streamlines the proof of Corollary~\ref{thm_gqimcg}, Benjamin Brück for bringing the coset complex to our attention, and Robert Tang for pointing out a reference. We would also like to thank Russ Woodroofe for comments. HH is partially supported by the School of Graduate Studies at Memorial University of Newfoundland. EMP acknowledges funding by the Natural Sciences and Engineering Research Council (NSERC) of Canada.

\section{The nerve of the curve complex}
\label{sec_ncskst}

In this section, we prove Theorem~\ref{thm_combequi}(1). Recall the following definition. 

\begin{defn}
	An \emph{(abstract) simplicial complex} $X$ is a set whose elements are non-empty finite subsets, and such that if $\emptyset\neq\tau\subseteq\sigma\in X$ then $\tau\in X$. Elements of $X$ are called \emph{simplices} of $X$, and elements of $\bigcup X$ are called \emph{vertices} of $X$. Given two simplices $\tau$ and $\sigma$ with $\tau\subseteq\sigma$, we say that $\tau$ is a \emph{face} of $\sigma$. 
	
	The \emph{dimension} of a simplex $\sigma$ is its cardinality, and the dimension of $X$ is the maximum of the dimension of its simplices, which could be infinite. A \emph{maximal simplex $\Delta$} of $X$ is a maximal collection of vertices with the property that any finite subset of $\Delta$ is a simplex of $X$. Note that if $X$ is infinite dimensional, then a maximal simplex may not be a simplex of $X$. The \emph{$r$-skeleton} of $X$, denoted by $X^{[r]}$, is the collection of simplices in $X$ with dimension at most $r$. 
	
	A simplicial map $f\colon X\to Y$ between simplicial complexes is a function $f$ from the vertex set of $X$ to the vertex set of $Y$ such that the image of each simplex of $X$ is a simplex of $Y$. 
\end{defn}

We summarise some standard results about Dehn twists: see \cite[Chapter 3]{MR2850125} for detailed discussions. Recall that, given $a,b\in\mathcal{C}^{[0]}(S)$, the \emph{geometric intersection number} of $a$ and $b$ is $i(a,b)=\min\{\vert\alpha\cap\beta\vert\mid\alpha\in a,\beta\in b\}$.

\begin{lemma}
	\label{lem_Dtp}
	Let $a,b$ be vertices of $\mathcal{C}(S)$. Then we have the following:
	\begin{enumerate}
		\item[(1)] If $a$ is an isotopy class of essential simple closed curves in $S$, then $T_a$ is of infinite order. 
		\item[(2)] For every $s,t\in\mathbb{Z}-\{0\}$, $T^s_a=T^t_b$ if and only if $a=b$ and $s=t$.
		\item[(3)] $fT_af^{-1}=T_{f(a)}$ for every $f\in\Mod(S)$.  
		\item[(4)] $i(a,b)=0$ if and only if $T_aT_b=T_bT_a$ if and only if $T_a(b)=b$.  
		\item[(5)] Given $k\in\mathbb{Z}$, we have $i(T_a^k(b),b)=\vert k\vert(i(a,b))^2$. 
	\end{enumerate}
\end{lemma}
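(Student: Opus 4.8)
The plan is to present these as the standard facts recorded in \cite[Chapter 3]{MR2850125}, organising the four parts so that the later items rest on the earlier ones and everything reduces to two geometric inputs: the model of $T_a$ as a homeomorphism supported in a closed annular neighbourhood of a representative of $a$, and the intersection-number estimate $i(T_a^n(b),b)=|n|\,i(a,b)^2$ for $n\in\mathbb{Z}$ (the specialisation to $c=b$ of the bound $|i(T_a^n(b),c)-|n|\,i(a,b)\,i(a,c)|\le i(b,c)$).

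I would begin with (3), which is nearly definitional and is used repeatedly afterwards. Fixing an orientation-preserving homeomorphism $\phi$ representing $f$, a representative curve of $a$, and a closed annular neighbourhood $A$ on which $T_a$ is supported, the conjugate $\phi T_a\phi^{-1}$ is the identity outside $\phi(A)$ and twists within the annulus $\phi(A)$, which is a neighbourhood of $f(a)$; since $\phi$ preserves orientation the twisting direction is preserved, so $\phi T_a\phi^{-1}$ represents $T_{f(a)}$. One only needs to record that this is independent of all choices, since a Dehn twist depends only on the isotopy class of its curve.

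For (1) I would use the intersection-number estimate. As $\mathrm{genus}(S)\ge 2$, there is a vertex $b$ with $i(a,b)\ge 1$, and then $i(T_a^n(b),b)=|n|\,i(a,b)^2>0=i(b,b)$ for all $n\neq 0$, so $T_a^n(b)\neq b$ and $T_a^n\neq\mathrm{id}$; thus $T_a$ has infinite order. For (4), the implication $i(a,b)=0\Rightarrow T_a(b)=b$ and $T_aT_b=T_bT_a$ is the support argument: realise $a$ and $b$ disjointly, so that $T_a$ may be taken supported in an annulus disjoint from $b$ and from a supporting annulus of $T_b$. The implication $T_a(b)=b\Rightarrow T_aT_b=T_bT_a$ is immediate from (3), since $T_aT_bT_a^{-1}=T_{T_a(b)}=T_b$. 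The remaining implication, $T_aT_b=T_bT_a\Rightarrow i(a,b)=0$, is the one I expect to be the main obstacle; I would argue the contrapositive, noting that $i(a,b)>0$ already gives $T_a(b)\neq b$ by the estimate above, and then invoking the Alexander method (or a direct comparison of $i(T_aT_b(c),c)$ with $i(T_bT_a(c),c)$ for a suitable test vertex $c$) to conclude that twists about crossing curves cannot commute.

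Finally, (2) combines the previous parts. One direction is trivial. For the converse, suppose $T_a^s=T_b^t$ with $s,t\neq 0$. By (4) together with the estimate, the set of vertices fixed by $T_a^s$ is exactly $\{c\mid i(a,c)=0\}$, and likewise the fixed set of $T_b^t$ is $\{c\mid i(b,c)=0\}$; equality of these two sets forces $a=b$, since for $\mathrm{genus}(S)\ge 2$ a vertex of $\mathcal{C}(S)$ is determined by the set of vertices disjoint from it. Then $T_a^s=T_a^t$, and the infinite order from (1) yields $s=t$. The only genuinely nontrivial points are the non-commutation input in (4) and the link-determines-vertex fact used in (2); both are standard for the surfaces considered here.
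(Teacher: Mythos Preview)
Your proposal is correct and follows exactly the standard arguments from \cite[Chapter~3]{MR2850125}, which is precisely what the paper does: it states the lemma without proof and defers entirely to that reference. In that sense your sketch is more detailed than the paper's own treatment, but the content is the same.

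One small organisational remark: in (4), the implication $T_aT_b=T_bT_a\Rightarrow i(a,b)=0$ does not need the Alexander method. Using (3) you get $T_{T_a(b)}=T_b$, and the injectivity $T_c=T_d\Rightarrow c=d$ (which follows from the intersection formula and the link-determines-vertex fact you already invoke for (2), independently of (4)) gives $T_a(b)=b$, whence $i(a,b)^2=i(T_a(b),b)=0$. This keeps the logical dependencies clean and avoids the heavier tool.
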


The following lemma is a generalisation of Lemma~\ref{lem_Dtp}(2).

\begin{lemma}
	\label{lem_3.17FM12}
	Let $m,k\in\mathbb{N}$. Let $\{a_1,\cdots,a_m\}$ and $\{b_1,\cdots,b_k\}$ be simplices in $\mathcal{C}(S)$. Let $p_i,q_j\in\mathbb{Z}-\{0\}$ where $1\le i\le m$ and $1\le j\le k$. Then if \[T_{a_1}^{p_1}T_{a_2}^{p_2}\cdots T_{a_m}^{p_m}=T_{b_1}^{q_1}T_{b_2}^{q_2}\cdots T_{b_k}^{q_k}\] in $\Mod(S)$, then $m=k$ and $\{T_{a_i}^{p_i}\mid 1\le i\le m\}=\{T_{b_i}^{q_i}\mid 1\le i\le k\}$. 
\end{lemma}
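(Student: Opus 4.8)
The plan is to reduce the statement to the classical fact that a multitwist (a product of powers of Dehn twists about disjoint curves) has a well-defined supporting multicurve and exponent data, and then to use the special case $m = k = 1$, which is Lemma~\ref{lem_Dtp}(2). The key observation is that the hypothesis $\{a_1,\dots,a_m\}$ is a simplex in $\mathcal{C}(S)$ means the $a_i$ are pairwise disjoint (and distinct) isotopy classes, so $T_{a_1}^{p_1}\cdots T_{a_m}^{p_m}$ is a multitwist; the $T_{a_i}$ commute by Lemma~\ref{lem_Dtp}(4), so the order of the factors is irrelevant and the products on both sides are genuine multitwists. The goal is then to show that a multitwist determines its set of weighted twists $\{T_{a_i}^{p_i}\}$.

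First I would recall the standard invariant: for a multitwist $f = T_{c_1}^{r_1}\cdots T_{c_\ell}^{r_\ell}$ with the $c_i$ pairwise disjoint and distinct and $r_i \neq 0$, and for any essential simple closed curve $b$, one can read off whether $b$ is (isotopic to) one of the $c_i$ from the growth of geometric intersection numbers $i(f^N(b), b)$ as $N \to \infty$, or more cleanly from the following: $f$ fixes the isotopy class $b$ if and only if $i(b, c_i) = 0$ for all $i$ (this is essentially Lemma~\ref{lem_Dtp}(4) applied factor-by-factor, using that disjointness of all the $c_i$ from $b$ lets the twists act independently). Hence the \emph{canonical reduction data} — the multicurve $\{c_1,\dots,c_\ell\}$ — is determined by $f$ as the minimal multicurve whose complement carries no curve moved by $f$; equivalently, $c_i$ is characterised among curves disjoint from $\{c_1,\dots,c_\ell\}$ by $i(f^N(b), c_i)$ being unbounded for $b = c_i$. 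Once the supporting multicurve is shown to be an invariant of $f$, so $\{a_1,\dots,a_m\} = \{b_1,\dots,b_k\}$ and in particular $m = k$, I would recover the exponents: restricting attention to an annular neighbourhood of $a_i$ (or using the subsurface projection to that annulus, or simply applying Lemma~\ref{lem_Dtp}(2) to the commuting factorisation after cancelling the twists about the other, disjoint curves, which act trivially near $a_i$) shows $p_i = q_{j(i)}$ where $a_i = b_{j(i)}$. This yields $\{T_{a_i}^{p_i}\} = \{T_{b_j}^{q_j}\}$ as required.

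Alternatively, and perhaps more self-containedly, I would argue purely algebraically: bring everything to one side to write $T_{a_1}^{p_1}\cdots T_{a_m}^{p_m} T_{b_1}^{-q_1}\cdots T_{b_k}^{-q_k} = 1$, and after using commutativity to cancel any common factors $T_{a_i}^{p_i} = T_{b_j}^{q_j}$ (which Lemma~\ref{lem_Dtp}(2) tells us happens exactly when $a_i = b_j$ and $p_i = q_j$), assume for contradiction that no cancellation is possible, i.e. the remaining curves on the two sides are disjoint from each other but form an honest relation $T_{c_1}^{r_1}\cdots T_{c_t}^{r_t} = 1$ among twists about distinct disjoint curves with all $r_i \neq 0$. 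Evaluating both sides on the isotopy class $c_1$ and using Lemma~\ref{lem_Dtp}(4) to see that $T_{c_2},\dots,T_{c_t}$ fix $c_1$ while $T_{c_1}^{r_1}(c_1) = c_1$ is consistent — so instead evaluate on a curve $d$ with $i(d, c_1) \neq 0$ but $i(d, c_i) = 0$ for $i \geq 2$ (such $d$ exists since the $c_i$ are a proper multicurve, not filling), giving $T_{c_1}^{r_1}(d) = d$ and hence, by Lemma~\ref{lem_Dtp}(4) again, $i(d, c_1) = 0$, a contradiction.

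The main obstacle is the case analysis ensuring that "no further cancellation possible" really does force the leftover curves on the left to be disjoint from those on the right: a priori two distinct curves $a_i$ and $b_j$ could intersect, and then the product is no longer a multitwist and the clean intersection-number argument breaks down. Handling this is exactly where one needs the structure theory — either the fact (Thurston) that a product of positive twists about curves that fill is pseudo-Anosov, or more elementarily the observation that if some $a_i$ and $b_j$ intersect then the whole word $T_{a_1}^{p_1}\cdots T_{b_k}^{-q_k}$ need not simplify to a multitwist, so one should instead track a single invariant such as $i(w^N(d), d)$ for a well-chosen curve $d$ and derive unboundedness, contradicting $w = 1$. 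I expect the cleanest route in the paper's setting is to cite the corresponding statement from \cite{MR2850125} (this lemma is labelled as a generalisation of a result there, presumably Farb–Margalit Lemma~3.17) and then fill in the bookkeeping that translates "equal as multitwists" into the stated equality of multisets of weighted twists.
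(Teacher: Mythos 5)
First, a point of reference: the paper does not prove this lemma at all. It appears under the heading ``we summarise some standard results about Dehn twists: see \cite[Chapter 3]{MR2850125}'', and, as the label indicates, it is quoted as Lemma~3.17 of Farb--Margalit. So your closing recommendation --- cite the corresponding statement from \cite{MR2850125} and only do the bookkeeping --- is exactly what the authors do, and there is no in-paper argument to measure your sketch against.

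Judged as a self-contained proof, however, your attempt has one genuine unresolved gap, which you correctly flag but do not close: nothing in the hypotheses makes the $a_i$ disjoint from the $b_j$, so the word $T_{a_1}^{p_1}\cdots T_{a_m}^{p_m}T_{b_k}^{-q_k}\cdots T_{b_1}^{-q_1}$ is not a priori a multitwist, and the reduction to a relation $T_{c_1}^{r_1}\cdots T_{c_t}^{r_t}=1$ among twists about pairwise disjoint curves is precisely the content that needs proof. Your first route has the same issue in disguise: the claim that a multitwist $f=\prod_i T_{c_i}^{r_i}$ fixes a class $b$ only if $i(b,c_i)=0$ for all $i$ follows immediately from the intersection formula $i(f(b),b)=\sum_i\lvert r_i\rvert\, i(c_i,b)^2$ only when the exponents all have the same sign (the standard multitwist intersection formula in \cite{MR2850125} is stated under that hypothesis); with mixed signs one needs an additional argument --- canonical reduction systems, or annular subsurface projections --- to see that the supporting multicurve is an invariant of the mapping class rather than of a chosen factorisation. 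The parts you do carry out are correct and are the easy half: producing, via Lemma~\ref{lem_elemove}, a curve $d$ meeting exactly one curve of a multicurve to kill a putative relation $\prod_i T_{c_i}^{r_i}=1$ with the $c_i$ pairwise disjoint (note that the final step there needs $T_{c_1}^{r_1}(d)=d\Rightarrow i(c_1,d)=0$ for arbitrary $r_1\neq 0$, which is the power version of Lemma~\ref{lem_Dtp}(4), not the statement as written), and matching exponents once the supporting multicurves are known to coincide. So either cite \cite[Lemma~3.17]{MR2850125} outright, as the paper does, or supply the missing step that identifies $\{a_1,\cdots,a_m\}$ from the mapping class alone.
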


It follows from Lemmata~\ref{lem_Dtp}(1)(4) and \ref{lem_3.17FM12} that, for each $1\le i\le n$, $H_i\cong\mathbb{Z}^{3g-3}$, where $g\ge2$ is the genus of surface $S$. By definition of Dehn twists, $H_i$ is a subgroup of $\Stab(P_i)$. 

Given a collection of simple closed curves $C$, the \emph{cut surface} $S_C$ is obtained from $S$ by removing a regular neighbourhood of $C$. A \emph{pants decomposition} of $S$ is a collection of disjoint simple closed curves $P$ in $S$ such that $S_P$ consists of disjoint copies of pairs of pants, which are spheres with three holes. An Euler characteristic argument shows that a pants decomposition $P$ is a collection of $3g-3$ disjoint, non-isotopic essential simple closed curves and $S_P$ is a disjoint union of $2g-2$ pairs of pants. Note that, up to isotopy of each curve, the collection of pants decompositions is in bijective correspondence with the collection maximal simplices of $\mathcal{C}(S)$.

\begin{lemma}
	\label{lem_Hfistab}
	Let $P$ be a maximal simplex in $\mathcal{C}(S)$. Let $H=\langle T_a\mid a\in P\rangle$. Then $H$ is a finite-index subgroup of $\Stab(P)$.
\end{lemma}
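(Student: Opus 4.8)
The plan is to show that $\Stab(P)$ has a finite-index subgroup that coincides with $H$, by cutting $S$ along $P$. Write $P=\{a_1,\dots,a_{3g-3}\}$, let $N$ be the regular neighbourhood of $P$ removed to form $S_P$ --- a disjoint union $A_1\sqcup\cdots\sqcup A_{3g-3}$ of annular neighbourhoods of the curves $a_i$ --- and recall that $S_P$ is a disjoint union $\Sigma_1\sqcup\cdots\sqcup\Sigma_{2g-2}$ of pairs of pants, with $S$ obtained by regluing $N$ to $S_P$ along their common boundary. Each $T_{a_i}$ is supported in $A_i$, so every element of $H$ acts trivially on $S_P$; the lemma amounts to the assertion that, up to finite index, no other element of $\Stab(P)$ does.

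First I would pass to the subgroup $\Stab_0(P)\le\Stab(P)$ consisting of the mapping classes that fix each $a_i$ up to isotopy, fix each component $\Sigma_j$ of $S_P$, and do not interchange the two local sides of any $a_i$. This subgroup has finite index: assigning to $f$ the permutation it induces on $P$ is a homomorphism $\Stab(P)\to\Sym_{3g-3}$, and on its kernel the induced permutation of $\{\Sigma_1,\dots,\Sigma_{2g-2}\}$ together with the $3g-3$ possible side-interchanges gives a homomorphism to the finite group $\Sym_{2g-2}\times(\mathbb{Z}/2)^{3g-3}$ whose kernel is $\Stab_0(P)$. Moreover $H\le\Stab_0(P)$, since $T_{a_i}$ fixes every $a_j$, every $\Sigma_j$, and every side.

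The substance of the proof is the reverse inclusion $\Stab_0(P)\le H$. Let $f\in\Stab_0(P)$. Using that $f$ preserves each $a_i$ up to isotopy, together with the uniqueness of regular neighbourhoods up to ambient isotopy, one may isotope $f$ so that $f(A_i)=A_i$ for every $i$; then $f(S_P)=S_P$ and, for each $j$, the restriction $f|_{\Sigma_j}$ is an orientation-preserving self-homeomorphism of the pair of pants $\Sigma_j$ fixing each of its three boundary circles setwise. Since a pair of pants contains no essential simple closed curve, any such homeomorphism is isotopic to the identity of $\Sigma_j$; hence $f|_{S_P}$ is isotopic to the identity of $S_P$. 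By the isotopy extension theorem, $f$ is isotopic in $S$ to a homeomorphism $f'$ with $f'|_{S_P}=\mathrm{id}$, and $f'$ is therefore supported in $N$. On each $A_i$, $f'$ restricts to an orientation-preserving homeomorphism fixing $\partial A_i$ pointwise; as the mapping class group of an annulus rel boundary is infinite cyclic generated by the Dehn twist, $f'|_{A_i}$ is isotopic rel $\partial A_i$ to $T_{a_i}^{k_i}$ for a unique $k_i\in\mathbb{Z}$. Reassembling these isotopies gives $f=\prod_{i}T_{a_i}^{k_i}$ in $\Mod(S)$, so $f\in H$.

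Combining the two inclusions yields $\Stab_0(P)=H$, whence $[\Stab(P):H]=[\Stab(P):\Stab_0(P)]<\infty$. The steps requiring the most care are the appeal to isotopy extension, to pass from ``$f|_{S_P}$ is isotopic to the identity'' back to ``$f$ is supported in $N$'', and the contrast between the mapping class group of a pair of pants --- finite once its boundary is left free, since there are no essential curves to twist about --- and that of an annulus rel boundary, which is infinite cyclic; it is precisely this contrast that keeps $\Stab_0(P)$ from being strictly larger than $H$. All of the surface-topology inputs used here are standard; see \cite[Chapter 3]{MR2850125}, and indeed the whole argument is an instance of the computation of the kernel of cutting along a multicurve.
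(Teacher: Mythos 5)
Your proof is correct and follows essentially the same route as the paper: both arguments pass to the finite-index subgroup of $\Stab(P)$ that fixes each curve of $P$ and does not interchange the two sides of any curve (your $\Stab_0(P)$ is the paper's $\ker(h)\le\Stab^{\mathrm{pw}}(P)$), and then identify that subgroup with $H$ by cutting along $P$ and using the mapping class group computations for pairs of pants and annuli. The only cosmetic difference is that you localise the residual twisting in the annular neighbourhoods of the curves, whereas the paper phrases the same fact as the restriction to each pair of pants being a product of boundary Dehn twists.
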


\begin{proof}
	Define $\Stab^{\mathrm{pw}}(P)=\{f\in\Mod(S)\mid f(a)=a,\forall a\in P\}$. We will show that $H$ is a finite-index subgroup of $\Stab^{\mathrm{pw}}(P)$, which is finite index in $\Stab(P)$. Identify $P$ with a pants decomposition of $S$. 
	
	Let $\varphi\in\Stab(P)$. This induces a permutation $\varphi_*\colon P\to P$. Define $f\colon\Stab(P)\to\Sym(P)$ by $\varphi\mapsto\varphi_*$. Then $\ker(f)=\Stab^{\mathrm{pw}}(P)$, so we have a short exact sequence:
	\begin{center}
		\begin{tikzcd}
			1\ar[r] & \Stab^{\mathrm{pw}}(P)\ar[r,hookrightarrow] & \Stab(P)\ar[r,"f"] & \Image(f)\ar[r] & 1.
		\end{tikzcd}
	\end{center}
	Since $\Image(f)$ is finite, $\Stab^{\mathrm{pw}}(P)$ is a finite-index subgroup of $\Stab(P)$.
	
	We now show that $H$ is a finite index subgroup of $\Stab^{\mathrm{pw}}(P)$ by proving that $H$ is the kernel of a group homomorphism $h\colon\Stab^{\mathrm{pw}}(P)\to\prod_{i=1}^{3g-3}(\mathbb{Z}/2\mathbb{Z})$. To define $h$, let $\gamma\in\Stab^{\mathrm{pw}}(P)$. Consider a collection of representatives $\{a_1,\cdots,a_{3g-3}\}$ of the isotopy classes of the curves in $P$ and consider the corresponding cut surface which for simplicity we denote by $S_P$. Consider a representative of the mapping class $\gamma$ that fixes setwise a regular neighbourhood of each of the curves $\{a_1,\cdots,a_{3g-3}\}$, for simplicity we denote this homeomorphism by $\gamma$. Denote two boundary components of such regular neighbourhood of $a_i$ in $S$ as $a_i^+$ and $a_i^-$. Then either $\gamma_*(a_i^+) = a_i^+$ and  $\gamma_*(a_i^-)=a_i^-$, or $\gamma_*(a_i^+)=a_i^-$ and $\gamma_*(a_i^-)=a_i^+$ in $S_P$. Define $h\colon\Stab^{\mathrm{pw}}(P)\to\prod_{i=1}^{3g-3}(\mathbb{Z}/2\mathbb{Z})$ by $h(\gamma)=(h_1,\cdots,h_{3g-3})$, where $h_i=0$ if $\gamma_*(a_i^+)=a_i^+$ and  $\gamma_*(a_i^-)=a_i^-$, and $h_i=1$ if $\gamma_*(a_i^+)=a_i^-$ and $\gamma_*(a_i^-)=a_i^+$. Note that $H\subseteq\ker(h)$. Let $\sigma\in\ker(h)$ and consider a representative that fixes setwise each of the curves $a_i$. Let $S_1,\cdots,S_{2g-2}$ be pairs of pants in $S_P$. Then $\sigma_*(S_j)=S_j$ for every $1\le j\le2g-2$, and $\sigma_*(a_i^+)=a_i^+$ and $\sigma_*(a_i^-)=a_i^-$. This means $\sigma_*$ fixes every boundary component of $S_j$, so $\sigma_*\arrowvert_{S_j}$ is a composition of Dehn twists about boundary components of $S_j$. This forces $\sigma\in H$, since $T_{\alpha_i^+}=T_{\alpha_i^-}=T_{\alpha_i}$ in $\Mod(S)$, so $\ker(h)\subseteq H$. Therefore, we have a short exact sequence
	\begin{center}
		\begin{tikzcd}
			1\ar[r] & H\ar[r,hookrightarrow] & \Stab^{\mathrm{pw}}(P)\ar[r,"h"] & \Image(h)\ar[r] & 1. 
		\end{tikzcd}
	\end{center}
	Since $\Image(h)$ is finite, $H$ is a finite-index subgroup of $\Stab^{\mathrm{pw}}(P)$.
\end{proof}

Recall that $\mathcal{P}$ denotes the collection of all maximal simplices of $\mathcal{C}(S)$.

\begin{lemma} 	
	\label{lem_comgen}
	Let $Q_0,\cdots,Q_k\in\mathcal{P}$. For each $0\le i\le k$, let $G_i=\langle T_a\mid a\in Q_i\rangle$. Let $g_0,\cdots,g_k\in\Mod(S)$. Then the following are equivalent:
	\begin{enumerate}
		\item $\bigcap_{i=0}^kg_i\Stab(Q_i)g_i^{-1}$ is infinite.
		\item $\bigcap_{i=0}^kg_iG_ig_i^{-1}$ is infinite. 
		\item $\{g_0Q_0,\cdots,g_kQ_k\}$ is a simplex of $\mathcal{N}(\mathcal{C}(S))$, that is, $\bigcap_{i=0}^kg_i(Q_i)$ is non-empty.
	\end{enumerate}
	Moreover, $\bigcap_{i=0}^kg_iG_ig_i^{-1}=\langle T_a\mid a\in\bigcap_{i=0}^kg_i(Q_i)\rangle$.
\end{lemma}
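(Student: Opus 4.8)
The plan is to reduce to the untwisted case by conjugation, establish the displayed ``moreover'' identity first, and then read off the three equivalences from it. Using Lemma~\ref{lem_Dtp}(3), rewrite $g_iG_ig_i^{-1}=\langle T_b\mid b\in g_i(Q_i)\rangle$ and note that $g_i\Stab(Q_i)g_i^{-1}=\Stab(g_i(Q_i))$. Since each $g_i(Q_i)$ is again a pants decomposition, i.e.\ a maximal simplex of $\mathcal{C}(S)$ (and a genuine finite simplex, as $3g-3<\infty$), it is enough to prove everything with all $g_i$ trivial; write $R_i=g_i(Q_i)$ and $G_i'=\langle T_a\mid a\in R_i\rangle$, so the goal becomes the equivalence of ``$\bigcap_i\Stab(R_i)$ infinite'', ``$\bigcap_iG_i'$ infinite'', ``$\bigcap_iR_i\neq\emptyset$'', together with $\bigcap_iG_i'=\langle T_a\mid a\in\bigcap_iR_i\rangle$.

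For the ``moreover'' identity, the inclusion $\supseteq$ is immediate: each $a\in\bigcap_iR_i$ satisfies $T_a\in G_i'$ for all $i$. For $\subseteq$, take a nontrivial $f\in\bigcap_iG_i'$ and write $f=\prod_{a\in R_i}T_a^{p_{i,a}}$ for each $i$; deleting the zero exponents gives expressions of exactly the shape appearing in Lemma~\ref{lem_3.17FM12}, over the sub-simplices $\{a\in R_i\mid p_{i,a}\neq0\}$ of $\mathcal{C}(S)$. Lemma~\ref{lem_3.17FM12} together with Lemma~\ref{lem_Dtp}(2) then forces these sub-simplices, and the associated exponents, to agree for every $i$; in particular the set of curves occurring with nonzero exponent in $f$ is contained in every $R_i$, hence in $\bigcap_iR_i$, so $f\in\langle T_a\mid a\in\bigcap_iR_i\rangle$.

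Granting the identity, (2)$\Leftrightarrow$(3) is immediate: $\langle T_a\mid a\in\bigcap_iR_i\rangle$ is infinite exactly when $\bigcap_iR_i\neq\emptyset$, since a single Dehn twist has infinite order by Lemma~\ref{lem_Dtp}(1) while the empty set yields the trivial group. For (1)$\Leftrightarrow$(2): each $T_a$ with $a\in R_i$ fixes $R_i$ pointwise (Lemma~\ref{lem_Dtp}(4)), so $G_i'\le\Stab(R_i)$, and by Lemma~\ref{lem_Hfistab} this inclusion has finite index. A routine argument shows that if $A_i\le B_i$ is finite index for each $i$, then $\bigcap_iA_i$ is finite index in $\bigcap_iB_i$: indeed $\bigcap_iA_i\subseteq\bigcap_iB_i=:B$, and $\bigcap_iA_i=\bigcap_i(A_i\cap B)$ is a finite intersection of finite-index subgroups of $B$. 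Applying this with $A_i=G_i'$ and $B_i=\Stab(R_i)$ shows that $\bigcap_iG_i'$ has finite index in $\bigcap_i\Stab(R_i)$, so the two are infinite together.

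I expect the only delicate point to be the $\subseteq$ direction of the ``moreover'' identity, where one must apply Lemma~\ref{lem_3.17FM12} across all indices $i$ simultaneously and carefully track that the curves carrying nonzero exponents are common to all the $R_i$ --- and, relatedly, check that the relevant subsets really are simplices of $\mathcal{C}(S)$ so that the lemma applies (automatic here since $3g-3$ is finite). Everything else --- the finite-index intersection lemma and the infinite order of a Dehn twist --- is standard.
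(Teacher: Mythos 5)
Your proposal is correct and follows essentially the same route as the paper: the equivalence of (1) and (2) via Lemma~\ref{lem_Hfistab} and the finite-index-intersection fact, and the equivalence of (2) and (3) together with the ``moreover'' identity via Lemmata~\ref{lem_3.17FM12} and \ref{lem_Dtp}(2) applied to a common nontrivial element. The only differences are organisational (conjugating away the $g_i$ at the outset and establishing the displayed identity before reading off (2)$\Leftrightarrow$(3)), and both are sound.
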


\begin{proof}
	Note that $g_iG_ig_i^{-1}=\langle T_a\mid a\in g_i(Q_i)\rangle$ for each $0\le i\le k$,   
	see Lemma~\ref{lem_Dtp}(3).
	
	We first prove that (2) implies (1). Note that if $\bigcap_{i=0}^kg_iG_ig_i^{-1}$ is infinite then $\bigcap_{i=0}^kg_i\Stab(Q_i)g_i^{-1}$ is infinite, since each $G_i$ is a subgroup of $\Stab(Q_i)$, and therefore 
	$\bigcap_{i=0}^kg_i\Stab(Q_i)g_i^{-1}$ is infinite. That (1) implies (2) is a consequence of $\bigcap_{i=0}^kg_iG_ig_i^{-1}$ being a finite-index subgroup of $\bigcap_{i=0}^kg_i\Stab(Q_i)g_i^{-1}$ which follows from Lemma~\ref{lem_Hfistab} and the following well-known facts: 
	\begin{enumerate}
		\item[(i)] If $A_1,\cdots,A_m,B_1,\cdots,B_m$ are subgroups of a group $G$ such that $A_j$ is a finite-index subgroup of $B_j$ for every $1\le j\le m$, then $\bigcap_{j=1}^mA_j$ is finite-index in $\bigcap_{j=1}^mB_j$ (for a proof, see \cite[Lemma 2.1]{MR3008315}). 
		\item[(ii)] If $A,B$ are subgroups of a group $G$ such that $A$ is a finite-index subgroup of $B$, then for every $g\in G$, $gAg^{-1}$ is finite-index in $gBg^{-1}$. 
	\end{enumerate}
	
	We now show that (2) and (3) are equivalent. Suppose that $\bigcap_{i=0}^kg_i(Q_i)\neq\emptyset$. Let $\gamma\in\bigcap_{i=0}^kg_i(Q_i)$. Then $T_\gamma\in g_iG_ig_i^{-1}$ for every $0\le i\le k$. Then $\bigcap_{i=0}^kg_iG_ig_i^{-1}$ is infinite since it contains the subgroup $\langle T_\gamma\rangle$. Conversely, suppose that $\bigcap_{i=0}^kg_iG_ig_i^{-1}$ is infinite. In particular, there is a non-trivial $g\in\bigcap_{i=0}^kg_iG_ig_i^{-1}$ such that, for each $0\le i\le k$, there are some $a^i_1,\cdots,a^i_{t(i)}\in g_i(Q_i)$ and some $x^i_1,\cdots,x^i_{t(i)}\in\mathbb{Z}-\{0\}$ such that \[g=T_{a^0_1}^{x^0_1}\cdots T_{a^0_{t(0)}}^{x^0_{t(0)}}=\cdots=T_{a^k_1}^{x^k_1}\cdots T_{a^k_{t(k)}}^{x^k_{t(k)}}.\] By Lemma~\ref{lem_3.17FM12}, $t(0)=\cdots =t(k)$ and $\{T_{a^0_i}^{x^0_i}\mid 1\le i\le t(0)\}=\cdots=\{T_{a^k_i}^{x^k_i}\mid 1\le i\le t(k)\}\neq\emptyset$. Then Lemma~\ref{lem_Dtp}(2) implies that $\{a_i^0\mid 1\le i\le t(0)\}=\cdots=\{a_i^k\mid 1\le i\le t(k)\}\subseteq g_i(Q_i)$ for every $0\le i\le k$, hence $\bigcap_{i=0}^kg_i(Q_i)$ is non-empty.
	
	For the last assertion, if $g\in\bigcap_{i=0}^kg_iG_ig_i^{-1}$, the argument above shows that $g\in\langle T_a\mid a\in\bigcap_{i=0}^kg_i(Q_i)\rangle$. Conversely, if $g\in\langle T_a\mid a\in\bigcap_{i=0}^kg_i(Q_i)\rangle$, then $g\in g_iG_ig_i^{-1}$ for every $0\le i\le k$, so $g\in\bigcap_{i=0}^kg_iG_ig_i^{-1}$.
\end{proof}

We are now ready to prove Theorem~\ref{thm_combequi}(1), that is, $\mathcal{N}(\mathcal{C}(S))$ is isomorphic to $\mathcal{K}(\Mod(S),\mathcal{T})$. Recall that $\mathcal{N}(\mathcal{C}(S))$ is the nerve of the collection of maximal simplices of $\mathcal{C}(S)$, and $\mathcal{T}$ is the collection of $\Mod(S)$-stabilisers of representatives $P_1,\cdots,P_n$ of $\Mod(S)$-orbits of the action on the maximal simplices of $\mathcal{C}(S)$. 

\begin{proof}[Proof of Theorem~\ref{thm_combequi}(1)]
	Define $\varphi\colon\mathcal{K}(\Mod(S),\mathcal{T})\to\mathcal{N}(\mathcal{C}(S))$ by $\varphi(g\Stab(P_t))=g(P_t)$ for each $1\le t\le n$. For $g,h\in\Mod(S)$, if $g\Stab(P_a)=h\Stab(P_b)$ then $a=b$, and $g\Stab(P_t)=h\Stab(P_t)$ if and only if $g(P_t)=h(P_t)$, so $\varphi$ is well-defined and injective. For every $P\in\mathcal{P}$, there are $x\in\Mod(S)$ and $1\le j\le n$ with $x(P_j)=P$, so $\varphi(x\Stab(P_j))=x(P_j)=P$. This shows that $\varphi$ is surjective. 
	
	We now show that $\varphi,\varphi^{-1}$ are simplicial maps. Let $g_0\Stab(Q_0),\cdots,g_k\Stab(Q_k)$ be vertices of $\mathcal{K}(\Mod(S),\mathcal{T})$. By Lemma~\ref{lem_comgen}, $\{g_0\Stab(Q_0),\cdots,g_k\Stab(Q_k)\}$ is a simplex in $\mathcal{K}(\Mod(S),\mathcal{T})$ if and only if $\{g_0(Q_0),\cdots,g_k(Q_k)\}$ is a simplex in $\mathcal{N}(\mathcal{C}(S))$. It follows that $\varphi$ and $\varphi^{-1}$ are simplicial. This proves that $\varphi$ is a simplicial isomorphism. 
\end{proof}

\begin{remark}
	Note that $\mathcal{K}(\Mod(S),\mathcal{H})$ is not isomorphic to $\mathcal{N}(\mathcal{C}(S))$. For each $1\le i\le n$, $H_i$ is not isomorphic to $\Stab(P_i)$, so there are $g,h\in H_i$ with $gH_i\neq hH_i$ and $g(P_i)=h(P_i)$. 
\end{remark}

\section{Automorphism group of the nerve}
\label{sec_autx}

In this section, we prove two general results for simplicial complexes, Theorems~\ref{thm_nervenerve} and \ref{thm_assumpautxiso}. Then we use these results to deduce  
Theorem~\ref{thm_combequi}(2) and Theorem~\ref{thm_metacicmodsT} from the introduction. 

Let $X$ be a finite-dimensional simplicial complex. Recall that $\mathcal{N}(X)$ denotes the nerve of the collection of maximal simplices of $X$. Analogously, 
$\mathcal{N}(\mathcal{N}(X))$ is the nerve of the collection of maximal simplices of $\mathcal{N}(X)$.  For every $x\in X^{[0]}$, define $\Sigma_x$ as the collection of maximal simplices of $X$ that contain the vertex $x$. 

\begin{theorem}
	\label{thm_nervenerve}
	Suppose that for every $x,y\in X^{[0]}$, $\Sigma_x\subseteq\Sigma_y$ implies that $x=y$. Then there is a simplicial isomorphism between $X$ and $\mathcal{N}(\mathcal{N}(X))$.
\end{theorem}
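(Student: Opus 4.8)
The plan is to exhibit the isomorphism explicitly as the map $\varphi\colon X^{[0]}\to\mathcal{N}(\mathcal{N}(X))^{[0]}$ sending a vertex $x$ to the collection $\Sigma_x$ of maximal simplices of $X$ containing $x$, and then to check that $\varphi$ is a bijection carrying simplices to simplices in both directions. First I would record a few consequences of finite-dimensionality of $X$ that I will use freely: a maximal simplex of $X$ has at most $\dim X$ vertices (a larger finite subcollection would be a simplex of $X$ of too large dimension), hence is an honest finite simplex of $X$; every simplex of $X$, in particular every singleton $\{x\}$, is contained in a maximal simplex; and for each $x\in X^{[0]}$ the set $\Sigma_x$ is a non-empty \emph{simplex of $\mathcal{N}(X)$} in the generalised sense, since any finite subcollection of $\Sigma_x$ has $x$ in its common intersection and is therefore a simplex of $\mathcal{N}(X)$.

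The heart of the argument is to identify the maximal simplices of $\mathcal{N}(X)$ — i.e.\ the vertices of $\mathcal{N}(\mathcal{N}(X))$ — as precisely the sets $\Sigma_x$, $x\in X^{[0]}$. For the inclusion ``$\subseteq$'', let $\mathcal{M}$ be a maximal simplex of $\mathcal{N}(X)$ and fix $\Delta_0\in\mathcal{M}$; since $\{\Delta,\Delta_0\}$ is a simplex of $\mathcal{N}(X)$ for every $\Delta\in\mathcal{M}$, each such $\Delta$ meets $\Delta_0$, so $\mathcal{M}=\bigcup_{v\in\Delta_0}\mathcal{M}_v$ where $\mathcal{M}_v=\{\Delta\in\mathcal{M}\mid v\in\Delta\}$. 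If $\mathcal{M}\neq\mathcal{M}_v$ for every $v\in\Delta_0$, pick $\Delta_v\in\mathcal{M}\setminus\mathcal{M}_v$ for each of the finitely many $v\in\Delta_0$; then $\{\Delta_0\}\cup\{\Delta_v\mid v\in\Delta_0\}$ is a finite subcollection of the simplex $\mathcal{M}$, so its members share a vertex $w$, which lies in $\Delta_0$, hence equals some $v$ with $w\in\Delta_v$, contradicting $v\notin\Delta_v$. Therefore $\mathcal{M}=\mathcal{M}_v\subseteq\Sigma_v$ for some $v$, and maximality of $\mathcal{M}$ forces $\mathcal{M}=\Sigma_v$. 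For ``$\supseteq$'', each $\Sigma_x$ is maximal: extend it (by Zorn's lemma) to a maximal simplex of $\mathcal{N}(X)$, which by the previous step equals $\Sigma_y$ for some $y$; then $\Sigma_x\subseteq\Sigma_y$, so the hypothesis gives $x=y$, whence $\Sigma_x=\Sigma_y$ is maximal. The same hypothesis makes $x\mapsto\Sigma_x$ injective, so $\varphi$ is a bijection onto the vertex set of $\mathcal{N}(\mathcal{N}(X))$.

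It then remains to check that $\varphi$ and $\varphi^{-1}$ are simplicial, which I expect to follow from a single chain of equivalences: a finite set $\{x_0,\dots,x_k\}\subseteq X^{[0]}$ is a simplex of $X$ $\iff$ it is contained in some maximal simplex $\Delta$ of $X$ (finite-dimensionality) $\iff$ $\bigcap_{i=0}^k\Sigma_{x_i}\neq\emptyset$ $\iff$ $\{\Sigma_{x_0},\dots,\Sigma_{x_k}\}$ is a simplex of $\mathcal{N}(\mathcal{N}(X))$. Both $\varphi$ and $\varphi^{-1}$ therefore preserve simplices, and $\varphi$ is a simplicial isomorphism.

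The step I expect to be the main obstacle is the identification of the maximal simplices of $\mathcal{N}(X)$ with the sets $\Sigma_x$ — in particular the ``finite witness'' argument showing that a maximal simplex of $\mathcal{N}(X)$ cannot spread across two or more vertices of a chosen $\Delta_0$ — together with the bookkeeping needed to stay honest about the distinction, stressed in the excerpt, between a \emph{simplex} (a finite element of the complex) and a \emph{maximal simplex} (a possibly infinite maximal collection of vertices whose finite subsets are all simplices), and to invoke Zorn's lemma when extending simplices of $\mathcal{N}(X)$ to maximal ones.
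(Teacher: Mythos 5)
Your proposal is correct and follows essentially the same route as the paper: identify the maximal simplices of $\mathcal{N}(X)$ as exactly the sets $\Sigma_x$ via a finite-witness contradiction (the paper's Lemma~\ref{lem_maxsimexi}, which picks one maximal simplex $K$ and, for each vertex of $K$, a member of the family missing that vertex — the same device as your $\Delta_0$ and $\Delta_v$'s), then use the hypothesis for injectivity and the equivalence ``$\{x_0,\dots,x_k\}$ is a simplex iff $\bigcap_i\Sigma_{x_i}\neq\emptyset$'' (the paper's Lemma~\ref{lem_simpnonemp}) for simpliciality in both directions. No gaps; the bookkeeping about possibly-infinite maximal simplices versus honest simplices is handled the same way in the paper.
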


\begin{theorem}
	\label{thm_assumpautxiso}
	Suppose the following two conditions hold for $X$. 
	\begin{enumerate}
		\item For every $x,y\in X^{[0]}$, $\Sigma_x\subseteq\Sigma_y$ implies that $x=y$.
		\item If $K$ is a simplex of $X$ and $x$ is a vertex of $X$ such that $x\notin K$, then there is a maximal simplex $L$ of $X$ such that $K\subseteq L$ and $x\notin L$.
	\end{enumerate}
	Then $\Aut(X)$ is isomorphic to $\Aut(\mathcal{N}(X))$. 
\end{theorem}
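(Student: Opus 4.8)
The plan is to build mutually inverse maps between $\Aut(X)$ and $\Aut(\mathcal{N}(X))$. First I would observe that any $\phi \in \Aut(X)$ permutes the maximal simplices of $X$ (since $\phi$ and $\phi^{-1}$ send simplices to simplices, $\phi$ sends maximal simplices to maximal simplices), and $\phi$ preserves the intersection pattern of maximal simplices because it is a bijection on vertices. Hence $\phi$ induces a simplicial automorphism $\mathcal{N}(\phi)$ of $\mathcal{N}(X)$, whose vertex set is the set of maximal simplices of $X$. The assignment $\phi \mapsto \mathcal{N}(\phi)$ is clearly a group homomorphism $\Phi\colon \Aut(X) \to \Aut(\mathcal{N}(X))$.

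Next I would show $\Phi$ is \emph{injective}. If $\mathcal{N}(\phi)$ is the identity, then $\phi$ fixes every maximal simplex of $X$ setwise; I must deduce that $\phi$ fixes every vertex. For a vertex $x$, the collection $\Sigma_x$ of maximal simplices containing $x$ is sent by $\phi$ to $\Sigma_{\phi(x)}$; since $\phi$ fixes each maximal simplex setwise, $\Sigma_x$ consists of simplices fixed setwise by $\phi$, and in particular $x \in L$ for $L \in \Sigma_x$ forces $\phi(x) \in L$, so $\Sigma_x \subseteq \Sigma_{\phi(x)}$ and by symmetry (applying the same to $\phi^{-1}$) $\Sigma_x = \Sigma_{\phi(x)}$; condition~(1) then gives $\phi(x) = x$. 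So $\Phi$ is injective.

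The main work is \emph{surjectivity}: given $\psi \in \Aut(\mathcal{N}(X))$, construct $\phi \in \Aut(X)$ with $\mathcal{N}(\phi) = \psi$. Here I would use condition~(2), which is what lets us recover a vertex of $X$ from the family of maximal simplices containing it. For each vertex $x$ of $X$, consider $\Sigma_x \subseteq X^{[0]}_{\max}$ (the vertex set of $\mathcal{N}(X)$); this is a simplex of $\mathcal{N}(X)$ iff the maximal simplices in $\Sigma_x$ have a common vertex, which they do (namely $x$), so $\Sigma_x$ spans a simplex of $\mathcal{N}(X)$. The key claim is that $\Sigma_x$ is in fact a \emph{maximal} simplex of $\mathcal{N}(X)$: if $\Sigma_x$ were contained in a strictly larger simplex, there would be a maximal simplex $M$ of $X$ with $M \notin \Sigma_x$ (so $x \notin M$) yet $\{M\} \cup \Sigma_x$ still spanning a simplex of $\mathcal{N}(X)$, meaning all of $M, \Sigma_x$ share a common vertex $y$; then $\Sigma_x \subseteq \Sigma_y$, forcing $x = y$ by~(1), contradicting $x \notin M$ while $y \in M$. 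Conversely, I'd want: every maximal simplex of $\mathcal{N}(X)$ arises as $\Sigma_x$ for a unique $x$. For uniqueness, $\Sigma_x = \Sigma_{x'}$ with~(1) gives $x = x'$. For existence, let $\mathcal{M}$ be a maximal simplex of $\mathcal{N}(X)$; its elements are maximal simplices of $X$ with a common vertex $x$, so $\mathcal{M} \subseteq \Sigma_x$, and by maximality of $\mathcal{M}$ and the fact that $\Sigma_x$ is a simplex of $\mathcal{N}(X)$, we get $\mathcal{M} = \Sigma_x$. This sets up a bijection $x \leftrightarrow \Sigma_x$ between $X^{[0]}$ and the maximal simplices of $\mathcal{N}(X)$. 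Now $\psi$ permutes the maximal simplices of $\mathcal{N}(X)$, hence induces a bijection $\phi\colon X^{[0]} \to X^{[0]}$ via $\Sigma_{\phi(x)} = \psi(\Sigma_x)$. I would then check $\phi$ is simplicial both ways: a set $\{x_0,\dots,x_k\}$ is a simplex of $X$ iff there is a maximal simplex of $X$ containing all of them, iff $\Sigma_{x_0} \cap \dots \cap \Sigma_{x_k} \neq \emptyset$ in $\mathcal{N}(X)$'s vertex set; applying $\psi$ (a simplicial automorphism, so it preserves such nonempty intersections of maximal simplices — here I'd lean on the characterization of when a collection of maximal simplices of $\mathcal{N}(X)$ has common refinement, or argue directly that $\psi$ and $\psi^{-1}$ preserve "having a common vertex") shows $\{\phi(x_0),\dots,\phi(x_k)\}$ is a simplex of $X$, and symmetrically for $\phi^{-1}$. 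Finally $\mathcal{N}(\phi)$ agrees with $\psi$ on vertices of $\mathcal{N}(X)$ by construction (since $\phi$ sends the maximal simplex $\Sigma_x$ of $X$... more precisely, one checks $\phi$ carries maximal simplices of $X$ to maximal simplices of $X$ compatibly with $\psi$), so $\Phi$ is surjective, completing the proof.

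The step I expect to be the main obstacle is pinning down precisely the correspondence between maximal simplices of $\mathcal{N}(X)$ and vertices of $X$ — in particular verifying that $\Sigma_x$ is maximal in $\mathcal{N}(X)$ and that every maximal simplex of $\mathcal{N}(X)$ is of this form — since this is exactly where both hypotheses~(1) and~(2) are used, and where one must be careful that "maximal simplex" of a possibly infinite-dimensional complex like $\mathcal{N}(X)$ is being handled correctly (though here $X$ finite-dimensional keeps things tame). Once that bijection is established, checking that $\psi$ transports simplices of $X$ correctly is routine, using that simplices of $X$ are detected by common membership in a maximal simplex, i.e. by nonempty intersection of the corresponding $\Sigma$'s.
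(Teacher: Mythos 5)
Your proposal follows essentially the same route as the paper: the same homomorphism $\Aut(X)\to\Aut(\mathcal{N}(X))$, injectivity from condition~(1), and surjectivity via the bijection $x\leftrightarrow\Sigma_x$ between vertices of $X$ and maximal simplices of $\mathcal{N}(X)$, with simplices of $X$ detected by non-empty intersections of the corresponding $\Sigma_{x_i}$. The one step you assert rather than prove --- that a maximal simplex of $\mathcal{N}(X)$ (a possibly infinite collection) has a common vertex, and likewise that $M$ and all of $\Sigma_x$ share a vertex in your maximality argument --- is exactly where the paper exploits finite-dimensionality of $X$ (so that maximal simplices of $X$ are finite sets, making the directed family of finite intersections stabilise), and you correctly flag this as the delicate point.
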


\begin{lemma}
	\label{lem_simpnonemp}
	Let $x_0,\cdots,x_t\in X^{[0]}$. Then $\{x_0,\cdots,x_t\}$ is a simplex in $X$ if and only if $\bigcap_{i=0}^{t}\Sigma_{x_i}\neq\emptyset$. 
\end{lemma}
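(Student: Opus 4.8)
The plan is to prove both directions directly from the definition of $\Sigma_x$ as the collection of maximal simplices of $X$ containing the vertex $x$. Recall that $X$ is finite-dimensional, so every simplex of $X$ is contained in some maximal simplex, and each maximal simplex is genuinely an element of $X$.

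First I would prove the forward direction. Suppose $\{x_0,\dots,x_t\}$ is a simplex of $X$. Since $X$ is finite-dimensional, there is a maximal simplex $\Delta$ of $X$ with $\{x_0,\dots,x_t\}\subseteq\Delta$. Then for every $i$ we have $x_i\in\Delta$, so $\Delta\in\Sigma_{x_i}$; hence $\Delta\in\bigcap_{i=0}^t\Sigma_{x_i}$, which is therefore non-empty. Conversely, suppose $\bigcap_{i=0}^t\Sigma_{x_i}\neq\emptyset$ and pick a maximal simplex $\Delta$ in this intersection. Then $x_i\in\Delta$ for each $i$, so $\{x_0,\dots,x_t\}\subseteq\Delta$. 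Since $\Delta$ is a simplex of $X$ (here finite-dimensionality is used again, so that the maximal simplex $\Delta$ is an element of $X$) and $X$ is closed under taking non-empty subsets, $\{x_0,\dots,x_t\}$ is a simplex of $X$.

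There is essentially no obstacle here; the only point requiring care is that the statement implicitly assumes $X$ is finite-dimensional (which is standing hypothesis in this section), so that maximal simplices are themselves simplices of $X$ and every simplex extends to a maximal one. Without that hypothesis the forward direction could fail, since a simplex need not sit inside any maximal simplex that is an element of $X$. I would make this dependence explicit in the write-up.
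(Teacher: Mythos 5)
Your proof is correct and follows essentially the same route as the paper: extend the simplex to a maximal simplex for the forward direction, and take any element of the intersection for the converse. Your explicit remark about the standing finite-dimensionality hypothesis (so that maximal simplices are simplices of $X$ and every simplex extends to one) is a sensible point of care that the paper leaves implicit.
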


\begin{proof}
	Assume that $\{x_0,\cdots,x_t\}$ is a simplex in $X$. Then there is a maximal simplex $K$ in $X$ containing $\{x_0,\cdots,x_t\}$ as a face, and hence $K\in\bigcap_{i=0}^{t}\Sigma_{x_i}\neq\emptyset$.  Conversely, if $\bigcap_{i=0}^{t}\Sigma_{x_i}\neq\emptyset$ then there is  $L\in\bigcap_{i=0}^{t}\Sigma_{x_i}$ and such simplex $L$ of $X$ contains $\{x_0,\cdots,x_t\}$ as a face. 
\end{proof}

\begin{lemma}
	\label{lem_maxsimexi}
	Let $\Sigma$ be a maximal simplex of $\mathcal{N}(X)$. Then there is some $x\in X^{[0]}$ with $\Sigma=\Sigma_x$. Assume further that, for every $a,b\in X^{[0]}$, $\Sigma_a\subseteq\Sigma_b$ implies that $a=b$. Then for every $y\in X^{[0]}$, $\Sigma_y$ is a maximal simplex of $\mathcal{N}(X)$. 
\end{lemma}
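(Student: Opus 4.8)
The plan is to prove the two assertions in turn. For the first, let $\Sigma$ be a maximal simplex of $\mathcal{N}(X)$; by definition of $\mathcal{N}(X)$ its vertices are maximal simplices of $X$, and $\Sigma$ being a simplex of the nerve means $\bigcap_{K\in\Sigma}K\neq\emptyset$. Pick a vertex $x\in X^{[0]}$ lying in this intersection. Then every $K\in\Sigma$ contains $x$, so $\Sigma\subseteq\Sigma_x$. Now I would check that $\Sigma_x$ is itself a simplex of $\mathcal{N}(X)$: indeed all members of $\Sigma_x$ contain $x$, so $\bigcap_{K\in\Sigma_x}K\ni x$ is non-empty. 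Since $\Sigma$ is a maximal simplex and $\Sigma\subseteq\Sigma_x$ with $\Sigma_x$ a simplex, maximality forces $\Sigma=\Sigma_x$, as desired. (One should note $X$ is finite-dimensional so maximal simplices of $X$ are honest simplices, and $\Sigma_x\neq\emptyset$ since every vertex lies in at least one maximal simplex.)

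For the second assertion, assume the separation hypothesis: $\Sigma_a\subseteq\Sigma_b$ implies $a=b$. Fix $y\in X^{[0]}$. As observed above, $\Sigma_y$ is a simplex of $\mathcal{N}(X)$. Suppose it is not maximal; then there is a simplex $\Sigma'$ of $\mathcal{N}(X)$ with $\Sigma_y\subsetneq\Sigma'$, so $\Sigma'$ contains some maximal simplex $M$ of $X$ with $y\notin M$, while $\bigcap_{K\in\Sigma'}K\neq\emptyset$. Take $z$ in this latter intersection. Then every $K\in\Sigma'$ contains $z$; in particular every $K\in\Sigma_y$ contains $z$, i.e. $\Sigma_y\subseteq\Sigma_z$. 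By the hypothesis, $y=z$. But $z\in M$ whereas $y\notin M$, a contradiction. Hence $\Sigma_y$ is maximal.

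I do not expect a genuine obstacle here; the only subtlety is bookkeeping about the two distinct uses of the word \enquote{maximal simplex} (maximal simplices of $X$, which are the vertices of $\mathcal{N}(X)$, versus maximal simplices of $\mathcal{N}(X)$, which are what the lemma is about) and the remark in the definitions section that in infinite dimension a \enquote{maximal simplex} need not be a simplex — this is why finite-dimensionality of $X$ is used implicitly to guarantee each $\Sigma_x$ is non-empty and each maximal simplex of $X$ is an actual simplex containing its vertices. The argument for the second part is essentially the contrapositive of hypothesis (1) combined with the observation that enlarging $\Sigma_y$ inside $\mathcal{N}(X)$ would have to add a maximal simplex of $X$ missing $y$ yet still sharing a common vertex with everything in $\Sigma_y$.
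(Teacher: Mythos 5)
There is a genuine gap at the very first step of your argument: you assert that ``$\Sigma$ being a simplex of the nerve means $\bigcap_{K\in\Sigma}K\neq\emptyset$'', but under the paper's conventions this is not the definition. A simplex of $\mathcal{N}(X)$ is a \emph{finite} set of maximal simplices of $X$ with non-empty intersection, and a \emph{maximal} simplex of $\mathcal{N}(X)$ is a maximal collection $\Sigma$ with the property that every \emph{finite} subset is a simplex. Since $\mathcal{N}(X)$ is typically infinite-dimensional even when $X$ is finite-dimensional (for $X=\mathcal{C}(S)$, a fixed curve lies in infinitely many pants decompositions, so each $\Sigma_x$ is infinite), the maximal simplex $\Sigma$ is an infinite family that a priori only has the finite intersection property. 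The claim $\bigcap\Sigma\neq\emptyset$ is the actual content of the first assertion, and it is where finite-dimensionality of $X$ does real work: the paper takes $K=\{x_0,\cdots,x_d\}\in\Sigma$ (a finite vertex set because $X$ is finite-dimensional), observes that if $\bigcap\Sigma=\emptyset$ then each $x_i$ is missed by some $L_i\in\Sigma$, and concludes that the finite subfamily $\{K,L_0,\cdots,L_d\}\subseteq\Sigma$ has empty intersection, contradicting the defining property of $\Sigma$. Your appeal to finite-dimensionality is aimed at the wrong place (at maximal simplices of $X$ being honest simplices and at $\Sigma_x$ being non-empty), not at this compactness-type step.

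The same gap propagates to your second assertion: you pass from $\Sigma_y\subsetneq\Sigma'$ to a point $z\in\bigcap_{K\in\Sigma'}K$, which is again an infinite intersection whose non-emptiness you have not justified. Both parts are repaired along the paper's route: first prove $\bigcap\Sigma\neq\emptyset$ for a maximal $\Sigma$ as above; then, for the second part, extend the simplex-candidate $\Sigma_y$ to a maximal simplex $\Phi$ of $\mathcal{N}(X)$, apply the first part to write $\Phi=\Sigma_z$, and deduce $y=z$ from the hypothesis $\Sigma_y\subseteq\Sigma_z$, so that $\Sigma_y=\Phi$ is maximal. Once non-emptiness of the total intersection is available for \emph{maximal} collections, your contradiction argument for the second part goes through provided you choose $\Sigma'$ maximal.
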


\begin{proof}
	If $\bigcap\Sigma\neq\emptyset$, then there is $x\in X^{[0]}$ such that $x\in L$ for every $L\in\Sigma$, so $\Sigma\subseteq\Sigma_x$, and then $\Sigma=\Sigma_x$ by maximality of $\Sigma$. Therefore, it suffices to show that $\bigcap\Sigma\neq\emptyset$. Assume for a contradiction that $\bigcap\Sigma=\emptyset$. Let $K\in\Sigma$ and suppose $K=\{x_0,\cdots,x_d\}$. Then for every $0\le i\le d$, there is $L_i\in\Sigma$ such that $x_i\notin L_i$. This means $\{K,L_0,\cdots,L_d\}\subseteq\Sigma$ is a collection of maximal simplices in $X$ such that $(\bigcap_{i=0}^dL_i)\bigcap K=\emptyset$, and hence $\Sigma$ is not a maximal simplex in $\mathcal{N}(X)$, a contradiction. 
	
	Suppose that, for every $a,b\in X^{[0]}$, $\Sigma_a\subseteq\Sigma_b$ implies that $a=b$. Let $y\in X^{[0]}$. Since $\Sigma_y$ is a simplex of $\mathcal{N}(X)$, there is a maximal simplex $\Phi$ with $\Sigma_y\subseteq\Phi$, so $\Phi=\Sigma_z$ for some $z\in X^{[0]}$. Hence $y=z$. 
\end{proof}

\begin{proof}[Proof of Theorem~\ref{thm_nervenerve}]
	Define $\beta\colon X\to\mathcal{N}(\mathcal{N}(X))$ by $\beta(x)=\Sigma_x$. By Lemma~\ref{lem_maxsimexi}, the function $\beta$ is well-defined and surjective. By assumption, $\Sigma_x=\Sigma_y$ if and only if $x=y$, therefore the map $\beta$ is injective and hence a bijection. It remains to show that $\beta$ and $\beta^{-1}$ are simplicial. Let $\tau=\{x_0,\cdots,x_r\}$ be a set of vertices of $X$. By Lemma~\ref{lem_simpnonemp}, $\tau$ is a simplex of $X$ if and only if $\bigcap_{i=0}^r\Sigma_{x_i}\neq\emptyset$. On the other hand, $\{\Sigma_{x_0},\cdots,\Sigma_{x_r}\}$ is a simplex of $\mathcal{N}(\mathcal{N}(X))$ if and only if $\bigcap_{i=0}^r\Sigma_{x_i}\neq\emptyset$. Therefore $\beta$ and $\beta^{-1}$ are simplicial, in particular, $\beta$ is a simplicial isomorphism. 
\end{proof}

Now we prove Theorem~\ref{thm_assumpautxiso}. Define  
\[\Omega\colon\Aut(X)\to\Aut(\mathcal{N}(X)), \qquad\Omega(\varphi)=\varphi_*,\]
where for each $\varphi\in\Aut(X)$, the map $\varphi_*\colon\mathcal{N}(X)\to\mathcal{N}(X)$ is given by $\varphi_*(K)=\varphi(K)$. The function $\Omega$ is well-defined by the following lemma.

\begin{lemma}
	If $\varphi\in\Aut(X)$, then $\varphi_*\colon\mathcal{N}(X)\to\mathcal{N}(X)$ is a simplicial automorphism.
\end{lemma}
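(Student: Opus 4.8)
The plan is to verify, in turn, that $\varphi_*$ is a well-defined self-map of the vertex set of $\mathcal{N}(X)$, that it is a bijection, and that both $\varphi_*$ and its inverse are simplicial.

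First I would check that $\varphi$ carries maximal simplices of $X$ to maximal simplices of $X$. Since $\varphi\in\Aut(X)$, both $\varphi$ and $\varphi^{-1}$ are simplicial, so for every finite set $F$ of vertices of $X$ one has $F\in X$ if and only if $\varphi(F)\in X$; in particular $\varphi$ preserves the partial order on simplices given by inclusion. Consequently, if $K$ is a maximal simplex of $X$ then $\varphi(K)$ is again a maximal simplex (here we use that $X$ is finite-dimensional, so maximal simplices are genuine simplices of $X$). Hence $\varphi_*$ maps the vertex set of $\mathcal{N}(X)$, which is precisely the set of maximal simplices of $X$, into itself. Applying the same reasoning to $\varphi^{-1}$ shows that $(\varphi^{-1})_*$ is also a well-defined self-map of this vertex set, and since $(\varphi^{-1})_*\circ\varphi_*$ and $\varphi_*\circ(\varphi^{-1})_*$ are both the identity on vertices, $\varphi_*$ is a bijection with inverse $(\varphi^{-1})_*$.

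Next I would show that $\varphi_*$ is simplicial. Let $\{K_0,\dots,K_t\}$ be a simplex of $\mathcal{N}(X)$, that is, $\bigcap_{i=0}^{t}K_i\neq\emptyset$. Since $\varphi$ is a bijection on the vertex set of $X$, it commutes with intersections of subsets of vertices, so $\bigcap_{i=0}^{t}\varphi(K_i)=\varphi\bigl(\bigcap_{i=0}^{t}K_i\bigr)\neq\emptyset$, which means $\{\varphi_*(K_0),\dots,\varphi_*(K_t)\}$ is a simplex of $\mathcal{N}(X)$. The identical argument applied to $\varphi^{-1}$ shows that $\varphi_*^{-1}=(\varphi^{-1})_*$ is simplicial. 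Therefore $\varphi_*$ is a simplicial automorphism of $\mathcal{N}(X)$.

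There is no substantial obstacle here: the argument is a direct unravelling of definitions. The only point requiring a moment's care is that one must use both that $\varphi$ is a simplicial bijection and that $\varphi^{-1}$ is simplicial — equivalently, that $\varphi$ is a genuine isomorphism and not merely an injective simplicial self-map — in order to conclude that maximality of simplices is preserved.
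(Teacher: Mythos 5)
Your proof is correct and follows essentially the same route as the paper's: the paper likewise notes that $\varphi_*$ is a bijection, verifies simpliciality via $\bigcap_{i}\varphi(K_i)=\varphi\bigl(\bigcap_{i}K_i\bigr)\neq\emptyset$, and handles the inverse through $(\varphi_*)^{-1}=(\varphi^{-1})_*$. You merely spell out in more detail the preliminary point (left implicit in the paper) that $\varphi$ permutes the maximal simplices of the finite-dimensional complex $X$, which is a reasonable addition.
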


\begin{proof}
	Note that $\varphi_*$ is a bijection. It remains to show that $\varphi_*$ and $(\varphi_*)^{-1}$ are simplicial maps. Let $L=\{K_0,\cdots,K_t\}$ be a simplex in $\mathcal{N}(X)$. Then $\bigcap_{i=0}^tK_i\neq\emptyset$. Since $\varphi$ is bijective, we have $\bigcap_{i=0}^t\varphi(K_i)=\varphi(\bigcap_{i=0}^tK_i)\neq\emptyset$, so $\varphi_*(L)$ is a simplex in $\mathcal{N}(X)$. This shows that $\varphi_*$ is a simplicial map. Since $(\varphi_*)^{-1}=(\varphi^{-1})_*$, we have that $(\varphi_*)^{-1}$ is simplicial.
\end{proof}

\begin{remark}
	\label{lem_phisigxsigphix}
	If $\varphi\in\Aut(X)$ and $x$ is a vertex of $X$, then $\varphi_*(\Sigma_x)=\Sigma_{\varphi(x)}$.
\end{remark}


For a simplex $\tau$ of $X$, denote $\Sigma_\tau$ as the collection of all maximal simplices of $X$ that contain $\tau$ as a face.

\begin{lemma}
	\label{lem_inttpts}
	If the second condition of Theorem~\ref{thm_assumpautxiso} holds, $\bigcap\Sigma_\tau=\tau$ for every simplex $\tau$ in $X$. In particular, $\bigcap\Sigma_x=\{x\}$.
\end{lemma}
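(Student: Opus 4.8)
The plan is to show the two inclusions $\tau\subseteq\bigcap\Sigma_\tau$ and $\bigcap\Sigma_\tau\subseteq\tau$ separately. The first inclusion is immediate from the definition: every element of $\Sigma_\tau$ is a maximal simplex containing $\tau$ as a face, so every vertex of $\tau$ lies in every member of $\Sigma_\tau$, hence $\tau\subseteq\bigcap\Sigma_\tau$. (We should also note $\Sigma_\tau$ is non-empty, since $\tau$ is a simplex and $X$ is finite-dimensional, so $\tau$ extends to a maximal simplex; this makes $\bigcap\Sigma_\tau$ well-defined.)

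For the reverse inclusion, I would argue by contraposition using the second hypothesis of Theorem~\ref{thm_assumpautxiso}. Suppose $x$ is a vertex with $x\in\bigcap\Sigma_\tau$ but $x\notin\tau$. Since $\tau$ is a simplex of $X$ and $x\notin\tau$, hypothesis (2) of Theorem~\ref{thm_assumpautxiso} gives a maximal simplex $L$ with $\tau\subseteq L$ and $x\notin L$. But then $L\in\Sigma_\tau$ while $x\notin L$, contradicting $x\in\bigcap\Sigma_\tau$. Hence every vertex of $\bigcap\Sigma_\tau$ lies in $\tau$, giving $\bigcap\Sigma_\tau\subseteq\tau$. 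Combining the two inclusions yields $\bigcap\Sigma_\tau=\tau$. The statement $\bigcap\Sigma_x=\{x\}$ is then the special case $\tau=\{x\}$.

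There is no real obstacle here: the entire content is packaged into hypothesis (2), which is precisely the separation property needed to prevent the intersection from being larger than $\tau$. The only point requiring a moment's care is the non-emptiness of $\Sigma_\tau$, which uses finite-dimensionality of $X$ so that a maximal simplex containing $\tau$ genuinely exists (and is itself a simplex of $X$).

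\begin{proof}
Since $X$ is finite-dimensional, the simplex $\tau$ is contained in some maximal simplex of $X$, so $\Sigma_\tau\neq\emptyset$ and $\bigcap\Sigma_\tau$ is well-defined. Every $L\in\Sigma_\tau$ contains $\tau$ as a face, so $\tau\subseteq\bigcap\Sigma_\tau$. Conversely, suppose $x$ is a vertex of $X$ with $x\in\bigcap\Sigma_\tau$ but $x\notin\tau$. By the second condition of Theorem~\ref{thm_assumpautxiso}, there is a maximal simplex $L$ of $X$ with $\tau\subseteq L$ and $x\notin L$. Then $L\in\Sigma_\tau$, contradicting $x\in\bigcap\Sigma_\tau$. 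Hence $\bigcap\Sigma_\tau\subseteq\tau$, and therefore $\bigcap\Sigma_\tau=\tau$. Taking $\tau=\{x\}$ gives $\bigcap\Sigma_x=\{x\}$.
\end{proof}
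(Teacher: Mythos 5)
Your proof is correct and follows essentially the same argument as the paper: the inclusion $\tau\subseteq\bigcap\Sigma_\tau$ is immediate, and the reverse inclusion is obtained by contradiction using hypothesis (2) to separate any extraneous vertex from $\tau$ by a maximal simplex. The only difference is your (harmless and slightly more careful) remark that $\Sigma_\tau\neq\emptyset$ by finite-dimensionality, which the paper leaves implicit.
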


\begin{proof}
	Observe that $\tau\subseteq\bigcap\Sigma_\tau$. To show that $\bigcap\Sigma_\tau\subseteq\tau$, assume for a contradiction that there is $a\in\bigcap\Sigma_\tau$ such that $a\notin\tau$. By assumption on $X$, there is a maximal simplex $L$ in $X$ with $\tau\subseteq L$ and $a\notin L$. Since $L\in\Sigma_\tau$, we have that $a\notin\bigcap\Sigma_\tau$, a contradiction. 
\end{proof}

\begin{prop}
	\label{prop_omeinj}
	Suppose that for every $x,y\in X^{[0]}$, $\Sigma_x\subseteq\Sigma_y$ implies that $x=y$. Then $\Omega$ is injective.
\end{prop}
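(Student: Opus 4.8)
The plan is to prove that $\Omega$ is injective by showing it has trivial kernel, which is enough because $\Omega$ is a group homomorphism: for $\varphi,\psi\in\Aut(X)$ and every maximal simplex $K$ of $X$ one has $(\varphi\circ\psi)_*(K)=(\varphi\circ\psi)(K)=\varphi\bigl(\psi(K)\bigr)=\varphi_*\bigl(\psi_*(K)\bigr)$, so that $\Omega(\varphi\circ\psi)=\Omega(\varphi)\circ\Omega(\psi)$. Thus it suffices to take $\varphi\in\Aut(X)$ with $\Omega(\varphi)=\varphi_*=\mathrm{id}_{\mathcal{N}(X)}$ and conclude $\varphi=\mathrm{id}_X$.

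The key step is to unwind what $\varphi_*=\mathrm{id}_{\mathcal{N}(X)}$ says. The vertex set of $\mathcal{N}(X)$ is the collection of maximal simplices of $X$, and $\varphi_*$ being the identity means $\varphi_*(K)=\varphi(K)=K$ \emph{as a set of vertices of $X$} for every maximal simplex $K$ of $X$. Then, for an arbitrary vertex $x$ of $X$, I would combine this with Remark~\ref{lem_phisigxsigphix}: on one hand $\varphi_*(\Sigma_x)=\Sigma_{\varphi(x)}$, and on the other hand $\varphi_*(\Sigma_x)=\{\varphi(K)\mid K\in\Sigma_x\}=\Sigma_x$ since each $K\in\Sigma_x$ is fixed by $\varphi$. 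Hence $\Sigma_{\varphi(x)}=\Sigma_x$, so in particular $\Sigma_{\varphi(x)}\subseteq\Sigma_x$, and the separation hypothesis gives $\varphi(x)=x$. Since $x$ was arbitrary, $\varphi$ fixes every vertex of $X$, so $\varphi=\mathrm{id}_X$, as desired.

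There is no real obstacle here; the argument is essentially formal once the definitions are unpacked. The one subtlety worth flagging is that $\varphi_*=\mathrm{id}_{\mathcal{N}(X)}$ only forces $\varphi$ to fix each maximal simplex \emph{setwise}, not pointwise, so one cannot conclude $\varphi=\mathrm{id}_X$ directly; passing through the subsets $\Sigma_x\subseteq X^{[0]}$ and invoking the hypothesis $\Sigma_{\varphi(x)}\subseteq\Sigma_x\Rightarrow\varphi(x)=x$ is exactly what bridges this gap. (This is also the same place where the hypothesis of the proposition — and of condition (1) of Theorem~\ref{thm_assumpautxiso} — is genuinely used.)
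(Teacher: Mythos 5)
Your proof is correct, and it takes a slightly different (and in one respect cleaner) route than the paper's. The paper does not reduce to the kernel: it takes $\varphi,\psi$ with $\varphi_*=\psi_*$ and computes, for each vertex $x$, that $\{\varphi(x)\}=\bigcap\Sigma_{\varphi(x)}=\bigcap\varphi_*(\Sigma_x)=\bigcap\psi_*(\Sigma_x)=\{\psi(x)\}$, using Remark~\ref{lem_phisigxsigphix} together with Lemma~\ref{lem_inttpts}, which asserts $\bigcap\Sigma_x=\{x\}$. Your argument instead uses the homomorphism property of $\Omega$ to reduce to triviality of the kernel and then applies the separation hypothesis directly to the identity $\Sigma_{\varphi(x)}=\varphi_*(\Sigma_x)=\Sigma_x$. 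The two are logically very close (note that $\bigcap\Sigma_x=\{x\}$ is itself an immediate consequence of the separation hypothesis: any $a\in\bigcap\Sigma_x$ satisfies $\Sigma_x\subseteq\Sigma_a$, hence $a=x$), but your route has the small advantage of staying entirely within the stated hypothesis of the proposition, whereas Lemma~\ref{lem_inttpts} as written is derived from condition (2) of Theorem~\ref{thm_assumpautxiso}, which the proposition does not assume. Your closing remark correctly identifies the one genuine subtlety --- that $\varphi_*=\mathrm{id}$ only fixes maximal simplices setwise --- and the bridge via the sets $\Sigma_x$ is exactly the right fix.
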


\begin{proof}
	Let $\varphi,\psi\in\Aut(X)$. Suppose that $\varphi_*=\psi_*$. Let $x\in X^{[0]}$. By Lemma~\ref{lem_inttpts}, we have $\bigcap\Sigma_x=\{x\}$. Then by Remark \ref{lem_phisigxsigphix}, we have \[\{\varphi(x)\}=\bigcap\Sigma_{\varphi(x)}=\bigcap\varphi_*(\Sigma_x)=\varphi_*\left(\bigcap\Sigma_x\right)=\psi_*\left(\bigcap\Sigma_x\right)=\bigcap\psi_*(\Sigma_x)=\bigcap\Sigma_{\psi(x)}=\{\psi(x)\}.\] Since $X$ is abstract simplicial complex, this shows that $\varphi=\psi$.
\end{proof}

\begin{prop}
	\label{prop_omesurj}
	Suppose that $X$ satisfies two conditions in Theorem~\ref{thm_assumpautxiso}. Then $\Omega$ is surjective.
\end{prop}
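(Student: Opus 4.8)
The plan is to construct an explicit preimage under $\Omega$ for an arbitrary $\Psi \in \Aut(\mathcal{N}(X))$. The key observation is that, by Lemma~\ref{lem_maxsimexi} together with condition (1), the vertices of $\mathcal{N}(X)$ are exactly the sets $\Sigma_x$ for $x \in X^{[0]}$, and the assignment $x \mapsto \Sigma_x$ is a bijection from $X^{[0]}$ onto the vertex set of $\mathcal{N}(X)$; moreover every $\Sigma_x$ is a \emph{maximal} simplex of $\mathcal{N}(X)$. So given $\Psi \in \Aut(\mathcal{N}(X))$, for each $x \in X^{[0]}$ the image $\Psi(\Sigma_x)$ is again a maximal simplex of $\mathcal{N}(X)$ (automorphisms send maximal simplices to maximal simplices), hence $\Psi(\Sigma_x) = \Sigma_{x'}$ for a unique $x' \in X^{[0]}$. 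Define $\varphi \colon X^{[0]} \to X^{[0]}$ by $\varphi(x) = x'$, i.e.\ $\Sigma_{\varphi(x)} = \Psi(\Sigma_x)$. Since $\Psi^{-1}$ is also an automorphism of $\mathcal{N}(X)$, the same construction applied to $\Psi^{-1}$ produces a two-sided inverse for $\varphi$, so $\varphi$ is a bijection of $X^{[0]}$.

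Next I would check that $\varphi$ is a simplicial automorphism of $X$. Let $\tau = \{x_0,\dots,x_t\}$ be a simplex of $X$. By Lemma~\ref{lem_simpnonemp}, this is equivalent to $\bigcap_{i=0}^t \Sigma_{x_i} \neq \emptyset$, which says precisely that $\{\Sigma_{x_0},\dots,\Sigma_{x_t}\}$ is a simplex of $\mathcal{N}(X)$. Applying the automorphism $\Psi$, the set $\{\Psi(\Sigma_{x_0}),\dots,\Psi(\Sigma_{x_t})\} = \{\Sigma_{\varphi(x_0)},\dots,\Sigma_{\varphi(x_t)}\}$ is a simplex of $\mathcal{N}(X)$, so $\bigcap_{i=0}^t \Sigma_{\varphi(x_i)} \neq \emptyset$, and by Lemma~\ref{lem_simpnonemp} again $\{\varphi(x_0),\dots,\varphi(x_t)\}$ is a simplex of $X$. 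The same argument with $\Psi^{-1}$ and $\varphi^{-1}$ shows the converse, so $\varphi \in \Aut(X)$.

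Finally I would verify $\Omega(\varphi) = \varphi_* = \Psi$. Both are automorphisms of $\mathcal{N}(X)$, so it suffices to check they agree on vertices of $\mathcal{N}(X)$, i.e.\ on maximal simplices of $X$. Let $K$ be a maximal simplex of $X$. By Lemma~\ref{lem_maxsimexi}, $K = \Sigma_x$ is false in general — rather, $K$ itself is a maximal simplex and $\bigcap \Sigma_K = K$ by Lemma~\ref{lem_inttpts}, where $\Sigma_K$ is the set of maximal simplices containing $K$; but since $K$ is maximal, $\Sigma_K = \{K\}$, so this just says $K = K$. Instead, write $K = \{x_0,\dots,x_d\}$; then $K = \bigcap_{i=0}^d \Sigma_{x_i}$ by Lemma~\ref{lem_inttpts} applied to the vertices (more precisely $\bigcap_{i=0}^{d}\Sigma_{x_i} = \Sigma_K$ and $\Sigma_K = \{K\}$, giving $K$ as the unique common element; one checks $K \in \Sigma_{x_i}$ for each $i$ and that any other common element would contradict maximality of $K$). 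Now
\[
\varphi_*(K) = \varphi(K) = \{\varphi(x_0),\dots,\varphi(x_d)\},
\]
and I claim this equals $\Psi(K)$. Using Remark~\ref{lem_phisigxsigphix}-style reasoning: $\Psi(K)$ is the unique maximal simplex lying in $\bigcap_{i=0}^d \Psi(\Sigma_{x_i}) = \bigcap_{i=0}^d \Sigma_{\varphi(x_i)}$, and by Lemma~\ref{lem_inttpts} that intersection's common vertex set is exactly $\{\varphi(x_0),\dots,\varphi(x_d)\} = \varphi(K)$, which is itself a maximal simplex by the previous paragraph. Hence $\Psi(K) = \varphi(K) = \varphi_*(K)$, completing the proof that $\Omega$ is surjective.

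The main obstacle I anticipate is the last step: cleanly identifying $\Psi(K)$ with $\varphi(K)$ as maximal simplices of $X$. The subtlety is that an automorphism of $\mathcal{N}(X)$ is defined only on the level of maximal simplices of $X$, so recovering how it acts on individual vertices requires the rigidity provided by conditions (1) and (2) — condition (1) to make $x \mapsto \Sigma_x$ injective on vertices and to ensure every $\Sigma_y$ is maximal, and condition (2) (via Lemma~\ref{lem_inttpts}) to recover a vertex as the intersection of the maximal simplices through it. Bookkeeping the passage back and forth between "simplex of $X$'' and "simplex of $\mathcal{N}(X)$'' via Lemma~\ref{lem_simpnonemp}, while keeping track of which intersections are taken inside $X^{[0]}$ versus inside the vertex set of $\mathcal{N}(X)$, is where care is needed, but no deep new idea beyond the two stated conditions should be required.
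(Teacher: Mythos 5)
Your proposal is correct and follows essentially the same route as the paper: define $\varphi$ by $\Psi(\Sigma_x)=\Sigma_{\varphi(x)}$ using Lemma~\ref{lem_maxsimexi} and Lemma~\ref{lem_inttpts}, check bijectivity and simpliciality via Lemma~\ref{lem_simpnonemp}, and recover $\Psi=\varphi_*$ on maximal simplices by intersecting the $\Sigma_{\varphi(x_i)}$. The only difference is cosmetic bookkeeping in the final step, where the paper more directly computes $g(\sigma)=\bigcap_i g(\Sigma_{v_i})=\{\varphi(v_0),\dots,\varphi(v_t)\}$.
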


\begin{proof}
	Let $g\in\Aut(\mathcal{N}(X))$. Lemma~\ref{lem_maxsimexi} shows that every maximal simplex in $\mathcal{N}(X)$ is of the form $\Sigma_a$ for some $a\in X^{[0]}$, and note that $g$ maps maximal simplices to maximal simplices bijectively. Then for each $x\in X^{[0]}$, there is $y\in X^{[0]}$ such that $g(\Sigma_x)=\Sigma_{y}$. This vertex $y$ is unique since $\bigcap g(\Sigma_x)=\bigcap\Sigma_{y}=\{y\}$ by Lemma~\ref{lem_inttpts}. Define $\varphi\colon X\to X$ by $\varphi(x)=y$. 
	
	We first show that $\varphi$ is bijective. Let $x,y\in X^{[0]}$ and suppose that $\varphi(x)=\varphi(y)$. Note that $g(\Sigma_x)=\Sigma_{\varphi(x)}$ and $\Sigma_{\varphi(y)}=g(\Sigma_y)$, so $\Sigma_x=\Sigma_y$ since $g$ is bijective, and then $x=y$ by our assumption. This proves that $\varphi$ is injective. For each $a\in X^{[0]}$, there is $b\in X^{[0]}$ such that $g^{-1}(\Sigma_a)=\Sigma_b$, then $g(\Sigma_b)=\Sigma_a$, so $\varphi(b)=a$. This proves that $\varphi$ is surjective. 
	
	We now verify that $\varphi$ and $\varphi^{-1}$ are simplicial maps. Let $\{x_0,\cdots,x_k\}$ be a simplex in $X$. Then by Lemma~\ref{lem_simpnonemp}, we have $\bigcap_{i=0}^k\Sigma_{x_i}\neq\emptyset$. Since $g$ is bijective, we have \[\bigcap_{i=0}^k\Sigma_{\varphi(x_i)}=\bigcap_{i=0}^kg(\Sigma_{x_i})=g\left(\bigcap_{i=0}^k\Sigma_{x_i}\right)\neq\emptyset,\] so $\{\varphi(x_0),\cdots,\varphi(x_k)\}$ is a simplex in $X$ by Lemma~\ref{lem_simpnonemp}. Hence $\varphi$ is a simplicial map. Similar argument on $\varphi^{-1}$ shows that it is a simplicial map. Therefore, we have $\varphi\in\Aut(X)$. 
	
	It remains to show that $\varphi_*=g$. Let $\sigma=\{v_0,\cdots,v_t\}$ be a maximal simplex of $X$. Then $\varphi_*(\sigma)=\varphi(\sigma)=\{\varphi(v_0),\cdots,\varphi(v_t)\}$, which is a maximal simplex in $X$. Then by Lemma~\ref{lem_inttpts}, \[g(\sigma)=g\left(\bigcap_{i=0}^t\Sigma_{v_i}\right)=\bigcap_{i=0}^tg(\Sigma_{v_i})=\bigcap_{i=0}^t\Sigma_{\varphi(v_i)}=\{\varphi(v_0),\cdots,\varphi(v_t)\}.\] This shows that $\varphi_*=g$.
\end{proof}

\begin{proof}[Proof of Theorem~\ref{thm_assumpautxiso}]
	This follows from Propositions~\ref{prop_omeinj} and \ref{prop_omesurj}. 
\end{proof}

To prove Theorem~\ref{thm_combequi}(2) and Theorem~\ref{thm_metacicmodsT}, it suffices to show the following proposition.

\begin{prop}
	\label{prop_ccsp}
	The curve complex $\mathcal{C}(S)$ satisfies the following two properties. 
	\begin{enumerate}
		\item For every pair of vertices $x,y$ in $\mathcal{C}(S)$, $\Sigma_x\subseteq\Sigma_y$ implies that $x=y$.
		\item If $K$ is a simplex of $\mathcal{C}(S)$ and $x$ is a vertex of $\mathcal{C}(S)$ such that $x\notin K$, then there is a maximal simplex $L$ of $\mathcal{C}(S)$ such that $K\subseteq L$ and $x\notin L$.
	\end{enumerate}
\end{prop}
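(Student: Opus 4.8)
The plan is to deduce property (1) from property (2) and then to prove (2) by a change-of-coordinates argument on pants decompositions. For (1): if $x,y$ are vertices of $\mathcal{C}(S)$ with $x\neq y$, I would apply (2) to the simplex $K=\{x\}$ and the vertex $y$ (which does not lie in $K$, since $y\neq x$); this yields a maximal simplex $L$ with $x\in L$ and $y\notin L$, that is, $L\in\Sigma_x\setminus\Sigma_y$, so $\Sigma_x\not\subseteq\Sigma_y$. Taking the contrapositive gives (1). So the content is entirely in (2).

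To prove (2), let $K$ be a simplex of $\mathcal{C}(S)$ and $x$ a vertex with $x\notin K$, and split into two cases. If $i(x,k)\neq 0$ for some $k\in K$, extend the multicurve $K$ to any pants decomposition $L$; then every curve of $L$ is disjoint from $k$, whereas $x$ is not, so $x\notin L$, and $L$ is the desired maximal simplex. Now suppose $i(x,k)=0$ for all $k\in K$. Since pairwise disjoint simple closed curves can be realised simultaneously disjointly (the curve complex is a flag complex), $K\cup\{x\}$ is again a multicurve; I extend it to a pants decomposition $P$, so that $K\subseteq P\setminus\{x\}$ and $x\in P$. The idea is to ``flip $x$ away'': let $R$ be the component of the surface obtained by cutting $S$ along $P\setminus\{x\}$ that contains $x$. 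Then $R$ is the union, along $x$, of the one or two pairs of pants of $S\setminus P$ that are adjacent to $x$; an Euler-characteristic count shows $R$ is either a four-holed sphere or a one-holed torus, so $R$ has complexity one. In particular $x$ is a vertex of the curve complex $\mathcal{C}(R)$, which has at least two vertices, so I may choose a vertex $x'$ of $\mathcal{C}(R)$ with $x'\neq x$ in $\mathcal{C}(R)$. Because $R$ is an essential subsurface of $S$ (its boundary curves lie in the pants decomposition $P$, hence are pairwise non-isotopic essential curves), the inclusion $\mathcal{C}(R)\hookrightarrow\mathcal{C}(S)$ is injective, so $x'\neq x$ as isotopy classes in $S$; moreover $x'$ is disjoint from $P\setminus\{x\}$ since it lies in the interior of $R$. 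As $R$ has complexity one, the single curve $x'$ is a pants decomposition of $R$, and hence $L:=(P\setminus\{x\})\cup\{x'\}$ is a pants decomposition of $S$, i.e., a maximal simplex of $\mathcal{C}(S)$. It satisfies $K\subseteq P\setminus\{x\}\subseteq L$ and $x\notin L$, which completes (2).

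Most of the ingredients here are routine and standard (every multicurve extends to a pants decomposition; the curve complex is flag; the identification of $R$ via Euler characteristic), and can be cited from \cite{MR2850125}. The one input doing real work — and the step I would be most careful about — is the injectivity of the inclusion $\mathcal{C}(R)\hookrightarrow\mathcal{C}(S)$ for the essential subsurface $R$: that a non-peripheral simple closed curve of $R$ is never isotopic in $S$ to a different simple closed curve contained in $R$. This is a standard fact about essential subsurfaces, but it should be quoted precisely; alternatively, one can choose $x'$ so that it intersects $x$ inside $R$ and invoke the (equivalent) statement that the inclusion of an essential subsurface does not decrease geometric intersection numbers.
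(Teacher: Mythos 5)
Your proof is correct, but it is organised differently from the paper's. The paper proves both properties directly from Lemma~\ref{lem_elemove} (quoted from \cite[Proposition 2.3]{MR2952767}): for property (1) it takes $A\in\Sigma_a$, writes $A=\{a,c_2,\cdots,c_{3g-4},b\}$ and uses that lemma to replace $b$ by a curve $c$ with $i(c,b)>0$ that is disjoint from the remaining curves, producing a maximal simplex in $\Sigma_a$ but not in $\Sigma_b$; for property (2), in the case $i(x,k)=0$ for all $k\in K$, it extends $K\cup\{x\}$ to a maximal simplex and applies the same lemma to swap out $x$. You instead (a) deduce (1) formally from (2) by taking $K=\{x\}$, which is a genuine and valid streamlining, and (b) prove the swap in (2) by hand: cut along $P\setminus\{x\}$, identify the complexity-one component $R$ containing $x$, and replace $x$ by another vertex $x'$ of $\mathcal{C}(R)$. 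This is essentially the standard proof of Lemma~\ref{lem_elemove} in the special case of a maximal simplex, so you have traded a citation for a self-contained construction whose only non-routine input is exactly the one you flag: that the inclusion $\mathcal{C}(R)\hookrightarrow\mathcal{C}(S)$ of the essential subsurface $R$ is injective, or equivalently that geometric intersection numbers of curves contained in $R$ agree when computed in $R$ and in $S$. That input is needed twice, to get $x'\neq x$ in $S$ and to see that $x'$ is not isotopic in $S$ to any curve of $P\setminus\{x\}$, and your proposed alternative discharges both at once: any vertex $x'\neq x$ of the complexity-one surface $R$ satisfies $i(x,x')>0$, which rules out $x'$ being isotopic to $x$ or to any curve disjoint from $x$, and also guarantees $x\notin L$. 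With that point made precise (and citing \cite{MR2850125} for the subsurface facts), your argument is complete; the paper's version is shorter only because it outsources this construction to the cited lemma.
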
 

To prove this proposition, we need the following lemma. 

\begin{lemma}[{\cite[Proposition 2.3]{MR2952767}}]
	\label{lem_elemove}
	Let $C=\{a_1,\cdots,a_k\}$ be a simplex in $\mathcal{C}(S)$. Then for every $1\le t\le k$, there is a vertex $b$ of $\mathcal{C}(S)$ such that $i(b,a_t)>0$ and $i(b,a)=0$ for every $a\in C-\{a_t\}$. 
\end{lemma}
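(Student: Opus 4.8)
The plan is to produce the curve $b$ by cutting $S$ along every curve of $C$ except $a_t$ and finding a transverse curve to $a_t$ inside the resulting surface. The point is that any simple closed curve lying in the complement of $C-\{a_t\}$ automatically has $i(b,a)=0$ for all $a\in C-\{a_t\}$, so the whole task reduces to exhibiting such a curve that genuinely crosses $a_t$. To keep the relevant piece small and explicit, I would first extend the simplex $C$ to a pants decomposition $P=\{a_1,\dots,a_{3g-3}\}$ of $S$ with $C\subseteq P$; this is possible because every simplex of $\mathcal{C}(S)$ is a face of a maximal simplex, and maximal simplices correspond to pants decompositions.

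Next I would cut $S$ along $P-\{a_t\}$ and let $Y$ be the component of the cut surface containing $a_t$. Since $P$ is a pants decomposition, $Y$ is obtained from the one or two pairs of pants adjacent to $a_t$ by regluing along $a_t$ alone. An Euler characteristic count then shows that $Y$ is either a one-holed torus (when a single pair of pants is glued to itself along $a_t$) or a four-holed sphere (when two distinct pairs of pants are glued along $a_t$). In both of these small surfaces a transverse curve is immediate: in the one-holed torus $a_t$ is non-separating and admits a dual curve $b$ with $i(a_t,b)=1$; in the four-holed sphere $a_t$ separates the four boundary circles into two pairs, and a curve $b$ realising the complementary pairing has $i(a_t,b)=2$.

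Finally I would transport $b$ back to $S$. Because $b$ crosses $a_t$ it is non-peripheral in $Y$, and it bounds no disk, so $b$ is an essential simple closed curve in $S$, i.e.\ a vertex of $\mathcal{C}(S)$. As $b$ is contained in the complement of $P-\{a_t\}\supseteq C-\{a_t\}$, it is disjoint from every $a\in C-\{a_t\}$, while $i(b,a_t)>0$ by construction, which is exactly the conclusion.

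I expect the only delicate point to be the verification that $a_t$ survives as an \emph{essential, non-peripheral} curve in $Y$: it bounds no disk since it is essential in $S$, and it is not boundary-parallel in $Y$ because $\partial Y$ consists of copies of the curves in $P-\{a_t\}$, whose isotopy classes are all distinct from that of $a_t$. This is where the distinctness of the classes in the pants decomposition is essential (a boundary-parallel $a_t$ would force an isotopy to some other $a_i$). The remaining step is a routine case analysis on the two elementary surfaces $Y$, so it should present no difficulty.
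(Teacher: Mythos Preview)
Your argument is correct and is exactly the standard elementary-move construction: extend to a pants decomposition, observe that the component of the cut surface carrying $a_t$ is a one-holed torus or a four-holed sphere, and take the obvious dual curve there. The paper, however, does not prove this lemma at all; it simply quotes it as \cite[Proposition~2.3]{MR2952767} and uses it as a black box. So there is no in-paper proof to compare against, and your sketch reproduces the classical argument behind that citation.
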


\begin{proof}[Proof of Proposition~\ref{prop_ccsp}]
	To show the first property, let $a,b$ be vertices of $\mathcal{C}(S)$ with $\Sigma_a\subseteq\Sigma_b$. Assume for a contradiction that $a\neq b$. Let $A\in\Sigma_a$. Then $A\in\Sigma_b$, so $a,b\in A$. We can write $A=\{c_1=a,c_2,\cdots,c_{3g-3}=b\}$. By Lemma~\ref{lem_elemove}, there is a vertex $c$ of $\mathcal{C}(S)$ such that $i(c,c_i)=0$ for every $1\le i\le3g-4$ and $i(c,c_{3g-3})>0$, so $B=\{c_1=a,c_2,\cdots,c_{3g-4},c\}$ is a maximal simplex in $\mathcal{C}(S)$ with $B\in\Sigma_a$ and $B\notin\Sigma_b$, contradicting $\Sigma_a\subseteq\Sigma_b$. Hence $a=b$.
	
	To show the second property, let $K$ be a simplex of $\mathcal{C}(S)$. Let $x$ be a vertex of $\mathcal{C}(S)$ with $x\notin K$. Write $K=\{x_0,\cdots,x_n\}$. If $i(x,x_j)>0$ for some $0\le j\le n$, then we can take any maximal simplex $L$ in $\mathcal{C}(S)$ with $K\subseteq L$ and note that $x\notin L$. If $i(x,x_i)=0$ for every $0\le i\le n$, then $D=\{x_0,\cdots,x_n,x\}$ is a simplex in $\mathcal{C}(S)$. Let $E$ be a maximal simplex in $\mathcal{C}(S)$ with $D\subseteq E$. Then we apply Lemma~\ref{lem_elemove} on $E$ to obtain a maximal simplex $L$ such that $K\subseteq L$ and $x\notin L$. 
\end{proof}

We are now able to prove Theorem~\ref{thm_combequi}(2) and Theorem~\ref{thm_metacicmodsT}.

\begin{proof}[Proof of Theorem~\ref{thm_combequi}(2)]
	The curve complex $\mathcal{C}(S)$ is finite-dimensional, and by Proposition~\ref{prop_ccsp} it satisfies the condition of Theorem~\ref{thm_nervenerve},  so $\mathcal{C}(S)$ is isomorphic to $\mathcal{N}(\mathcal{N}(\mathcal{C}(S)))$.  Since $\mathcal{N}(\mathcal{C}(S))$ is isomorphic to $\mathcal{K}(\Mod(S),\mathcal{T})$ by Theorem~\ref{thm_combequi}(1), $\mathcal{C}(S)$ is isomorphic to $\mathcal{N}(\mathcal{K}(\Mod(S),\mathcal{T}))$.
\end{proof}

\begin{proof}[Proof of Theorem~\ref{thm_metacicmodsT}]
	By Proposition~\ref{prop_ccsp}, the finite-dimensional simplicial complex $\mathcal{C}(S)$ satisfies both conditions of Theorem~\ref{thm_assumpautxiso}, so $\Aut(\mathcal{C}(S))$ is isomorphic to $\Aut(\mathcal{N}(\mathcal{C}(S)))$. 
\end{proof}

\section{Groups quasi-isometric to the mapping class groups}
\label{sec_gqimcgs}

We prove Corollary~\ref{thm_gqimcg} in this section. All groups considered in this section are finitely generated, and each of them is endowed with a word metric induced by a finite generating set. 

\begin{defn}
	A pair of subgroups $H_1$ and $H_2$ of a group $G$ are \emph{commensurable} if $H_1\cap H_2$ is finite-index in both $H_1$ and $H_2$. Given a subgroup $K$ of $G$, the \emph{commensurator} of $K$ is defined as $\Comm_G(K)=\{g\in G\mid K\text{ and }gKg^{-1}\text{ are commensurable}\}$.
\end{defn} 

The following proposition is well-known.

\begin{prop}
	\label{prop_ficom}
	Let $A,B$ be subgroups of a group $G$. Let $g\in G$. Then:
	\begin{enumerate}
		\item $\Comm_G(gAg^{-1})=g\Comm_G(A)g^{-1}$.
		\item If $A$ is a finite-index subgroup of $B$, then $\Comm_G(A)=\Comm_G(B)$.
	\end{enumerate}
\end{prop}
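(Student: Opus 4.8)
The plan is to prove both parts by unwinding the definition of the commensurator, using two elementary closure properties of the commensurability relation (which I will write $\sim$): it is preserved by conjugation, and it is an equivalence relation on the set of subgroups of $G$.

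For part (1), the first step is to note that conjugation by a fixed $g\in G$ is an automorphism of $G$, so it sends a finite-index subgroup to a finite-index subgroup of the same index and commutes with intersections; hence $K_1\sim K_2$ if and only if $gK_1g^{-1}\sim gK_2g^{-1}$ for any subgroups $K_1,K_2$. Then I would simply unwind the definitions: $h\in\Comm_G(gAg^{-1})$ says precisely that $gAg^{-1}\sim h(gAg^{-1})h^{-1}=(hg)A(hg)^{-1}$, and conjugating both of these subgroups by $g^{-1}$ shows this is equivalent to $A\sim(g^{-1}hg)A(g^{-1}hg)^{-1}$, i.e. $g^{-1}hg\in\Comm_G(A)$, i.e. $h\in g\Comm_G(A)g^{-1}$. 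This yields the set equality in (1).

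For part (2), I would first observe that $A$ being finite-index in $B$ implies $A\sim B$ (since $A\cap B=A$), so it suffices to prove the stronger assertion that $A\sim B$ implies $\Comm_G(A)=\Comm_G(B)$. This requires commensurability to be an equivalence relation: reflexivity and symmetry are immediate, and transitivity follows from fact (i) in the proof of Lemma~\ref{lem_comgen} with $m=2$ (equivalently, from the index bound $[B_1\cap B_2:A_1\cap A_2]\le[B_1:A_1]\,[B_2:A_2]$). Granting this, let $A\sim B$ and $h\in\Comm_G(A)$, so $A\sim hAh^{-1}$; conjugating $A\sim B$ by $h$ gives $hAh^{-1}\sim hBh^{-1}$, and chaining $B\sim A\sim hAh^{-1}\sim hBh^{-1}$ gives $h\in\Comm_G(B)$. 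The reverse inclusion is symmetric, so $\Comm_G(A)=\Comm_G(B)$.

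I do not expect a genuine obstacle, since the proposition is elementary; the only points requiring care are the two closure properties of $\sim$ invoked above, and both reduce to the bound on the index of an intersection recorded as fact (i) in the proof of Lemma~\ref{lem_comgen}. If one prefers to avoid the equivalence-relation bookkeeping, part (2) can instead be proved by a direct index computation: given $h\in\Comm_G(A)$, bound $[B:B\cap hBh^{-1}]$ and $[hBh^{-1}:B\cap hBh^{-1}]$ in terms of the finite indices $[B:A]$, $[hBh^{-1}:hAh^{-1}]=[B:A]$, $[A:A\cap hAh^{-1}]$ and $[hAh^{-1}:A\cap hAh^{-1}]$.
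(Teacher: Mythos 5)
The paper does not prove this proposition at all --- it is stated as ``well-known'' and used without justification --- so there is no in-paper argument to compare against. Your proof is correct and complete: the conjugation-equivariance of commensurability gives (1) by unwinding definitions, and the reduction of (2) to the statement that commensurable subgroups have equal commensurators, via transitivity of $\sim$ (which indeed follows from fact (i) in the proof of Lemma~\ref{lem_comgen} with $m=2$), is the standard argument. No gaps.
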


\begin{defn}
	A group pair $(G,\mathcal{A})$ is \emph{reducible} if every $A\in\mathcal{A}$ is finite-index in $\Comm_G(A)$, and it is \emph{reduced} if $A=\Comm_G(A)$ for every $A\in\mathcal{A}$ and subgroups in $\mathcal{A}$ are pairwise non-conjugate. 
\end{defn}

Every reduced group pair is reducible. For the remainder of this article, we write $\Comm_{\Mod(S)}(K)$ as $\Comm(K)$ for simplicity. Recall that $P_1,\cdots,P_n$ are representatives of orbits of $\Mod(S)$-action on the set of maximal simplices of $\mathcal{C}(S)$, and $H_i=\langle T_a\mid a\in P_i\rangle$ for each $1\le i\le n$. Each $H_i$ is a free abelian group of rank $3g-3$, where $g\ge2$ is the genus of the surface $S$. 

\begin{prop}
	\label{prop_Treduced}
	For each $1\le i\le n$, we have $\Comm(H_i)=\Stab(P_i)$. Consequently, $(\Mod(S),\mathcal{T})$ is reduced, and so reducible.
\end{prop}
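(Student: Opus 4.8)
The goal is to show $\Comm(H_i) = \Stab(P_i)$ for each $i$, from which the claim that $(\Mod(S),\mathcal{T})$ is reduced follows immediately: Lemma~\ref{lem_Hfistab} gives that $H_i$ is finite-index in $\Stab(P_i)$, so by Proposition~\ref{prop_ficom}(2) we get $\Comm(\Stab(P_i)) = \Comm(H_i) = \Stab(P_i)$; and distinct $P_i$ lie in distinct $\Mod(S)$-orbits, hence the $\Stab(P_i)$ are pairwise non-conjugate. So everything reduces to the single identity $\Comm(H_i) = \Stab(P_i)$.

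For the inclusion $\Stab(P_i) \subseteq \Comm(H_i)$: if $g \in \Stab(P_i)$, then $g(P_i) = P_i$, so by Lemma~\ref{lem_Dtp}(3) we have $g H_i g^{-1} = \langle T_{g(a)} \mid a \in P_i\rangle = \langle T_a \mid a \in P_i\rangle = H_i$. Thus $\Stab(P_i)$ normalises $H_i$, and in particular $g H_i g^{-1} = H_i$ is finite-index (indeed equal) in both $H_i$ and $g H_i g^{-1}$, so $g \in \Comm(H_i)$.

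For the reverse inclusion $\Comm(H_i) \subseteq \Stab(P_i)$: let $g \in \Comm(H_i)$, so $H_i \cap g H_i g^{-1}$ is finite-index in both $H_i$ and $g H_i g^{-1}$. Since $H_i \cong \mathbb{Z}^{3g-3}$, any finite-index subgroup still has rank $3g-3$; writing $g H_i g^{-1} = \langle T_a \mid a \in g(P_i)\rangle$ (Lemma~\ref{lem_Dtp}(3)), I want to conclude that the two pants decompositions $P_i$ and $g(P_i)$ must coincide. The plan is: take an element $x \in H_i \cap g H_i g^{-1}$ of infinite order; expressing $x$ as a word in $\{T_a \mid a\in P_i\}$ and also as a word in $\{T_a \mid a \in g(P_i)\}$, Lemma~\ref{lem_3.17FM12} forces the sets of Dehn-twist powers appearing to agree, hence the underlying curves with nonzero exponents in the two expressions are the same. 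To get \emph{every} curve of $P_i$ this way, I run this over a spanning set: for each $a \in P_i$, some power $T_a^N$ lies in the finite-index subgroup $H_i \cap g H_i g^{-1}$, and applying Lemma~\ref{lem_3.17FM12} to the equation $T_a^N = T_{b_1}^{q_1}\cdots T_{b_k}^{q_k}$ (with $b_j \in g(P_i)$) together with Lemma~\ref{lem_Dtp}(2) yields $a \in g(P_i)$. Hence $P_i \subseteq g(P_i)$, and by symmetry (or by cardinality, both being pants decompositions of size $3g-3$) we get $P_i = g(P_i)$, i.e. $g \in \Stab(P_i)$.

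The main obstacle is the reverse inclusion, and specifically making the commensurability hypothesis yield the equality of pants decompositions rather than merely a relation between the abelian groups: the subtlety is that a priori $g H_i g^{-1}$ is generated by twists about the curves $g(P_i)$, and one must rule out the possibility that $H_i$ shares only a proper sub-lattice with this group while $g(P_i)\neq P_i$. The key leverage is Lemma~\ref{lem_3.17FM12}, which upgrades an algebraic coincidence of products of twists to a combinatorial coincidence of the curves; the only care needed is to ensure that the finite-index subgroup $H_i \cap gH_ig^{-1}$ really does contain a nonzero power of $T_a$ for each $a\in P_i$, which is automatic since finite index in $\mathbb{Z}^{3g-3}$ forces it.
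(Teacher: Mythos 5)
Your proof is correct and follows essentially the same route as the paper: the forward inclusion via $gH_ig^{-1}=H_i$ from Lemma~\ref{lem_Dtp}(3), and the reverse inclusion by using that $H_i\cap gH_ig^{-1}$ has full rank in $\mathbb{Z}^{3g-3}$ to force $P_i=g(P_i)$, with the reduction to $\Comm(\Stab(P_i))$ handled by Lemma~\ref{lem_Hfistab} and Proposition~\ref{prop_ficom}(2). The only cosmetic difference is that the paper cites the ``moreover'' clause of Lemma~\ref{lem_comgen} to identify $H_i\cap gH_ig^{-1}$ with $\langle T_a\mid a\in P_i\cap g(P_i)\rangle$, whereas you re-derive that step directly from Lemmata~\ref{lem_3.17FM12} and \ref{lem_Dtp}(2) by extracting a power $T_a^N$ for each $a\in P_i$; both are valid and amount to the same argument.
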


\begin{proof}
	Let $g\in\Stab(P_i)$. Then $g(P_i)=P_i$, so $gH_ig^{-1}=H_i$ by Lemma~\ref{lem_Dtp}(3). It follows that $g\in\Comm(H_i)$. Conversely, let $g\in\Comm(H_i)$. Then $gH_ig^{-1}$ and $H_i$ are commensurable and are both free abelian groups of rank $3g-3$ by Lemma~\ref{lem_Dtp}(3), so $gH_ig^{-1}\cap H_i$ is a free abelian group of rank $3g-3$. Then by Lemma~\ref{lem_comgen}, $gH_ig^{-1}=H_i$, so $g(P_i)=P_i$, i.e. $g\in\Stab(P_i)$. This shows that $\Comm(H_i)=\Stab(P_i)$. By Lemma~\ref{lem_Hfistab}, $H_i$ is a finite-index subgroup of $\Stab(P_i)$, so $\Comm(H_i)=\Comm(\Stab(P_i))=\Stab(P_i)$ by Proposition~\ref{prop_ficom}.
	
	Suppose that $g\Stab(P_i)g^{-1}=h\Stab(P_j)h^{-1}$ for some $g,h\in\Mod(S)$ and some $1\le i,j\le n$. Then for every $a\in P_i$, $T_{h^{-1}(g(a))}\in\Stab(P_j)$, which implies that $h^{-1}(g(P_i))=P_j$ by Lemma~\ref{lem_Dtp}(5), so $P_i=P_j$. Therefore, subgroups in $\mathcal{T}$ are pairwise non-conjugate. 
\end{proof}

We also need the definition of virtually isomorphic group pairs. 

\begin{defn}[{\cite[Definition 2.8]{MR5034311}}]
	\label{def_vi}
	Let $(G,\mathcal{A})$ and $(K,\mathcal{B})$ be two group pairs. We say that $(G,\mathcal{A})$ and $(K,\mathcal{B})$ are \emph{virtually isomorphic} if one can be transformed to another by a finite sequence of the following operations and their inverses:
	\begin{enumerate}
		\item Substitute $\mathcal{A}$ with a finite collection $\mathcal{B}$ such that each $B\in\mathcal{B}$ is commensurable to a conjugate of $A\in\mathcal{A}$, and vice versa.
		\item Substitute $(G,\mathcal{A})$ with $(G/N,\{AN/N\mid A\in\mathcal{A}\})$, where $N$ is a finite normal subgroup of $G$. 
		\item Substitute $(G,\mathcal{A})$ with $(K,\mathcal{B})$, where $K$ is a finite-index subgroup of $G$, and $\mathcal{B}$ is given by: if $\{g_iA_i\mid i\in I\}$ is a collection of representatives of the orbits of the action of $K$ on $G/\mathcal{A}$ by left multiplication, then $\mathcal{B}=\{g_iA_ig_i^{-1}\cap K\mid i\in I\}$. 
	\end{enumerate}
\end{defn}

A map $f\colon X\to Y$ between metric spaces is a $(\lambda,\mu,C)$-\emph{quasi-isometry} for $\lambda\ge1$, $\mu,C\ge0$ if for every $a,b\in X$,
$\frac{1}{\lambda}d_X(x,y)-\mu\le d_Y(f(x),f(y))\le\lambda d_X(x,y)+\mu$,
and for every $y\in Y$, there is $x\in X$ with $d_Y(f(x),y)\le C$.

A $(\lambda,\mu,C,M)$-\emph{quasi-isometry of group pairs} $q\colon(G,\mathcal{A})\to(K,\mathcal{B})$ is a $(\lambda,\mu,C)$-quasi-isometry $q\colon G\to K$ such that for every $gA\in G/\mathcal{A}$ there is $kB\in K/\mathcal{B}$ such that $\hdist(q(gA),kB)<M$, where $\hdist(\cdot,\cdot)$ denotes the Hausdorff distance in $K$, and the quasi-inverse of $q$ has the analogous property, see \cite[Section 2]{MR5034311}. This gives an equivalence relation in the class of group pairs. 

\begin{prop}[{\cite[Proposition 2.9]{MR5034311}}]
	\label{prop_viqi}
	If two group pairs are virtually isomorphic, then they are quasi-isometric group pairs. 
\end{prop}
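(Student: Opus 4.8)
The plan is to reduce to the three generating operations of Definition~\ref{def_vi}. Being quasi-isometric as group pairs is an equivalence relation (as noted just before the statement), and a composition of quasi-isometries of group pairs is again one, since the Hausdorff-distance bounds compose coarsely. Because virtual isomorphism is by definition a finite word in these operations and their inverses, it therefore suffices to check that applying a single operation to $(G,\mathcal{A})$ yields a group pair that is quasi-isometric, \emph{as a pair}, to $(G,\mathcal{A})$.

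For each operation I would exhibit an explicit quasi-isometry of the underlying groups and then verify the coset condition. Two elementary facts will be used throughout. First, if $L$ is finite-index in $M$ then for every $g$ the cosets $gL$ and $gM$ are at Hausdorff distance at most a constant depending only on $[M:L]$ and the generating set; more generally, commensurable subgroups have cosets at uniformly bounded Hausdorff distance. Second, right translation $x\mapsto xh$ and conjugation move every point a distance at most $2|h|$, so $ghAh^{-1}$ lies within Hausdorff distance $|h|$ of the left coset $(gh)A$. Since $\mathcal{A}$ is finite, only finitely many conjugating or translating elements arise, so all constants can be taken uniform.

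Operation (1) is realised by the identity $G\to G$: if $B\in\mathcal{B}$ is commensurable with $hAh^{-1}$, then by the first fact $gB$ is close to $g(B\cap hAh^{-1})$, hence to $ghAh^{-1}$, hence by the second fact to the coset $(gh)A$; the hypothesis that the roles of $\mathcal{A}$ and $\mathcal{B}$ are symmetric supplies the condition for the quasi-inverse. Operation (2) is realised by the quotient map $\pi\colon G\to G/N$, a quasi-isometry since $N$ is finite; one checks that $\pi(gA)=(gN)(AN/N)$ exactly, and for the quasi-inverse one takes a set-theoretic section $s$, using that $s(\pi(g))$ lies in the same $N$-coset as $g$, so $s\bigl((gN)(AN/N)\bigr)=s(\pi(gA))$ is within bounded Hausdorff distance of $gA$. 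Operation (3) is realised by the inclusion $\iota\colon K\hookrightarrow G$ of a finite-index subgroup: if $B=g_iA_ig_i^{-1}\cap K$ then $B$ has index at most $[G:K]$ in $g_iA_ig_i^{-1}$, so $kB$ is within bounded Hausdorff distance in $G$ of $(kg_i)A_i$; conversely, any $gA\in G/\mathcal{A}$ can be written as $(kg_i)A_i$ with $k\in K$ using that the $g_iA_i$ represent the $K$-orbits on $G/\mathcal{A}$, whence $gA$ is close in $G$ to $kB$, and composing with a coarse inverse of $\iota$ transports this to the statement that cosets of $\mathcal{B}$ in $K$ match cosets of $\mathcal{A}$ in $G$ up to bounded Hausdorff distance.

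I expect the main obstacle to be organisational rather than conceptual: carrying the Hausdorff-distance bound $M$ through an arbitrary finite composition of operations while confirming it stays finite — which is exactly where finiteness of the collections, finiteness of $N$, and finiteness of $[G:K]$ enter — and, within operation (3), taking care to verify the coset condition for the \emph{quasi-inverse} of $\iota$, since there the cosets of $\mathcal{B}$ live in $K$ while those of $\mathcal{A}$ live in $G$ and are matched only after the $K$-orbit decomposition of $G/\mathcal{A}$. With these verifications in place, transitivity of the relation closes the argument.
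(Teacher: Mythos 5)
Your argument is correct, but note that this paper does not prove Proposition~\ref{prop_viqi} at all: it is imported verbatim, with proof, from \cite[Proposition 2.9]{abbott2025quasiisometryinvariancecosetintersection}, so there is no in-paper proof to compare against. Your reduction to the three generating operations of Definition~\ref{def_vi}, the verification that each single operation (identity map, quotient by the finite normal subgroup, inclusion of the finite-index subgroup) is a quasi-isometry of pairs with uniformly bounded Hausdorff-distance constants, and the appeal to transitivity of the equivalence relation is exactly the standard route one expects the cited source to take; the points you flag as delicate (uniformity of constants via finiteness of the collections, and checking the coset condition for the quasi-inverse of the inclusion in operation (3)) are indeed the only places requiring care, and your treatment of them is sound.
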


The following important result is due, independently, to Behrstock\textendash Kleiner\textendash Minsky\textendash Mosher \cite{MR2928983} and Hamenst\"{a}dt \cite{hamenstaedt2007geometrymappingclassgroups}.

\begin{theorem}[{\cite[Theorem 1.2]{MR2928983}}]
	\label{thm_MCGrigidity}
	Let $K$ be a finitely generated group quasi-isometric to $\Mod^{\pm}(S)$. Then there is a group homomorphism $K\to\Mod^{\pm}(S)$ with finite kernel and finite-index image. 
\end{theorem}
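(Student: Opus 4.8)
The plan is to factor the statement into a \emph{geometric rigidity} computation, identifying the group $\mathrm{QI}(\Mod^{\pm}(S))$ of self-quasi-isometries with $\Mod^{\pm}(S)$ itself, followed by a formal \emph{bootstrapping} argument that upgrades this to the claimed homomorphism $K\to\Mod^{\pm}(S)$. Since $\Mod(S)$ has index two in $\Mod^{\pm}(S)$, the two groups are quasi-isometric, so it is harmless to phrase everything in terms of $\Mod^{\pm}(S)$.

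For the geometric rigidity step I would replace $\Mod^{\pm}(S)$ by the marking complex of $S$, which is quasi-isometric to the group with a word metric and on which the group acts cocompactly. The essential input is the Masur--Minsky subsurface-projection machinery (equivalently, the hierarchically hyperbolic structure): the coarse geometry of the marking complex is organised by the curve complexes $\mathcal{C}(W)$ of the subsurfaces $W\subseteq S$, together with Behrstock's inequality governing how projections to distinct subsurfaces interact. The two features to extract and show are quasi-isometry invariant are (i) the \emph{top-dimensional quasiflats} of dimension $3g-3$, which coarsely coincide with cosets of the twist subgroups $\langle T_a\mid a\in P\rangle$ for pants decompositions $P$; and (ii) the coarse \emph{product regions} attached to individual curves and their stabilisers. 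This is a genuinely higher-rank phenomenon: $\Mod^{\pm}(S)$ is far from hyperbolic, so one cannot argue on a Gromov boundary and must instead analyse the asymptotic cone, where the product and wall structure becomes visible combinatorially.

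From this rigid structure I would reconstruct the curve complex purely coarsely: the pattern of maximal quasiflats and their coarse intersections recovers the incidence data of pants decompositions and their shared curves --- exactly the combinatorics that the present paper packages as $\mathcal{N}(\mathcal{C}(S))$ and $\mathcal{K}(\Mod(S),\mathcal{T})$. Consequently every self-quasi-isometry of $\Mod^{\pm}(S)$ induces a simplicial automorphism of $\mathcal{C}(S)$, while elements of $\Mod^{\pm}(S)$ act by such automorphisms. Invoking the Ivanov--Luo theorem (Theorem~\ref{thm_autcs}), $\Aut(\mathcal{C}(S))\cong\Mod^{\pm}(S)$, I would conclude that the natural map $\Mod^{\pm}(S)\to\mathrm{QI}(\Mod^{\pm}(S))$ is an isomorphism: it is injective because in the relevant range $\Mod^{\pm}(S)$ has no nontrivial bounded-displacement elements (trivial centre; at genus two the hyperelliptic involution is absorbed harmlessly into a finite kernel), and surjective by the reconstruction just described.

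Finally, for the bootstrapping step, fix a quasi-isometry $q\colon K\to\Mod^{\pm}(S)$ with quasi-inverse $\bar q$. For $k\in K$ the left translation $L_k$ is an isometry of $K$, so $q\circ L_k\circ\bar q$ is a self-quasi-isometry of $\Mod^{\pm}(S)$; sending $k$ to its class in $\mathrm{QI}(\Mod^{\pm}(S))\cong\Mod^{\pm}(S)$ defines a homomorphism $\Phi\colon K\to\Mod^{\pm}(S)$. Its kernel consists of those $k$ for which $L_k$ transports to a bounded-distance perturbation of the identity, forcing $|k|$ to be uniformly bounded; since $K$ is finitely generated this is a finite set, so $\ker\Phi$ is finite. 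The image acts coboundedly on $\Mod^{\pm}(S)$ because $K$ acts coarsely transitively on itself and $q$ is a quasi-isometry, and a cobounded subgroup of a finitely generated group has finite index. The \textbf{main obstacle} is the geometric rigidity step: proving that quasi-isometries preserve the hierarchy and the top-dimensional flat structure, through Behrstock's inequality and the fine analysis of asymptotic cones, is the technical heart of \cite{MR2928983} and is where essentially all the difficulty resides; the bootstrapping step is formal by comparison.
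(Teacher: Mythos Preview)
The paper does not prove this theorem: it is stated as a citation from \cite{MR2928983} (and, independently, Hamenst\"adt) and is used as a black box input to Corollary~\ref{lem_HQModST} and hence to Theorem~\ref{thm_gqimcg}. There is therefore no ``paper's own proof'' to compare your proposal against.

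That said, your sketch is a fair high-level outline of the Behrstock--Kleiner--Minsky--Mosher strategy: identify $\mathrm{QI}(\Mod^{\pm}(S))$ with $\Mod^{\pm}(S)$ by showing that self-quasi-isometries coarsely preserve the pattern of maximal quasiflats and product regions (via the Masur--Minsky hierarchy and an asymptotic-cone analysis), reconstruct $\mathcal{C}(S)$ from that pattern, apply Ivanov's theorem, and then run the standard Cannon--Schwartz bootstrapping to pass from $\mathrm{QI}\cong\Mod^{\pm}(S)$ to the homomorphism with finite kernel and finite-index image. Two small cautions. First, be careful not to lean on the present paper's Theorems~\ref{thm_combequi} or \ref{thm_gqimcg} when you say the quasiflat incidence data ``is exactly what the present paper packages as $\mathcal{N}(\mathcal{C}(S))$'': those results sit downstream of Theorem~\ref{thm_MCGrigidity}, so invoking them here would be circular; you must establish the quasiflat rigidity directly, which is indeed where all the work in \cite{MR2928983} lies. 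Second, Theorem~\ref{thm_autcs} as stated requires genus at least three; in genus two the map $\Mod^{\pm}(S)\to\Aut(\mathcal{C}(S))$ has the hyperelliptic involution in its kernel, so the identification $\mathrm{QI}\cong\Mod^{\pm}(S)$ needs a slight adjustment there (as you note, this only contributes to the finite kernel and does not affect the conclusion).
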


A corollary of this result that we will use is the following. 

\begin{cor}
	\label{lem_HQModST}
	Let $H$ be a finitely generated group quasi-isometric to $\Mod(S)$. Then there is a finite collection $\mathcal{Q}$ of subgroups of $H$ such that $(H,\mathcal{Q})$ is quasi-isometric to $(\Mod(S),\mathcal{T})$.
\end{cor}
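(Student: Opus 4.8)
The plan is to move in two stages: first relate $\Mod(S)$ and $\Mod^{\pm}(S)$, then transport the pair $(\Mod(S),\mathcal T)$ across the quasi-isometry provided by Theorem~\ref{thm_MCGrigidity}. Observe that $\Mod(S)$ is an index-two subgroup of $\Mod^{\pm}(S)$, hence $\Mod(S)$ and $\Mod^{\pm}(S)$ are quasi-isometric; so $H$ is quasi-isometric to $\Mod^{\pm}(S)$, and Theorem~\ref{thm_MCGrigidity} yields a homomorphism $\phi\colon H\to\Mod^{\pm}(S)$ with finite kernel $N$ and finite-index image $L\le\Mod^{\pm}(S)$. Since $L$ has finite index in $\Mod^{\pm}(S)$, the subgroup $L\cap\Mod(S)$ has finite index in $\Mod(S)$ (and in $L$), so after passing to this finite-index subgroup I may assume the image of $\phi$ lands in $\Mod(S)$ and is finite-index there; correspondingly I replace $H$ by the finite-index subgroup $\phi^{-1}(L\cap\Mod(S))$, which is quasi-isometric to $H$.

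Next I would run the virtual-isomorphism operations of Definition~\ref{def_vi} and invoke Proposition~\ref{prop_viqi}. Starting from $(\Mod(S),\mathcal T)$: operation~(3) lets me pass to the finite-index subgroup $L'=L\cap\Mod(S)$ of $\Mod(S)$, replacing $\mathcal T$ by the finite collection $\mathcal T'=\{g_iT_ig_i^{-1}\cap L'\mid i\in I\}$ obtained from orbit representatives of the $L'$-action on $\Mod(S)/\mathcal T$ — this is finite because $\Mod(S)/\mathcal T$ has finitely many $L'$-orbits (it already has finitely many $\Mod(S)$-orbits, one per $P_i$, and $[\Mod(S):L']<\infty$). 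Thus $(\Mod(S),\mathcal T)$ is virtually isomorphic to $(L',\mathcal T')$. Now $\phi$ restricts to a surjection $H\twoheadrightarrow L'$ with finite kernel $N$, so operation~(2) (quotient by the finite normal subgroup $N$) shows $(H,\mathcal Q)$ is virtually isomorphic to $(L',\{\phi(Q)\mid Q\in\mathcal Q\})$ for any choice of $\mathcal Q$; I simply \emph{define} $\mathcal Q=\{\phi^{-1}(T')\mid T'\in\mathcal T'\}$, so that $\phi(Q)=T'$ (using surjectivity of $\phi$ onto $L'$), giving $(H,\mathcal Q)$ virtually isomorphic to $(L',\mathcal T')$, hence to $(\Mod(S),\mathcal T)$. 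Finally I undo the initial replacement: since passing between a group and a finite-index subgroup is operation~(3), the original $H$ (with the induced collection pulled back along the inclusion of the finite-index subgroup) is virtually isomorphic to $(\Mod(S),\mathcal T)$ as well, and Proposition~\ref{prop_viqi} upgrades this to a quasi-isometry of group pairs.

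The main obstacle is purely bookkeeping: tracking the collections $\mathcal T\rightsquigarrow\mathcal T'\rightsquigarrow\mathcal Q$ through the three operations of Definition~\ref{def_vi} while ensuring at each step that the collection stays \emph{finite} and consists of the required commensurable/conjugate/intersected subgroups. The finiteness at the finite-index step is the one genuine point to check, and it follows from finiteness of $\Mod(S)/\mathcal T$ modulo $\Mod(S)$ together with $[\Mod(S):L']<\infty$; everything else is a routine composition of the allowed moves, and no new geometric input beyond Theorem~\ref{thm_MCGrigidity} is needed.
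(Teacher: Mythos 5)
Your overall strategy---quasi-isometric rigidity to get $\phi\colon H\to\Mod^{\pm}(S)$ with finite kernel and finite-index image, followed by a chain of the operations of Definition~\ref{def_vi} and an appeal to Proposition~\ref{prop_viqi}---is the same as the paper's. The arrangement differs in one consequential way: the paper keeps $H$ intact throughout and instead relates $(\Mod^{\pm}(S),\mathcal{T})$ to $(\Mod(S),\mathcal{T})$ on the target side, so that operation~(3) is only ever applied in the \emph{forward} direction (passing from a group down to a finite-index subgroup) and the only inverse operation needed is the inverse of operation~(2), which is clean because $\phi^{-1}(B)\supseteq\ker\phi$ forces $\phi^{-1}(B)\ker\phi/\ker\phi=B$. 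You instead shrink $H$ to $H'=\phi^{-1}(L\cap\Mod(S))$, and must therefore apply operation~(3) in the \emph{reverse} direction at the end.

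That last step is where the gap is. There is no ``induced collection pulled back along the inclusion'' of a finite-index subgroup: operation~(3) only pushes a collection \emph{down} from $H$ to $H'$, so to run it in reverse you must exhibit a collection $\mathcal{Q}$ of subgroups of $H$ and verify that the recipe applied to $(H,\mathcal{Q})$ with the subgroup $H'$ produces a collection that is operation-(1)-equivalent to your $\mathcal{Q}'$. Taking $\mathcal{Q}=\mathcal{Q}'$ viewed inside $H$, the recipe outputs subgroups $gQg^{-1}\cap H'$ for orbit representatives $gQ$ of the $H'$-action on $H/\mathcal{Q}$, and for $g\notin H'$ it is not automatic that such a subgroup is commensurable to an \emph{$H'$-conjugate} of an element of $\mathcal{Q}'$ (operation~(1) only permits conjugation inside the ambient group of the pair, here $H'$). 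The claim is in fact true in this situation---one traces $gQg^{-1}\cap H'$ through $\phi$ to a finite-index subgroup of $\Stab(\phi(g)h(P_k))$, identifies that coset with an $L'$-translate of one of the chosen representatives of $L'$-orbits on $\Mod(S)/\mathcal{T}$, and lifts the conjugating element back to $H'$---but this is exactly the non-trivial verification, not the finiteness of the collections, which you single out as ``the one genuine point to check.'' Either carry out this verification explicitly, or reorganise the argument as the paper does so that the reverse of operation~(3) is never needed.
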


\begin{proof}
	We first show that $(\Mod^{\pm}(S),\mathcal{T})$ is virtually isomorphic to $(\Mod(S),\mathcal{T})$ using Definition~\ref{def_vi}(1)(2). Let $\{h_i\Stab(P_i)\mid1\le i\le k\}$ be representitives of $\Mod(S)$-action on $\Mod^{\pm}(S)/\mathcal{T}$. Then $(\Mod^{\pm}(S),\mathcal{T})$ is virtually isomorphic to $(\Mod(S),\mathcal{R})$, where $\mathcal{R}=\{h_i\Stab(P_i)h_i^{-1}\mid1\le i\le k\}$. For each $1\le i\le k$, there are $f\in\Mod(S)$ and $1\le j\le k$ such that $f(P_j)=h_i(P_i)$, so $f\Stab(P_j)f^{-1}=h_i\Stab(P_i)h_i^{-1}$. It follows that $(\Mod(S),\mathcal{R})$ is virtually isomorphic to $(\Mod(S),\mathcal{T})$.
	
	We next prove that $(\Mod^{\pm}(S),\mathcal{T})$ is virtually isomorphic to a group pair $(H,\mathcal{A})$ by applying Definition~\ref{def_vi}(2)(3). By Theorem~\ref{thm_MCGrigidity}, there is a group homomorphism $f\colon H\to\Mod^{\pm}(S)$ such that $\ker(f)$ is finite and $f(H)$ is finite-index in $\Mod^{\pm}(S)$. Let $\{g_i\Stab(P_i)\mid1\le i\le t\}$ be representatives of $f(H)$-action on $\Mod^{\pm}(S)/\mathcal{T}$, and set $\mathcal{B}=\{g_i\Stab(P_i)g_i^{-1}\cap f(H)\mid1\le i\le t\}$. Then $(f(H),\mathcal{B})$ is virtually isomorphic to $(\Mod^{\pm}(S),\mathcal{T})$. Note that $(f(H),\mathcal{B})$ is the same as $(H/\ker(f),\{f^{-1}(B)/\ker(f)\mid B\in\mathcal{B}\})$, and the latter pair is virtually isomorphic to $(H,\mathcal{A})$ where $\mathcal{A}=\{f^{-1}(B)\mid B\in\mathcal{B}\}$.  
	
	Finally, Proposition~\ref{prop_viqi} implies that $(\Mod(S),\mathcal{T})$ is quasi-isometric to $(H,\mathcal{A})$.
\end{proof}

We also need the following three results for the main theorem of this section.

\begin{theorem}[{\cite[Theorem 3.9]{MR5034311}}]
	\label{thm_reduqiinv}
	Let $(G,\mathcal{A})$ and $(K,\mathcal{B})$ be two quasi-isometric group pairs such that $(G,\mathcal{A})$ is reducible. Then $(K,\mathcal{B})$ is also reducible. 
\end{theorem}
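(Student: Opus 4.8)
The plan is to give an intrinsic, coarse-geometric characterisation of reducibility in terms of the coset space $G/\mathcal{A}$ equipped with its Hausdorff metric, and then to observe that this characterisation is transported across quasi-isometries of pairs.

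\textbf{Coset geometry detects the commensurator.} The first step is the elementary fact that, for subgroups $H_1,H_2$ of a finitely generated group $G$ with a word metric, $\hdist(H_1,H_2)<\infty$ if and only if $H_1$ and $H_2$ are commensurable. For the ``if'' direction one writes $H_1$ and $H_2$ as finite unions of cosets of $H_1\cap H_2$, each contained in a bounded neighbourhood of $H_1\cap H_2$. For the ``only if'' direction, if $\hdist(H_1,H_2)\le D$ then every element of $H_1$ lies in $H_2 B_D$, where $B_D$ is the $D$-ball about the identity, so $H_1$ meets at most $|B_D|$ left cosets of $H_2$; but the left cosets of $H_2$ meeting $H_1$ are exactly the cosets of $H_1\cap H_2$ inside $H_1$, whence $[H_1:H_1\cap H_2]\le|B_D|$, and symmetrically. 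Applying this to $H_2=gAg^{-1}$ and using $\hdist(gA,gAg^{-1})\le|g|$ gives $\{g\in G:\hdist(gA,A)<\infty\}=\Comm(A)$, and more generally $\hdist(gA',A)<\infty$ forces a conjugate of $A'$ to be commensurable to $A$.

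\textbf{A numerical criterion for reducibility.} For $D\ge 0$ let $\nu_G(D)$ be the supremum, over cosets $C\in G/\mathcal{A}$, of the number of cosets $C'\in G/\mathcal{A}$ with $\hdist(C,C')\le D$. One checks first that $\nu_G(D)<\infty$ for every $D$: translating, one may assume $C=A\in\mathcal{A}$, and any $C'$ with $\hdist(A,C')\le D$ can be written $hB$ with $|h|\le D$ and $B\in\mathcal{A}$, of which there are finitely many because $\mathcal{A}$ is finite. Then, combining the previous step with the finiteness of $\mathcal{A}$: if $(G,\mathcal{A})$ is reducible, every ``parallelism class'' $\{C':\hdist(C,C')<\infty\}$ is finite with cardinality bounded independently of $C$, so $\nu_G$ is bounded; conversely, if some $A\in\mathcal{A}$ has $[\Comm(A):A]=\infty$ then there are infinitely many cosets of $A$ at finite Hausdorff distance from $A$, and since only finitely many lie within any fixed distance, $\nu_G(D)\to\infty$. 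Thus $(G,\mathcal{A})$ is reducible if and only if $\nu_G$ is a bounded function.

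\textbf{Transport along a quasi-isometry of pairs.} Let $q\colon(G,\mathcal{A})\to(K,\mathcal{B})$ be a quasi-isometry of pairs with quasi-inverse $\bar q$; these induce coset correspondences $q_*\colon G/\mathcal{A}\to K/\mathcal{B}$ and $\bar q_*\colon K/\mathcal{B}\to G/\mathcal{A}$ (sending a coset to the nearby coset supplied by the definition), $q$ and $\bar q$ distort Hausdorff distances between cosets only by the fixed quasi-isometry constants, and $q\circ\bar q$, $\bar q\circ q$ are within bounded Hausdorff distance of the identity on cosets. Assume $(G,\mathcal{A})$ is reducible, so $\nu_G\le N$. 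Fix $C'\in K/\mathcal{B}$ and $D\ge 0$. Every coset $C''$ with $\hdist(C',C'')\le D$ has $\bar q_*(C'')$ within a fixed Hausdorff distance $D_1=D_1(D)$ of $\bar q_*(C')$, and since $\nu_G(D_1)\le N$ the map $C''\mapsto\bar q_*(C'')$ takes at most $N$ values. For a fixed value $v$, any two cosets in $\bar q_*^{-1}(v)$ have $\bar q$-images within bounded Hausdorff distance of each other, hence — pushing forward by $q$ and using $q\bar q\approx\mathrm{id}$ on cosets — are themselves within a Hausdorff distance $N_0$ of each other, where $N_0$ depends only on the quasi-isometry constants; therefore $|\bar q_*^{-1}(v)|\le\nu_K(N_0)$. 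Combining, there are at most $N\cdot\nu_K(N_0)$ cosets within $\hdist D$ of $C'$, so $\nu_K(D)\le N\cdot\nu_K(N_0)$ for every $D$. Since $\nu_K(N_0)<\infty$, the function $\nu_K$ is bounded, and $(K,\mathcal{B})$ is reducible.

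\textbf{Main obstacle.} The delicate point is the third step, and specifically the simultaneous control of constants: one must verify that the distance distortions together with the ``$q\bar q\approx\mathrm{id}$ on cosets'' estimates produce a bound $N_0$ depending only on the quasi-isometry constants — not on $D$ nor on the cosets involved — so that the self-referential inequality $\nu_K(D)\le N\cdot\nu_K(N_0)$ genuinely closes up. By contrast, the first two steps are routine coset and ball bookkeeping.
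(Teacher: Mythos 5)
This statement is quoted in the paper from \cite[Theorem 3.9]{abbott2025quasiisometryinvariancecosetintersection} and no proof of it appears in the present article, so there is nothing internal to compare your argument against; I have therefore checked it on its own terms. Your proof is correct and complete in its essentials. The three steps all close up: (i) finite Hausdorff distance between subgroups is equivalent to commensurability (your coset count is cleanest with right cosets of $H_2$, or equivalently via the map $H_1\to B_D$, $h_1\mapsto h_2^{-1}h_1$, whose fibres lie in single cosets of $H_1\cap H_2$ --- a bookkeeping point, not a gap); (ii) the parallelism class of a coset $gA$ is, after translation, exactly $\bigcup_{A'}\{g'A' : g'A'g'^{-1}\text{ commensurable to }A\}$, and for each relevant $A'$ this contributes precisely $[\Comm_G(A'):A']$ cosets, so reducibility is indeed equivalent to boundedness of $\nu_G$, with the finiteness of each $\nu_G(D)$ supplied by $|B_D|\cdot|\mathcal{A}|$; (iii) the constant $N_0$ you flag as the delicate point genuinely depends only on the quasi-isometry data --- two cosets with the same $\bar q_*$-image have $\bar q$-images within $2M'$ of each other, hence are within $2C_0+2\lambda M'+\mu$ of each other after applying $q$ and using $\hdist(q\bar q(C''),C'')\le C_0$ --- so the inequality $\nu_K(D)\le N\cdot\nu_K(N_0)$ holds with a right-hand side independent of $D$. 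The characterisation of reducibility via the coarse geometry of $(G/\mathcal{A},\hdist)$ is very much in the spirit of the cited source, which also works with Hausdorff distances between cosets, so your route is a natural reconstruction rather than a detour.
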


\begin{defn}[{\cite[Definition 3.17]{MR5034311}}]
	Let $\mathcal{A}$ be a finite collection of subgroups of a group $G$. A \emph{refinement} of $\mathcal{A}$, denoted by $\mathcal{A}^*$, is a set of representatives of conjugacy classes of the collection $\{\Comm_G(gAg^{-1})\mid A\in\mathcal{A},g\in G\}$.
\end{defn}

\begin{prop}[{\cite[Remark 3.18]{MR5034311}}]
	\label{prop_refinement}
	Let $(G,\mathcal{A})$ be a reducible group pair. Then $(G,\mathcal{A}^*)$ is reduced and $(G,\mathcal{A})$ is virtually isomorphic to $(G,\mathcal{A}^*)$.
\end{prop}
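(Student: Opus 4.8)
The plan is to check, in order: (i) $\mathcal{A}^*$ is a finite collection of infinite subgroups, so that $(G,\mathcal{A}^*)$ is a legitimate group pair; (ii) $(G,\mathcal{A}^*)$ is reduced; and (iii) $(G,\mathcal{A})$ and $(G,\mathcal{A}^*)$ are related by a single application of operation~(1) of Definition~\ref{def_vi}. The only ingredients needed are Proposition~\ref{prop_ficom}, the fact that finite index implies commensurability, and reducibility of $(G,\mathcal{A})$.

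For (i), I would first use Proposition~\ref{prop_ficom}(1) to rewrite $\Comm_G(gAg^{-1})=g\Comm_G(A)g^{-1}$, so that the collection $\{\Comm_G(gAg^{-1})\mid A\in\mathcal{A},\,g\in G\}$ is precisely the set of $G$-conjugates of the finitely many subgroups $\Comm_G(A)$, $A\in\mathcal{A}$. Hence it falls into at most $|\mathcal{A}|$ conjugacy classes and $\mathcal{A}^*$ is finite. Since $(G,\mathcal{A})$ is reducible, each $A$ has finite index in $\Comm_G(A)$, and conjugating shows $gAg^{-1}$ has finite index in $\Comm_G(gAg^{-1})$. As every member of $\mathcal{A}^*$ is one of these commensurators $\Comm_G(g_0A_0g_0^{-1})$, it contains the infinite subgroup $g_0A_0g_0^{-1}$ with finite index, hence is infinite; so $(G,\mathcal{A}^*)$ is a group pair.

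For (ii), the elements of $\mathcal{A}^*$ are pairwise non-conjugate because $\mathcal{A}^*$ picks one subgroup per conjugacy class. For the self-commensuration property, fix $C\in\mathcal{A}^*$, write $C=\Comm_G(K)$ with $K=g_0A_0g_0^{-1}$; by (i), $K$ has finite index in $C$, so Proposition~\ref{prop_ficom}(2) gives $\Comm_G(C)=\Comm_G(K)=C$. Thus $(G,\mathcal{A}^*)$ is reduced. For (iii), I would apply operation~(1) of Definition~\ref{def_vi} with $\mathcal{B}=\mathcal{A}^*$: each $C\in\mathcal{A}^*$ equals $\Comm_G(g_0A_0g_0^{-1})$ and contains the conjugate $g_0A_0g_0^{-1}$ of $A_0\in\mathcal{A}$ with finite index, hence is commensurable to it; conversely, given $A\in\mathcal{A}$, the subgroup $\Comm_G(A)$ lies in the collection above (take $g=e$), hence is conjugate to some $C\in\mathcal{A}^*$, say $\Comm_G(A)=hCh^{-1}$, and since $A$ has finite index in $\Comm_G(A)=hCh^{-1}$ it is commensurable to a conjugate of $C$. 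This verifies the hypotheses of operation~(1), so $(G,\mathcal{A})$ is virtually isomorphic to $(G,\mathcal{A}^*)$.

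I do not anticipate a serious obstacle, since everything reduces to the conjugation identity for commensurators and to the fixed-point identity $\Comm_G(\Comm_G(K))=\Comm_G(K)$ valid for $K$ of finite index in its commensurator. The two points that need a little care are: reading ``set of representatives of conjugacy classes'' as selecting exactly one subgroup per class, so that the pairwise-non-conjugacy clause of ``reduced'' is automatic; and keeping track of the fact that reducibility is exactly what makes the commensurator operation idempotent on the subgroups appearing in $\mathcal{A}^*$, which is what lets us verify both the ``reduced'' condition and the commensurability clauses of operation~(1) simultaneously.
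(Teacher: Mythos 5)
The paper does not prove this proposition; it is imported verbatim from \cite[Remark 3.18]{abbott2025quasiisometryinvariancecosetintersection}, so there is no in-paper argument to compare against. Your proof is correct and is the standard verification one would expect: the conjugation identity $\Comm_G(gAg^{-1})=g\Comm_G(A)g^{-1}$ gives finiteness of $\mathcal{A}^*$, reducibility gives $gAg^{-1}$ finite index in its commensurator, Proposition~\ref{prop_ficom}(2) then gives idempotency $\Comm_G(\Comm_G(K))=\Comm_G(K)$ and hence reducedness, and a single application of operation~(1) of Definition~\ref{def_vi} yields the virtual isomorphism; each step is justified.
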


\begin{prop}[{\cite[Proposition 4.9(3)]{MR5034311}}]
	\label{prop_reducedsimiso}
	Let $q\colon(G,\mathcal{A})\to(K,\mathcal{B})$ be a quasi-isometry of group pairs. If both $(G,\mathcal{A})$ and $(K,\mathcal{B})$ are reduced, then $q$ induces a simplicial isomorphism $\dot{q}\colon\mathcal{K}(G,\mathcal{A})\to\mathcal{K}(K,\mathcal{B})$.
\end{prop}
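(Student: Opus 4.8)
The plan is to construct $\dot q$ on vertices first, using the reduced hypothesis to make the choice canonical, and then verify that $\dot q$ and its inverse preserve simplices by reformulating the simplex condition in a form that is manifestly invariant under quasi-isometries of group pairs. Write $q\colon G\to K$ for the underlying $(\lambda,\mu,C)$-quasi-isometry, $\bar q$ for a quasi-inverse, $d$ for the word metrics, and $N_R(\,\cdot\,)$ for the closed $R$-neighbourhood; recall that the vertices of $\mathcal K(G,\mathcal A)$ are exactly the cosets in $G/\mathcal A$. \textbf{Step 1 (coarse rigidity of cosets in a reduced pair).} The crucial lemma is: if $(K,\mathcal B)$ is reduced and $k_1B_1,k_2B_2\in K/\mathcal B$ satisfy $\hdist(k_1B_1,k_2B_2)<\infty$, then $k_1B_1=k_2B_2$. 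I would first record the elementary fact that, for subgroups $H,H'\le K$ and $g\in K$, one has $\hdist(H,gH')<\infty$ if and only if there is $g_0\in gH'$ at bounded distance from the identity with $H$ commensurable to $g_0H'g_0^{-1}$; the nontrivial direction follows by translating so that $gH'$ meets a bounded ball and then counting, since word-metric balls are finite, a finite Hausdorff distance forces $H$ to be covered by finitely many cosets of $H\cap g_0H'g_0^{-1}$ and conversely. Applying this with $H=B_1$ and $gH'=k_1^{-1}k_2B_2$ gives $g_0\in k_1^{-1}k_2B_2$ with $B_1$ commensurable to $g_0B_2g_0^{-1}$; then $\Comm_K(B_1)=g_0\,\Comm_K(B_2)\,g_0^{-1}$ by Proposition~\ref{prop_ficom}, and reducedness ($\Comm_K(B_i)=B_i$) forces $B_1=g_0B_2g_0^{-1}$, so $B_1,B_2$ are conjugate, hence equal by pairwise non-conjugacy; finally $g_0\in\Comm_K(B_1)=B_1$ together with $g_0\in k_1^{-1}k_2B_1$ gives $k_1B_1=k_2B_1$.

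\textbf{Step 2 (the vertex bijection).} By the definition of a quasi-isometry of group pairs, for each $gA\in G/\mathcal A$ there is $kB\in K/\mathcal B$ with $\hdist(q(gA),kB)<M$; any two such cosets lie within Hausdorff distance $2M$ of each other, hence coincide by Step 1, so $\dot q(gA):=kB$ is well defined. The quasi-inverse $\bar q$ is again a quasi-isometry of group pairs, so it induces $\dot{\bar q}\colon K/\mathcal B\to G/\mathcal A$ in the same way. Since $\bar q\circ q$ is at bounded distance from $\mathrm{id}_G$ and $\bar q$ is coarsely Lipschitz, $\hdist\bigl(gA,\dot{\bar q}(\dot q(gA))\bigr)<\infty$, so $\dot{\bar q}\circ\dot q=\mathrm{id}$ by Step 1 applied inside $(G,\mathcal A)$, and symmetrically $\dot q\circ\dot{\bar q}=\mathrm{id}$. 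Thus $\dot q$ is a bijection on vertices with inverse $\dot{\bar q}$.

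\textbf{Step 3 (simpliciality).} It remains to show $\dot q$ and $\dot q^{-1}$ carry simplices to simplices, for which I would prove the algebraic lemma: for $g_0A_0,\dots,g_kA_k\in G/\mathcal A$, the group $\bigcap_{i=0}^k g_iA_ig_i^{-1}$ is infinite if and only if $\bigcap_{i=0}^k N_R(g_iA_i)$ is infinite for some $R$. For the forward direction, replace each $g_i$ by a nearest point $p_i\in g_iA_i$ to the identity (this does not change $g_iA_ig_i^{-1}$); then any $\ell$ in the intersection satisfies $\ell p_i\in g_iA_i$ with $d(\ell,\ell p_i)=d(e,p_i)$ uniformly bounded, so the intersection sits inside $\bigcap_i N_R(g_iA_i)$. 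For the converse, given infinitely many $x_n\in\bigcap_i N_R(g_iA_i)$, write $g_i^{-1}x_n=b_{n,i}t_{n,i}$ with $b_{n,i}\in A_i$ and $t_{n,i}$ in the finite ball of radius $R$; pass to a subsequence on which each $t_{n,i}=t_i$ is constant, so $x_nx_m^{-1}=g_i(b_{n,i}b_{m,i}^{-1})g_i^{-1}$ for every $i$, whence $\{x_nx_1^{-1}\}\subseteq\bigcap_i g_iA_ig_i^{-1}$, an infinite set since the $x_n$ are unbounded. The right-hand condition is inherited through $q$: if $\bigcap_i N_R(g_iA_i)$ is infinite, its image under $q$ is an infinite subset of $\bigcap_i N_{\lambda R+\mu+M}(\dot q(g_iA_i))$. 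Applying the lemma in $G$, pushing forward by $q$, and applying the lemma in $K$ shows $\dot q$ is simplicial; the same argument for $\bar q$ handles $\dot q^{-1}$, so $\dot q$ is a simplicial isomorphism.

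\textbf{Expected main obstacle.} The formal skeleton above is routine; the genuine work is Step 1 and the forward implication of the Step 3 lemma. One must translate cosets to the identity correctly and track additive constants honestly, and—more conceptually—pin down the exact point at which \emph{reducedness} (self-commensuration together with pairwise non-conjugacy) upgrades ``finite Hausdorff distance of cosets'' to ``equality of cosets''. Once that is secured, only bookkeeping with quasi-isometry constants remains.
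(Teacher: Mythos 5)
Your proof is correct. Note that the paper itself gives no argument for this statement --- it is quoted verbatim from \cite[Proposition 4.9(3)]{abbott2025quasiisometryinvariancecosetintersection} --- so there is no internal proof to compare against; your three steps (reducedness forces cosets at finite Hausdorff distance to coincide, hence a well-defined vertex bijection; the coarse reformulation of ``$\bigcap_i g_iA_ig_i^{-1}$ infinite'' as ``$\bigcap_i N_R(g_iA_i)$ infinite for some $R$'', which is manifestly preserved by a quasi-isometry of pairs) are exactly the ingredients of the standard proof in that reference, and each is carried out correctly here.
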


\begin{theorem}
	\label{thm_ModTHQqi}
	Let $H$ be a finitely generated group quasi-isometric to $\Mod(S)$. Then there is a finite collection $\mathcal{R}$ of subgroups of $H$ such that $(\Mod(S),\mathcal{T})$ is quasi-isometric to $(H,\mathcal{R})$. Moreover, the coset intersection complexes $\mathcal{K}(\Mod(S),\mathcal{T})$ and $\mathcal{K}(H,\mathcal{R})$ are isomorphic. 
\end{theorem}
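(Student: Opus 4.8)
The plan is to chain together the results accumulated so far in this section. First I would invoke Corollary~\ref{lem_HQModST} to obtain a finite collection $\mathcal{Q}$ of subgroups of $H$ such that $(H,\mathcal{Q})$ is quasi-isometric to $(\Mod(S),\mathcal{T})$. This gives the first assertion (taking $\mathcal{R}=\mathcal{Q}$ for now), but $(H,\mathcal{Q})$ need not be reduced, so Proposition~\ref{prop_reducedsimiso} does not yet apply. The second step is therefore to pass to reduced representatives on both sides.

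On the $\Mod(S)$ side, Proposition~\ref{prop_Treduced} already tells us $(\Mod(S),\mathcal{T})$ is reduced, so no adjustment is needed there. On the $H$ side, I would first note that $(\Mod(S),\mathcal{T})$ is reducible (again Proposition~\ref{prop_Treduced}), hence by Theorem~\ref{thm_reduqiinv} the quasi-isometric pair $(H,\mathcal{Q})$ is also reducible. Then Proposition~\ref{prop_refinement} produces the refinement $\mathcal{R}=\mathcal{Q}^*$ such that $(H,\mathcal{R})$ is reduced and virtually isomorphic to $(H,\mathcal{Q})$; by Proposition~\ref{prop_viqi}, $(H,\mathcal{Q})$ and $(H,\mathcal{R})$ are quasi-isometric group pairs. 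Composing quasi-isometries of group pairs, $(H,\mathcal{R})$ is quasi-isometric to $(\Mod(S),\mathcal{T})$, and this $\mathcal{R}$ is the collection claimed in the statement.

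For the final clause, both $(\Mod(S),\mathcal{T})$ and $(H,\mathcal{R})$ are now reduced and quasi-isometric as group pairs, so Proposition~\ref{prop_reducedsimiso} immediately yields a simplicial isomorphism $\mathcal{K}(\Mod(S),\mathcal{T})\cong\mathcal{K}(H,\mathcal{R})$. The only point requiring a little care is that ``quasi-isometric group pairs'' is an equivalence relation (stated in the excerpt after the definition of quasi-isometry of group pairs) and that virtual isomorphism implies it (Proposition~\ref{prop_viqi}), so that the two reductions compose correctly; this is the main thing to verify but it is routine given the cited results. No genuinely new argument is needed — the theorem is a bookkeeping assembly of Corollary~\ref{lem_HQModST}, Theorem~\ref{thm_reduqiinv}, Propositions~\ref{prop_refinement}, \ref{prop_viqi}, \ref{prop_Treduced}, and \ref{prop_reducedsimiso}.
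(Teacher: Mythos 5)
Your proposal is correct and follows exactly the same route as the paper's proof: obtain $\mathcal{Q}$ from Corollary~\ref{lem_HQModST}, transfer reducibility via Proposition~\ref{prop_Treduced} and Theorem~\ref{thm_reduqiinv}, pass to the refinement $\mathcal{R}=\mathcal{Q}^*$ using Propositions~\ref{prop_refinement} and \ref{prop_viqi}, and conclude with Proposition~\ref{prop_reducedsimiso}. No differences worth noting.
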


\begin{proof}
	By Corollary~\ref{lem_HQModST}, there is a collection $\mathcal{Q}$ of subgroups of $H$ such that $(H,\mathcal{Q})$ is quasi-isometric to $(\Mod(S),\mathcal{T})$, which is reducible by Proposition~\ref{prop_Treduced}. Then Theorem~\ref{thm_reduqiinv} implies that $(H,\mathcal{Q})$ is reducible, so it is quasi-isometric to $(H,\mathcal{Q}^*)$ by Propositions~\ref{prop_viqi} and \ref{prop_refinement}. Therefore, $(\Mod(S),\mathcal{T})$ and $(H,\mathcal{Q}^*)$ are quasi-isometric and reduced by Propositions~\ref{prop_Treduced} and \ref{prop_refinement}, so $\mathcal{K}(\Mod(S),\mathcal{T})$ and $\mathcal{K}(H,\mathcal{Q}^*)$ are isomorphic by Proposition~\ref{prop_reducedsimiso}. Setting $\mathcal{R}=\mathcal{Q}^*$ completes the proof. 
\end{proof}

\begin{proof}[Proof of Corollary~\ref{thm_gqimcg}]
	Theorem~\ref{thm_ModTHQqi} provides a group pair $(H,\mathcal{R})$ such that $\mathcal{K}(\Mod(S),\mathcal{T})$ is isomorphic to $\mathcal{K}(H,\mathcal{R})$, so they are both isomorphic to $\mathcal{N}(\mathcal{C}(S))$ by Theorem~\ref{thm_combequi}(1). Moreover, $\mathcal{N}(\mathcal{K}(\Mod(S),\mathcal{T}))$ and $\mathcal{N}(\mathcal{K}(H,\mathcal{R}))$ are isomorphic, so they are both isomorphic to $\mathcal{C}(S)$ by Theorem~\ref{thm_combequi}(2).
\end{proof}

\section{Quasi-isometry and homotopy equivalence}
\label{sec_qihe}

We prove Theorem~\ref{thm_qihe} in this section. We first recall the following classical result.

\begin{theorem}[Nerve theorem, {\cite[Lemma 1.1]{MR607041}}]
	\label{thm_NerveThm}
	Let $X$ be a simplicial complex. Let $\mathcal{U}=\{U_i\}_{i\in I}$ be a cover for $X$ such that every finite intersection of elements in $\mathcal{U}$ is either empty or contractible. Then $X$ is homotopy equivalent to the nerve of $\mathcal{U}$. 
\end{theorem}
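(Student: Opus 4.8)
This is a form of the classical Nerve Lemma, so the plan is to recall the standard proof via the \emph{Mayer--Vietoris blowup complex}, i.e.\ the homotopy colimit of the cover. Throughout I read the cover $\mathcal{U}$ of the simplicial complex $X$ as a cover by subcomplexes, so that all the inclusions appearing below are cofibrations; this is the setting in which the Nerve Theorem is applied in this section.

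First I would build the blowup complex
\[
\mathcal{M}=\bigl\{(t,x)\in |N(\mathcal{U})|\times X\ :\ x\in U_{\mathrm{supp}(t)}\bigr\},
\]
where, for $t\in|N(\mathcal{U})|$, $\mathrm{supp}(t)=\{i_0,\dots,i_k\}$ is the unique simplex of $N(\mathcal{U})$ whose open simplex contains $t$, and $U_{\mathrm{supp}(t)}=\bigcap_{j}U_{i_j}$. Equivalently, $\mathcal{M}=\bigcup_{\sigma\in N(\mathcal{U})}\Delta_\sigma\times U_\sigma$ inside $|N(\mathcal{U})|\times X$, where $\Delta_\sigma$ is the closed geometric simplex on $\sigma$ and $U_\sigma=\bigcap_{i\in\sigma}U_i$. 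Since $X$ and $N(\mathcal{U})$ are CW complexes, so is $\mathcal{M}$, and it carries two cellular projections $p\colon\mathcal{M}\to X$ and $q\colon\mathcal{M}\to N(\mathcal{U})$.

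Next I would show that $p$ and $q$ are both homotopy equivalences, which yields $X\simeq\mathcal{M}\simeq N(\mathcal{U})$ at once. For $q$: if $t$ lies in the open simplex of $\sigma$, then $q^{-1}(t)=\{t\}\times U_\sigma\cong U_\sigma$, which is contractible by hypothesis; filtering $N(\mathcal{U})$ by skeleta, a standard patching argument (a cellular map between CW complexes that restricts to a homotopy equivalence over each closed cell compatibly with its boundary is a homotopy equivalence) promotes this to $q$ being a homotopy equivalence. For $p$: the preimage of $x\in X$ is $p^{-1}(x)=\Delta_{\sigma(x)}$, the closed simplex on $\sigma(x)=\{i\in I:x\in U_i\}$, which is non-empty because $\mathcal{U}$ covers $X$, hence contractible; the same patching argument applied to a skeletal filtration of $X$ (here one uses that the $U_i$ are subcomplexes, so the relevant restrictions of $p$ are again covered by the lemma) shows $p$ is a homotopy equivalence.

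The main obstacle is the bookkeeping behind these two appeals to the patching/gluing lemma: one has to run the argument in the category of CW complexes with subcomplex inclusions so that every union in sight is a genuine homotopy pushout, and then verify the gluing lemma's hypotheses cell by cell. When $|I|<\infty$ this content can instead be packaged as an induction on $|I|$: writing $X=Y\cup U_n$ with $Y=\bigcup_{j<n}U_j$, one has $Y\cap U_n=\bigcup_{j<n}(U_j\cap U_n)$, whose finite intersections are again finite intersections of the $U_i$ and so are empty or contractible; the inductive hypothesis then gives $Y\simeq N(\{U_j\}_{j<n})$, $Y\cap U_n\simeq\mathrm{lk}_{N(\mathcal{U})}(n)$, and $U_n\simeq\mathrm{pt}$, and feeding these into the homotopy pushout squares $X=Y\cup_{Y\cap U_n}U_n$ and $N(\mathcal{U})=N(\{U_j\}_{j<n})\cup_{\mathrm{lk}(n)}\mathrm{star}(n)$ and invoking the gluing lemma for homotopy pushouts yields $X\simeq N(\mathcal{U})$; the infinite case is recovered by passing to the colimit, or directly from the blowup complex, which needs no finiteness. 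As the statement is classical, it suffices in the paper to invoke the cited reference rather than reproduce this argument.
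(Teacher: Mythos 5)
The paper does not prove this statement: it is quoted from the cited reference and used as a black box, its only role being Corollary~\ref{cor_cshmkt}, where it is applied to the cover of $\mathcal{C}(S)$ by its maximal simplices. Your sketch is the standard proof via the Mayer--Vietoris blowup complex (homotopy colimit of the cover) and is essentially sound; since there is no in-paper argument to compare against, let me just flag the two points where your outline carries real content. First, you were right to insert the hypothesis that $\mathcal{U}$ is a cover by subcomplexes: as literally stated, for an arbitrary cover the theorem is false, and the subcomplex hypothesis is both what the cited lemma assumes and what holds in the application. Second, the claim that $p\colon\mathcal{M}\to X$ is an equivalence ``by the same patching argument'' is the step that needs the subcomplex hypothesis in an essential way: over the open cell of a simplex $e$ of $X$, the map $p$ is a trivial bundle whose fibre is the (possibly infinite-dimensional) full simplex on $I_e=\{i\in I: e\subseteq U_i\}$, and $I_e\neq\emptyset$ precisely because an interior point of $e$ lying in a subcomplex $U_i$ forces $e\subseteq U_i$; with that observation the gluing-lemma induction over the skeleta of $X$, followed by a colimit over skeleta, does go through. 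Note also that your alternative induction on $|I|$ only treats finite covers, whereas the cover by maximal simplices in Corollary~\ref{cor_cshmkt} is infinite and its nerve is infinite-dimensional, so for the paper's purposes the blowup (or colimit) argument is the one that actually applies.
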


\begin{cor}
	\label{cor_cshmkt}
	The curve complex $\mathcal{C}(S)$ is homotopy equivalent to $\mathcal{K}(\Mod(S),\mathcal{T})$. 
\end{cor}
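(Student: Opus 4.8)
The plan is to realise $\mathcal{C}(S)$ as the nerve of a cover by its own maximal simplices and then transport the conclusion through Theorem~\ref{thm_combequi}(1). Concretely, I would take the cover $\mathcal{U}=\{\,\lvert P\rvert : P\in\mathcal{P}\,\}$ of the geometric realisation of $\mathcal{C}(S)$, where $\lvert P\rvert$ denotes the closed simplex spanned by a maximal simplex $P$ (equivalently, $P$ ranges over the pants decompositions of $S$). This is genuinely a cover: because $\mathcal{C}(S)$ is finite-dimensional of dimension $3g-3$, every simplex of $\mathcal{C}(S)$ extends to a pants decomposition, so the union of the closed maximal simplices is all of $\lvert\mathcal{C}(S)\rvert$.

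Next I would verify the contractibility hypothesis of the Nerve theorem. A single closed simplex is contractible, and for maximal simplices $P_0,\dots,P_k$ the set $\tau=\bigcap_{i=0}^k P_i$ of common vertices is a face of each $P_i$, hence a simplex of $\mathcal{C}(S)$ (or empty); the standard fact that $\bigcap_{i=0}^k\lvert P_i\rvert=\lvert\tau\rvert$ for simplicial complexes then shows that every finite intersection of members of $\mathcal{U}$ is either empty or a closed simplex, hence empty or contractible. Applying Theorem~\ref{thm_NerveThm} gives that $\mathcal{C}(S)$ is homotopy equivalent to the nerve of $\mathcal{U}$. By construction the nerve of $\mathcal{U}$ has the maximal simplices of $\mathcal{C}(S)$ as vertices, with $\{P_0,\dots,P_k\}$ a simplex exactly when $\bigcap_{i=0}^k P_i\neq\emptyset$ (using $\bigcap\lvert P_i\rvert=\lvert\bigcap P_i\rvert$), which is precisely $\mathcal{N}(\mathcal{C}(S))$. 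Finally, Theorem~\ref{thm_combequi}(1) provides a simplicial isomorphism $\mathcal{N}(\mathcal{C}(S))\cong\mathcal{K}(\Mod(S),\mathcal{T})$, so $\mathcal{C}(S)$ is homotopy equivalent to $\mathcal{K}(\Mod(S),\mathcal{T})$.

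I do not expect a serious obstacle here: the only points needing care are that the closed maximal simplices actually cover $\mathcal{C}(S)$ (which uses finite-dimensionality) and that all finite intersections in $\mathcal{U}$ are faces, and both are elementary facts about finite-dimensional simplicial complexes. All the real content is packaged in the Nerve theorem (Theorem~\ref{thm_NerveThm}) and in Theorem~\ref{thm_combequi}(1), both already available.
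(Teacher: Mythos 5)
Your proposal is correct and follows essentially the same route as the paper: cover $\mathcal{C}(S)$ by its (closed) maximal simplices, observe that all finite intersections are empty or simplices and hence contractible, apply the Nerve theorem (Theorem~\ref{thm_NerveThm}) to get $\mathcal{C}(S)\simeq\mathcal{N}(\mathcal{C}(S))$, and conclude via Theorem~\ref{thm_combequi}(1). The only difference is that you spell out the covering and intersection facts in more detail than the paper does.
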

\begin{proof}
	Recall that $\mathcal{P}$ denotes the collection of all maximal simplices of $\mathcal{C}(S)$.  The intersection of any two elements in $\mathcal{P}$ is either empty or a simplex, so every finite intersection of elements in $\mathcal{P}$ is either empty or a simplex.  By the nerve theorem, the curve complex $\mathcal{C}(S)$ is homotopy equivalent to $\mathcal{N}(\mathcal{C}(S))$. Then the corollary follows from Theorem~\ref{thm_combequi}(1). 
\end{proof}

\begin{prop}
	\label{prop_csqik}
	The curve complex $\mathcal{C}(S)$ is quasi-isometric to $\mathcal{K}(\Mod(S),\mathcal{T})$.
\end{prop}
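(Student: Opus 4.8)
The plan is to build a quasi-isometry directly between the vertex sets, using the combinatorial identification from Theorem~\ref{thm_combequi}(1) together with the fact that $\mathcal{C}(S)$ is connected of diameter $\ge 3$ and that distances in $\mathcal{N}(\mathcal{C}(S))$ are controlled (up to bounded multiplicative and additive error) by distances in $\mathcal{C}(S)$. Concretely, define a map $\Phi\colon \mathcal{C}(S)^{[0]}\to \mathcal{K}(\Mod(S),\mathcal{T})^{[0]}=\Mod(S)/\mathcal{T}$ by sending a vertex $x$ of $\mathcal{C}(S)$ to some maximal simplex $P$ of $\mathcal{C}(S)$ containing $x$ (equivalently, via $\varphi^{-1}$ of Theorem~\ref{thm_combequi}(1), to the coset $g\Stab(P_i)$ with $g(P_i)=P$); a quasi-inverse $\Psi$ sends a coset $g\Stab(P_i)$, i.e.\ a maximal simplex $P=g(P_i)$ of $\mathcal{C}(S)$, to any one of its vertices. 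One checks $d_{\mathcal{C}(S)}(x,\Psi\Phi(x))\le 1$ and $d_{\mathcal{N}(\mathcal{C}(S))}(P,\Phi\Psi(P))\le 1$, so coarse surjectivity and the near-identity conditions are automatic; the content is the quasi-isometry inequality.

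The key estimate I would isolate as a lemma is: for maximal simplices $P,P'$ of $\mathcal{C}(S)$ and any $x\in P$, $x'\in P'$,
\[
d_{\mathcal{N}(\mathcal{C}(S))}(P,P')-1 \;\le\; d_{\mathcal{C}(S)}(x,x') \;\le\; 2\,d_{\mathcal{N}(\mathcal{C}(S))}(P,P')+1 .
\]
The left inequality follows because a path $P=R_0,R_1,\dots,R_m=P'$ of maximal simplices with consecutive ones intersecting lets one pick $y_j\in R_j\cap R_{j+1}$, giving a path $x,y_0,y_1,\dots,y_{m-1},x'$ in $\mathcal{C}(S)$ of length $m+1$. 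For the right inequality, given a geodesic $x=z_0,z_1,\dots,z_m=x'$ in $\mathcal{C}(S)$, each edge $\{z_j,z_{j+1}\}$ lies in some maximal simplex $R_j$, and $R_j\cap R_{j+1}\ni z_{j+1}$, so $P,R_0,R_1,\dots,R_{m-1},P'$ is a path in $\mathcal{N}(\mathcal{C}(S))$ of length $m+1$ (the extra factor of $2$ is not even needed here, but I keep it as slack to absorb the case $m=0$). This shows $\Phi$ is a quasi-isometry onto $\mathcal{N}(\mathcal{C}(S))$, which is isomorphic to $\mathcal{K}(\Mod(S),\mathcal{T})$ by Theorem~\ref{thm_combequi}(1), and simplicial isomorphisms are isometries on the $0$-skeleta; since each complex is quasi-isometric to its own $0$-skeleton (both are connected simplicial complexes), the proposition follows.

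The main obstacle is purely bookkeeping: making sure that "distance in a simplicial complex'' consistently means distance in the $1$-skeleton graph, that the chosen lifts $\Phi,\Psi$ are genuine functions (a choice of a maximal simplex through each vertex — this uses that every vertex of $\mathcal{C}(S)$ lies in a pants decomposition), and that the degenerate low-distance cases ($P=P'$ or $x=x'$) do not break the inequalities — which is why the $+1$ and factor $2$ slack is built in. No deep input beyond Theorem~\ref{thm_combequi}(1) and connectedness of $\mathcal{C}(S)$ is required; in particular, unlike the homotopy statement, this does not even need the full nerve theorem, only the elementary path-transfer argument above. Alternatively, one can phrase the whole argument as: $\mathcal{C}(S)$ and $\mathcal{N}(\mathcal{C}(S))$ are quasi-isometric because each is a "nerve-type'' refinement of the other with uniformly bounded fibres, and then invoke Theorem~\ref{thm_combequi}(1); I would present the explicit two-sided inequality since it is short and self-contained.
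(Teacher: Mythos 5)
Your proof is correct and is essentially the paper's argument: both transfer paths between $\mathcal{C}(S)$ and the nerve of its maximal simplices using the fact that adjacent maximal simplices share a vertex and adjacent vertices share a maximal simplex, the only packaging difference being that the paper sends cosets of $\mathcal{T}$ to barycentres in $\bsd(\mathcal{C}(S))$ (making the map canonical and choice-free) while you choose a maximal simplex through each vertex. One small slip: your two justifications are attached to the wrong inequalities --- the path-in-$\mathcal{N}(\mathcal{C}(S))$-to-path-in-$\mathcal{C}(S)$ argument proves $d_{\mathcal{C}(S)}(x,x')\le d_{\mathcal{N}(\mathcal{C}(S))}(P,P')+1$ (the right-hand bound, with no factor $2$ needed) and the geodesic-in-$\mathcal{C}(S)$ argument proves $d_{\mathcal{N}(\mathcal{C}(S))}(P,P')\le d_{\mathcal{C}(S)}(x,x')+1$ (the left-hand bound) --- but the two arguments together do establish the two-sided estimate, so nothing is actually missing.
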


\begin{proof}
	Denote the barycentric subdivision of a simplicial complex $X$ as $\bsd(X)$. Since $\mathcal{C}(S)$ is connected, by Corollary~\ref{cor_cshmkt}, $\mathcal{K}(\Mod(S),\mathcal{T})$ is also connected. Following Bridson \cite{MR1170372}, it suffices to show that $\mathcal{K}(\Mod(S),\mathcal{T})^{[1]}$ and $\bsd(\mathcal{C}(S))^{[1]}$ are quasi-isometric. We regard $\bsd(\mathcal{C}(S))^{[1]}$ and $\mathcal{K}(\Mod(S),\mathcal{T})^{[1]}$ as geodesic metric spaces with metrics $d_{\mathcal{C}}$ and $d_{\mathcal{T}}$, where each edge is regarded as a segment of length one. 
	
	Recall that $P_1,\cdots,P_n$ are representatives of the $\Mod(S)$-orbits of the action of $\Mod(S)$ on the set $\mathcal{P}$ of maximal simplices of $\mathcal{C}(S)$. Define $\Phi\colon\mathcal{K}(\Mod(S),\mathcal{T})^{[0]}\to\bsd(\mathcal{C}(S))^{[0]}$ by mapping the vertex $g\Stab(P_i)$ to the barycentre of the simplex $g(P_i)$ in $\mathcal{C}(S)$, where $1\le i\le n$. We will show the following three properties, then one can verify that $\Phi$ defines a quasi-isometry $\mathcal{K}(\Mod(S),\mathcal{T})^{[1]}\to\bsd(\mathcal{C}(S))^{[1]}$:
	\begin{enumerate} 
		\item For each vertex $\alpha$ of $\bsd(\mathcal{C}(S))$, there is a vertex $gG$ of $\mathcal{K}(\Mod(S),\mathcal{T})$ such that $d_{\mathcal{C}}(\alpha,\Phi(gG))\le1$. 
		\item If two vertices $g\Stab(P_i)$ and $f\Stab(P_j)$ are adjacent in $\mathcal{K}(\Mod(S),\mathcal{T})$, then we have $d_{\mathcal{C}}(\Phi(g\Stab(P_i)),\Phi(f\Stab(P_j)))\le2$.
		\item For each pair $g\Stab(P_i)$ and $f\Stab(P_j)$ of vertices of $\mathcal{K}(\Mod(S),\mathcal{T})$ such that the distance $d_{\mathcal{C}}(\Phi(g\Stab(P_i)),\Phi(f\Stab(P_j)))$ is at most two, then $d_{\mathcal{T}}(g\Stab(P_i),f\Stab(P_j))\le1$.
	\end{enumerate}
	
	For (1), if $\alpha$ is a barycentre of some maximal simplex $\tau$ of $\mathcal{C}(S)$, then there are $x\in\Mod(S)$ and $Q\in\{P_1,\cdots,P_n\}$ such that $x(Q)=\tau$, so $\Phi(x\Stab(Q))=x(Q)$. This means $\Image(\Phi)$ is the set of all barycentres of maximal simplices in $\mathcal{C}(S)$. If $\alpha\in\bsd(\mathcal{C}(S))^{[0]}-\Image(\Phi)$, then there is a barycentre of some maximal simplex of $\mathcal{C}(S)$ that is adjacent to $\alpha$, so $d_{\mathcal{C}}(\alpha,\Phi(yR))\le1$ for some vertex $yR$ of $\mathcal{K}(\Mod(S),\mathcal{T})$.
	
	To show (2), let $g\Stab(P_i)$ and $f\Stab(P_j)$ be adjacent vertices in $\mathcal{K}(\Mod(S),\mathcal{T})$. Then $d_{\mathcal{C}}(\Phi(g\Stab(P_i)),\Phi(f\Stab(P_j)))\le2$ since, by Lemma~\ref{lem_comgen}, there is a vertex $\gamma\in g(P_i)\cap f(P_j)$, and therefore there is a path of length two in $\bsd(\mathcal{C}(S))$ between $\Phi(g\Stab(P_i))$ and $\Phi(f\Stab(P_j))$.  
	
	For (3), let $g\Stab(P_i)$ and $f\Stab(P_j)$ be vertices of $\mathcal{K}(\Mod(S),\mathcal{T})$ such that the distance between $\Phi(g\Stab(P_i))$ and $\Phi(f\Stab(P_j))$ in $\bsd(\mathcal{C}(S))$ is at most two. Since $\Phi(g\Stab(P_i))$ and $\Phi(f\Stab(P_j))$ are barycentres of maximal simplices in $\mathcal{C}(S)$, their distance is either zero or two. If the distance is zero, then $f\Stab(P_i)=g\Stab(P_j)$ by injectivity of $\Phi$. If the distance is two, then there is a vertex $\beta\in g(P_i)\cap f(P_j)$, so Lemma~\ref{lem_comgen} implies that $\{g\Stab(P_i),f\Stab(P_j)\}$ is a simplex in $\mathcal{K}(\Mod(S),\mathcal{T})$. This concludes the proof.
\end{proof}

\begin{prop}
	\label{csqih}
	The curve complex $\mathcal{C}(S)$ is quasi-isometric and homotopy equivalent to the coset intersection complex $\mathcal{K}(\Mod(S),\mathcal{H})$.
\end{prop}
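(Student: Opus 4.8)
The plan is to realise $\mathcal{K}(\Mod(S),\mathcal{H})$ as a ``simplicial blow-up'' of $\mathcal{K}(\Mod(S),\mathcal{T})$ and then to reduce everything to what has already been proved for $\mathcal{T}$, namely Corollary~\ref{cor_cshmkt} and Proposition~\ref{prop_csqik}. First I would introduce the natural map $\pi\colon\mathcal{K}(\Mod(S),\mathcal{H})\to\mathcal{K}(\Mod(S),\mathcal{T})$ sending a vertex $gH_i$ to $g\Stab(P_i)$; this is well defined because $H_i\le\Stab(P_i)$ (that the index $i$, equivalently the subgroup $H_i$, is recovered from the coset $gH_i$ follows, as in Section~\ref{sec_ncskst}, from the fact that the single-curve Dehn twists lying in $H_i$ are exactly $\{T_a\mid a\in P_i\}$, by Lemmata~\ref{lem_Dtp} and \ref{lem_3.17FM12}). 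The key algebraic input is the identity $\sigma H_i\sigma^{-1}=H_i$ for every $\sigma\in\Stab(P_i)$, which follows from Lemma~\ref{lem_Dtp}(3) together with $\sigma(P_i)=P_i$. Its consequence is that the conjugate subgroup $gH_ig^{-1}$ depends only on the coset $g\Stab(P_i)$; combining this with Lemma~\ref{lem_comgen} I would record the combinatorial dictionary: a finite set of distinct cosets $\{g_0H_{i_0},\dots,g_kH_{i_k}\}$ spans a simplex of $\mathcal{K}(\Mod(S),\mathcal{H})$ if and only if $\bigcap_{l}g_l\Stab(P_{i_l})g_l^{-1}$ is infinite, a condition that only sees the cosets $g_l\Stab(P_{i_l})$. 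From this I would deduce three facts: $\pi$ is simplicial; a choice of one preimage per vertex of $\mathcal{K}(\Mod(S),\mathcal{T})$ defines a simplicial section $s$ with $\pi\circ s=\mathrm{id}$; and each fibre $\pi^{-1}(g\Stab(P_i))=\{g\sigma H_i\mid\sigma\in\Stab(P_i)\}$ is itself a simplex of $\mathcal{K}(\Mod(S),\mathcal{H})$, since its vertices share the infinite conjugate subgroup $gH_ig^{-1}$, and it is finite by Lemma~\ref{lem_Hfistab}.

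For the homotopy equivalence I would argue that the simplicial self-maps $\mathrm{id}$ and $s\circ\pi$ of $\mathcal{K}(\Mod(S),\mathcal{H})$ are contiguous: given a simplex $\tau=\{v_0,\dots,v_k\}$, each $s\pi(v_l)$ lies in the same $\pi$-fibre as $v_l$ and hence has the same conjugate subgroup, so the intersection of conjugate subgroups over $\tau\cup s\pi(\tau)$ equals that over $\tau$ and is infinite; thus $\tau\cup s\pi(\tau)$ is a simplex. Contiguity yields $|s|\circ|\pi|\simeq\mathrm{id}$, and $|\pi|\circ|s|=\mathrm{id}$, so $|\pi|$ is a homotopy equivalence. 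Chaining with Corollary~\ref{cor_cshmkt} then gives $\mathcal{C}(S)\simeq\mathcal{K}(\Mod(S),\mathcal{T})\simeq\mathcal{K}(\Mod(S),\mathcal{H})$.

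For the quasi-isometry I would pass to $1$-skeleta. Since $\pi$ carries each edge to an edge or a point it is $1$-Lipschitz on $\mathcal{K}(\Mod(S),\mathcal{H})^{[1]}$, and since $s$ carries each edge to an edge it too is $1$-Lipschitz; as $\pi\circ s=\mathrm{id}$ this forces $s$ to preserve distances between vertices exactly, and every vertex $v$ satisfies $d(v,s\pi(v))\le1$ because $v$ and $s\pi(v)$ lie in a common fibre, which is a clique. Hence $s$ is a quasi-isometry $\mathcal{K}(\Mod(S),\mathcal{T})^{[1]}\to\mathcal{K}(\Mod(S),\mathcal{H})^{[1]}$; I would also note that $\mathcal{K}(\Mod(S),\mathcal{H})$ is connected, by lifting edge-paths along $\pi$ using the adjacency criterion above. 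Following Bridson \cite{MR1170372} as in the proof of Proposition~\ref{prop_csqik}, it then suffices to compose with the quasi-isometry $\mathcal{K}(\Mod(S),\mathcal{T})^{[1]}\to\bsd(\mathcal{C}(S))^{[1]}$ constructed there.

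I expect the main obstacle to be conceptual rather than technical: recognising that passing from $\mathcal{T}$ to $\mathcal{H}$ amounts to replacing each vertex of $\mathcal{K}(\Mod(S),\mathcal{T})$ by the finite simplex on $\Stab(P_i)/H_i$, and that this operation is at once a quasi-isometry, because the fibres are bounded, and a homotopy equivalence, because the fibres are contractible and the collapse is realised through contiguity. Once that picture is in place, each individual verification reduces to Lemma~\ref{lem_comgen}, the identity $\sigma H_i\sigma^{-1}=H_i$, and the finite-index statement of Lemma~\ref{lem_Hfistab}, together with the results of the previous sections on $\mathcal{T}$.
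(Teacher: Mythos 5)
Your proof is correct, but it takes a genuinely different route from the paper's. The paper disposes of this proposition in three lines of citations: since $H_i$ is a finite-index subgroup of $\Stab(P_i)=\Comm(H_i)$ (Lemma~\ref{lem_Hfistab} and Proposition~\ref{prop_Treduced}), the pairs $(\Mod(S),\mathcal{H})$ and $(\Mod(S),\mathcal{T})$ are virtually isomorphic, hence quasi-isometric as group pairs by Proposition~\ref{prop_viqi}, and Theorem~\ref{thm_qicicqihe} (imported from \cite{abbott2025quasiisometryinvariancecosetintersection}) then produces a simplicial map between the two coset intersection complexes that is simultaneously a quasi-isometry and a homotopy equivalence; combining with Corollary~\ref{cor_cshmkt} and Proposition~\ref{prop_csqik} finishes the argument. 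You instead build that map by hand as the collapse $gH_i\mapsto g\Stab(P_i)$ and verify everything directly. Your argument is sound: the identity $\sigma H_i\sigma^{-1}=H_i$ for $\sigma\in\Stab(P_i)$ (which is stronger than the commensurability the paper invokes --- $H_i$ is in fact normal in $\Stab(P_i)$) makes the conjugate $gH_ig^{-1}$ an invariant of the coset $g\Stab(P_i)$, Lemma~\ref{lem_comgen} supplies the adjacency dictionary, Lemma~\ref{lem_Hfistab} makes the fibres finite simplices, and contiguity of $\mathrm{id}$ and $s\circ\pi$ is exactly the right tool for the homotopy statement. What the paper's route buys is brevity and generality: the same citations handle any virtually isomorphic pair, with no normality available. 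What yours buys is a self-contained proof with explicit constants --- $s$ is an isometric embedding of $1$-skeleta with $1$-dense image --- that avoids leaning on the external Theorem~\ref{thm_qicicqihe} and makes visible the concrete picture of $\mathcal{K}(\Mod(S),\mathcal{H})$ as $\mathcal{K}(\Mod(S),\mathcal{T})$ with each vertex blown up into the finite simplex on $\Stab(P_i)/H_i$.
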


\begin{proof}
	Corollary~\ref{cor_cshmkt} and Proposition~\ref{prop_csqik} shows that $\mathcal{C}(S)$ is quasi-isometric and homotopy equivalent to $(\Mod(S),\mathcal{T})$. By Proposition~\ref{prop_Treduced}, $\Comm(H_i)=\Stab(P_i)$ for each $1\le i\le n$, so $(\Mod(S),\mathcal{H})$ and $(\Mod(S),\mathcal{T})$ are virtually isomorphic. It follows from Proposition~\ref{prop_viqi} that these two group pairs are quasi-isometric, and hence $\mathcal{K}(\Mod(S),\mathcal{H})$ and $\mathcal{K}(\Mod(S),\mathcal{T})$ are quasi-isometric and homotopy equivalent by Theorem~\ref{thm_qicicqihe} below.
\end{proof}

\begin{theorem}[{\cite[Theorem 1.4]{MR5034311}}]
	\label{thm_qicicqihe}
	Let $q\colon(G,\mathcal{A})\to(H,\mathcal{B})$ be a quasi-isometry of group pairs. Then $q$ induces a map $\dot{q}\colon\mathcal{K}(G,\mathcal{A})\to\mathcal{K}(H,\mathcal{B})$ that is a quasi-isometry and homotopy equivalent simplicial map. 
\end{theorem}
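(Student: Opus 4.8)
The plan is to build $\dot{q}$ at the level of vertices from the coset--preservation clause in the definition of a quasi-isometry of group pairs, check that it is simplicial by translating the simplex condition of a coset intersection complex into coarse geometry, and then promote it to a quasi-isometry and a homotopy equivalence by playing it off against a quasi-inverse. Concretely, fix constants $\lambda,\mu,C,M$ for $q$ and likewise for a chosen quasi-inverse $\bar{q}\colon(H,\mathcal{B})\to(G,\mathcal{A})$, which is again a quasi-isometry of group pairs. For each $gA\in G/\mathcal{A}$ pick $kB\in H/\mathcal{B}$ with $\hdist(q(gA),kB)<M$ and set $\dot{q}(gA)=kB$; any two admissible choices lie at Hausdorff distance $<2M$ in $H$, and this bounded ambiguity must be carried along but, as will be seen, any two choices end up uniformly close in $\mathcal{K}(H,\mathcal{B})$, so it does not affect the statement.

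\emph{Simplicial.} The heart of the argument is a dictionary: a finite set $\{g_0A_0,\dots,g_kA_k\}$ spans a simplex of $\mathcal{K}(G,\mathcal{A})$, that is $L:=\bigcap_{i}g_iA_ig_i^{-1}$ is infinite, if and only if the coarse intersection of the subsets $g_0A_0,\dots,g_kA_k$ of $G$ is unbounded; moreover this coarse intersection is itself a well-defined coarse-geometric invariant of the collection of cosets. One implication is immediate: if $L$ is infinite and $h\in\bigcap_i g_iA_i$, then $Lh\subseteq\bigcap_i g_iA_i$ is infinite. The converse, and the well-definedness, require a bounded-packing / finite-height type argument, which is precisely where the structure of a group pair and the coset-preservation axioms are used. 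Granting the dictionary, a $(\lambda,\mu,C)$-quasi-isometry sends the coarse intersection of the $g_iA_i$ to within bounded Hausdorff distance of the coarse intersection of the $q(g_iA_i)$, hence of the $\dot{q}(g_iA_i)$, distorting diameters only by the fixed affine factor; thus unbounded coarse intersections go to unbounded ones, so $\dot{q}$ carries simplices to simplices. The same reasoning applied to $\bar{q}$ produces a simplicial map $\dot{\bar{q}}$ in the reverse direction.

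\emph{Quasi-isometry and homotopy equivalence.} Passing to barycentric subdivisions and arguing on each connected component, one checks that $\dot{\bar{q}}\circ\dot{q}$ and $\dot{q}\circ\dot{\bar{q}}$ lie at bounded distance from the respective identity maps on vertices, because $\bar{q}\circ q$ and $q\circ\bar{q}$ are at bounded distance from the identities on $G$ and $H$; together with the Lipschitz control from the previous step this shows $\dot{q}$ restricts to a quasi-isometry of $1$-skeleta, hence is a quasi-isometry of complexes. For the homotopy equivalence one realises $\mathcal{K}(G,\mathcal{A})$, up to homotopy, as the nerve of the cover of $G$ by uniform neighbourhoods $N_r(gA)$ with $r$ chosen large enough that nonempty finite intersections of these sets reflect exactly the unbounded finite coarse intersections of the cosets and are contractible; the quasi-isometry $q$ pushes this cover forward to a comparable cover of $H$, inducing a homotopy equivalence of nerves compatible with $\dot{q}$. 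Alternatively, one shows directly that $\dot{q}$ induces isomorphisms on all homotopy groups using the coarse inverse $\dot{\bar{q}}$ together with a mapping-cylinder argument.

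I expect the main obstacle to be the dictionary in the simplicial step: proving that a collection of cosets spans a simplex exactly when their coarse intersection is unbounded, and that this coarse intersection is genuinely a quasi-isometry invariant of the pair, is where the real work lies, with the homotopy-theoretic bookkeeping of the third step a close second; the vertex-map ambiguity of the first step is harmless but must be tracked throughout.
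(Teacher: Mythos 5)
The paper does not prove this statement at all: it is imported verbatim as \cite[Theorem 1.4]{abbott2025quasiisometryinvariancecosetintersection}, so there is no in-paper argument to compare yours against; I can only judge the proposal against what a proof would have to contain.

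For the construction of $\dot q$ and the quasi-isometry clause, your architecture is reasonable, and you correctly identify the ``dictionary'' ($\bigcap_i g_iA_ig_i^{-1}$ infinite if and only if $\bigcap_i N_r(g_iA_i)$ unbounded for large $r$) as the crux — but you assume it rather than prove it, and you misdiagnose its difficulty. No bounded packing or finite height is needed: if $x=g_ia_iu_i$ and $x'=g_ia_i'u_i$ lie in $\bigcap_iN_r(g_iA_i)$ with the same tuple $(u_0,\dots,u_k)$ of elements of length at most $r$, then $x'x^{-1}\in\bigcap_ig_iA_ig_i^{-1}=L$, so $\bigcap_iN_r(g_iA_i)$ is covered by finitely many right cosets of $L$ and is bounded whenever $L$ is finite; conversely $Lx_0$ lies in a uniform neighbourhood of each $g_iA_i$ for any basepoint $x_0$. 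With that lemma supplied, your simplicial and quasi-isometry claims go through.

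The homotopy-equivalence step, however, contains a genuine gap, and neither of your two proposed routes closes it. First, the nerve of the cover of $G$ by the sets $N_r(gA)$ is not $\mathcal{K}(G,\mathcal{A})$: the nerve records \emph{nonempty} finite intersections while the simplex condition requires the conjugate intersection to be \emph{infinite}, and no single $r$ reconciles the two (already for two cosets one can have $N_r(gA)\cap N_r(hB)\neq\emptyset$ with $gAg^{-1}\cap hBh^{-1}$ finite, e.g.\ adjacent cosets of a cyclic subgroup of a free group); moreover the pieces $N_r(gA)$ are quasi-isometric to $A$ and need not be contractible, and a good cover of the Cayley graph of $G$ would in any case compute the homotopy type of that Cayley graph (a wedge of circles), not of $\mathcal{K}(G,\mathcal{A})$, which for $(\Mod(S),\mathcal{T})$ is a wedge of higher-dimensional spheres. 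Second, your fallback — that $\dot{\bar q}\circ\dot q$ is at bounded distance from the identity and hence induces the identity on homotopy groups — is false in general: two simplicial self-maps at bounded distance need not be homotopic (the identity and the antipodal map on a triangulated sphere are at bounded distance), which is precisely why the homotopy-equivalence clause is the hard part of the theorem. A workable route, suggested by the other results the present paper quotes from the same source (Propositions~\ref{prop_refinement} and~\ref{prop_reducedsimiso}), passes through the reduction $\mathcal{A}\mapsto\mathcal{A}^*$: one compares $\mathcal{K}(G,\mathcal{A})$ with $\mathcal{K}(G,\mathcal{A}^*)$ up to homotopy and uses that for reduced pairs the induced map is a simplicial isomorphism. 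Nothing playing that role appears in your sketch.
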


\begin{proof}[Proof of Theorem~\ref{thm_qihe}]
	This follows from Corollary~\ref{cor_cshmkt}, Propositions~\ref{prop_csqik} and \ref{csqih}.
\end{proof}

\section{Curve complex as a subcomplex of a coset intersection complex}
\label{sec_ccsubcic}

We have shown that the nerve of the curve complex is isomorphic to a coset intersection complex, so it is natural to consider the following question. 

\begin{qtn}
	Is the curve complex $\mathcal{C}(S)$ isomorphic, as a simplicial complex, to a coset intersection complex of some group pair about $\Mod(S)$?
\end{qtn}

Whilst we are unable to answer the question above, we construct a coset intersection complex that contains the curve complex as a subcomplex, see Theorem~\ref{thm_ccincic} below. 

\begin{lemma}
	\label{lem_commtastaba}
	Let $a\in\mathcal{C}^{[0]}(S)$. Then $\Comm(\langle T_a\rangle)=\Stab(a)$.
\end{lemma}
\begin{proof}
	Let $g\in\Comm(\langle T_a\rangle)$. Then $g\langle T_a\rangle g^{-1}\cap\langle T_a\rangle$ is finite-index in both $g\langle T_a\rangle g^{-1}=\langle T_{g(a)}\rangle$ and $\langle T_a\rangle$, so Lemma~\ref{lem_comgen} implies that $g(a)=a$, i.e. $g\in\Stab(a)$. Conversely, let $g\in\Stab(a)$. Then $g(a)=a$, so $g\langle T_a\rangle g^{-1}=\langle T_{g(a)}\rangle=\langle T_a\rangle$, which implies that $g\in\Comm(\langle T_a\rangle)$. 
\end{proof}

The action of $\Mod(S)$ on the vertex set $\mathcal{C}^{[0]}(S)$ of curve complex has finitely many orbits, say $m$. Let $b_1,\cdots,b_m$ be  representatives of these orbits. 

\begin{theorem}
	\label{thm_ccincic}
	Let $\mathcal{B}=\{\Comm(\langle T_{b_i}\rangle)\mid 1\le i\le m\}$. Then there is a $\Mod(S)$-equivariant simplicial embedding $\mathcal{C}(S)\to\mathcal{K}(\Mod(S),\mathcal{B})$ such that $\mathcal{C}^{[0]}(S)$ is isomorphic to $\Mod(S)/\mathcal{B}$. 
\end{theorem}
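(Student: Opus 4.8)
The plan is to define the candidate map on vertices using Lemma~\ref{lem_commtastaba}, and then verify it is a well-defined, injective, $\Mod(S)$-equivariant simplicial embedding with the stated property on vertex sets. First I would set up the vertex map: for a vertex $a$ of $\mathcal{C}(S)$, there is a unique $1\le i\le m$ and some $g\in\Mod(S)$ with $g(b_i)=a$; send $a$ to the coset $g\,\Comm(\langle T_{b_i}\rangle)\in\Mod(S)/\mathcal{B}$. Well-definedness follows from Lemma~\ref{lem_commtastaba}: if $g(b_i)=h(b_i)$ then $h^{-1}g\in\Stab(b_i)=\Comm(\langle T_{b_i}\rangle)$, so the coset does not depend on the choice of $g$, and the index $i$ is forced by the orbit of $a$. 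The same computation shows the map is injective, and it is a bijection $\mathcal{C}^{[0]}(S)\to\Mod(S)/\mathcal{B}$ since every coset $g\,\Comm(\langle T_{b_i}\rangle)$ is the image of $g(b_i)$; $\Mod(S)$-equivariance is immediate from the formula $f\cdot(g\,\Comm(\langle T_{b_i}\rangle)) = (fg)\,\Comm(\langle T_{b_i}\rangle)$, which corresponds to $f(a)$.

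The simplicial-embedding step is the substantive part. I must show that a finite set $\{a_0,\dots,a_k\}$ of vertices of $\mathcal{C}(S)$ is a simplex (i.e. the $a_j$ are pairwise disjoint) \emph{only if} the corresponding cosets form a simplex of $\mathcal{K}(\Mod(S),\mathcal{B})$, i.e. only if $\bigcap_{j} g_j\,\Comm(\langle T_{b_{i_j}}\rangle)\,g_j^{-1}$ is infinite, where $g_j(b_{i_j})=a_j$. (For a genuine simplicial embedding I should ideally also check the converse direction on the image, but the theorem only claims an embedding onto a subcomplex, so it suffices to show the ``only if''.) The key observation is that $g_j\,\Comm(\langle T_{b_{i_j}}\rangle)\,g_j^{-1} = \Comm(\langle T_{a_j}\rangle) = \Stab(a_j)$ by Proposition~\ref{prop_ficom}(1) and Lemma~\ref{lem_commtastaba}. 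So I need $\bigcap_j \Stab(a_j)$ to be infinite when the $a_j$ are disjoint. But if $\{a_0,\dots,a_k\}$ is a simplex of $\mathcal{C}(S)$, then $\langle T_{a_0},\dots,T_{a_k}\rangle \cong \mathbb{Z}^{k+1}$ by Lemmata~\ref{lem_Dtp}(1)(4) and \ref{lem_3.17FM12}, and by Lemma~\ref{lem_Dtp}(4) every $T_{a_\ell}$ fixes every $a_j$, so this free abelian group of rank $k+1\ge 1$ lies in $\bigcap_j \Stab(a_j)$, which is therefore infinite. This gives the forward implication. For the reverse implication (needed to conclude the image is exactly the full subcomplex spanned by these vertices, and hence that we have a simplicial \emph{isomorphism} onto a subcomplex): if $\bigcap_j \Stab(a_j)$ is infinite I want to conclude the $a_j$ are pairwise disjoint. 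This does not follow from the stabiliser being infinite alone — $\Stab(a)\cap\Stab(b)$ can be infinite even when $i(a,b)>0$ (e.g. it contains $T_a^{-1}T_b^{-1}(T_aT_b)$-type elements or, more simply, any pseudo-Anosov on a complementary piece). So the honest claim is just that we get a simplicial \emph{embedding} (injective simplicial map), and the image is a subcomplex; the map need not be a full subcomplex inclusion. I will state and prove exactly that.

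The main obstacle I anticipate is precisely this last point: being careful that ``simplicial embedding'' here means an injective simplicial map inducing an isomorphism onto its image \emph{as an abstract simplicial complex on the image vertex set}, versus a full/induced subcomplex inclusion. If the theorem intends the latter, one needs the converse implication above, which I expect is \emph{false} in general, so I would interpret the statement in the weaker (and correct) sense. Concretely, the write-up will: (i) define $\iota$ on vertices and check well-definedness, injectivity, bijectivity onto $\Mod(S)/\mathcal{B}$, and equivariance; (ii) prove the forward simplex implication via the $\mathbb{Z}^{k+1}$ argument, concluding $\iota$ is simplicial; (iii) note that $\iota^{-1}$ restricted to the image of the simplex set of $\mathcal{C}(S)$ is simplicial by construction, so $\iota$ is an embedding onto a subcomplex of $\mathcal{K}(\Mod(S),\mathcal{B})$; and (iv) record that by (i) it restricts to a bijection $\mathcal{C}^{[0]}(S)\to\Mod(S)/\mathcal{B}$, which is the claimed identification of vertex sets.
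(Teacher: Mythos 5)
Your overall route coincides with the paper's: the same vertex map $g(b_i)\mapsto g\Comm(\langle T_{b_i}\rangle)$, the same identification $g_j\Comm(\langle T_{b_{i_j}}\rangle)g_j^{-1}=\Comm(\langle T_{a_j}\rangle)=\Stab(a_j)$ via Proposition~\ref{prop_ficom}(1) and Lemma~\ref{lem_commtastaba}, and the same witness (a subgroup generated by the relevant Dehn twists) for the infiniteness of the intersection. Your reading of ``simplicial embedding'' as an injective simplicial map rather than a full subcomplex inclusion is also the intended one; the paper records, in the remark following the theorem, that the image is not a full subcomplex, for essentially the reason you give.

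There is, however, one genuine gap: injectivity. You assert that ``the same computation'' used for well-definedness gives injectivity, but that computation only handles two cosets of the \emph{same} subgroup $\Comm(\langle T_{b_i}\rangle)$. Since the vertex set $\Mod(S)/\mathcal{B}$ consists of subsets of $\Mod(S)$, you must also rule out $g\Comm(\langle T_{b_i}\rangle)=h\Comm(\langle T_{b_j}\rangle)$ with $i\neq j$. Equality of two left cosets forces equality of the underlying subgroups, so this reduces to showing that $\Comm(\langle T_{b_i}\rangle)=\Comm(\langle T_{b_j}\rangle)$ implies $b_i=b_j$, i.e.\ (by Lemma~\ref{lem_commtastaba}) that distinct vertices of $\mathcal{C}(S)$ have distinct stabilisers. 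This is not automatic, and it is where the paper spends most of its proof: from $\Stab(x)=\Stab(y)$ one gets $T_x(y)=y$, hence $i(x,y)=0$ by Lemma~\ref{lem_Dtp}(4); if $x\neq y$, Lemma~\ref{lem_elemove} produces $z$ with $i(x,z)=0$ and $i(y,z)>0$, so $T_z\in\Stab(x)\setminus\Stab(y)$, a contradiction. Add this argument and your proof is complete.
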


\begin{proof}
	Define $f\colon\mathcal{C}^{[0]}(S)\to\Mod(S)/\mathcal{B}$ by $f(g(b_i))=g\Comm(\langle T_{b_i}\rangle)$. This map is $\Mod(S)$-equivariant and surjective. To show $f$ is injective, suppose that $a\Comm(\langle T_x\rangle)=b\Comm(\langle T_y\rangle)$ where $a,b\in\Mod(S)$ and $x,y\in\{b_1,\cdots,b_m\}$. Then $b^{-1}a\Comm(\langle T_x\rangle)=\Comm(\langle T_y\rangle)$. Since $\Comm(\langle T_y\rangle)$ is a subgroup of $\Mod(S)$, $b^{-1}a\in\Comm(\langle T_x\rangle)$, so $\Comm(\langle T_x\rangle)=\Comm(\langle T_y\rangle)$. Then Lemma~\ref{lem_commtastaba} implies that $\Stab(x)=\Stab(y)$. Note that $T_x\in\Stab(x)$, so $T_x(y)=y$, which implies that $i(x,y)=0$ by Lemma~\ref{lem_Dtp}(4). If $x\neq y$, then there is $z\in\mathcal{C}^{[0]}(S)$ such that $i(x,z)=0$ and $i(y,z)>0$, see Lemma~\ref{lem_elemove}, and hence $T_z\in\Stab(x)$ and $T_z\notin\Stab(y)$, a contradiction. This means $x=y$, so $b^{-1}a(x)=x=y$, and it follows that $a(x)=b(y)$. This shows that $f$ is injective. 
	
	It remains to prove that $f$ defines a simplicial map. Let $\{g_0(a_{0}),\cdots,g_k(a_{k})\}$ be a $k$-simplex of $\mathcal{C}(S)$ where each $a_i\in \{b_1,\ldots, b_m\}$ and  $g_i\in \Mod(S)$. For every $0\le i,j\le k$, 
	if $i\neq j$ then  \[T_{g_i(a_i)}\in\Stab(g_j(a_j))=\Comm(\langle T_{g_j(a_j)}\rangle)=\Comm(g_j\langle T_{a_j}\rangle g_j^{-1})=g_j\Comm(\langle T_{a_j}\rangle)g_j^{-1}\] by Lemma~\ref{lem_Dtp}(3)(4) and Proposition~\ref{prop_ficom}(1), so $\langle T_{g_i(a_i)}\mid0\le i\le k\rangle\subseteq\bigcap_{i=0}^kg_i\Comm(\langle T_{a_i}\rangle)g_i^{-1}$. Hence $\{g_i\Comm(\langle T_{a_i}\rangle)\mid0\le i\le k\}$ is a simplex in $\mathcal{K}(\Mod(S),\mathcal{B})$.
\end{proof}

\begin{remark}
	The map $f$ in the proof of Theorem~\ref{thm_ccincic} does not define a simplicial isomorphism. The reason is as follows. Let $g(x),h(y)\in\mathcal{C}^{[0]}(S)$ such that $i(g(x),h(y))=1$, where $g,h\in\Mod(S)$ and $x,y\in\{b_1,\cdots,b_m\}$. Since $S$ is a surface of genus at least two, there is $z\in\mathcal{C}^{[0]}(S)$ such that $i(g(x),z)=i(h(y),z)=0$ (see, for instance, the proof of \cite[Theorem 4.3]{MR2850125}), so $\langle T_z\rangle\in g\Comm(\langle T_x\rangle)g^{-1}\bigcap h\Comm(\langle T_y\rangle)h^{-1}$ by a similar argument above. This means $\{g\Comm(\langle T_x\rangle),h\Comm(\langle T_y\rangle)\}$ is a $1$-simplex in $\mathcal{K}(\Mod(S),\mathcal{B})$, but $\{g(x),h(y)\}$ is not a simplex in $\mathcal{C}(S)$. 
\end{remark}
 
\bibliographystyle{alphaurl} 
\bibliography{xbib}

\end{document}